\newcommand{\BR}{\mathbb{R}}
\newcommand{\SL}{\sum\limits}
\newcommand{\al}{\alpha}
\newcommand{\ga}{\gamma}
\newcommand{\de}{\delta}
\newcommand{\ME}{\mathbf E}
\newcommand{\CF}{\mathcal F}
\newcommand{\CG}{\mathcal G}
\newcommand{\MP}{\mathbf P}
\newcommand{\CL}{\mathcal L}
\newcommand{\CS}{\mathcal S}
\newcommand{\CZ}{\mathcal Z}
\newcommand{\CX}{\mathcal X}
\newcommand{\CY}{\mathcal Y}
\newcommand{\CP}{\mathcal P}
\newcommand{\Oa}{\Omega}
\newcommand{\si}{\sigma}
\newcommand{\pa}{\partial}
\renewcommand{\phi}{\varphi}
\newcommand{\ta}{\theta}
\newcommand{\eps}{\varepsilon}
\newcommand{\Ra}{\Rightarrow}
\newcommand{\Lra}{\Leftrightarrow}
\newcommand{\ol}{\overline}
\newcommand{\CM}{\mathcal M}
\newcommand{\norm}[1]{\lVert#1\rVert}
\renewcommand{\comment}[1]{}
\newcommand{\btu}{\bigtriangleup}
\newcommand{\mP}{\mathbf{p}}
\newcommand{\dd}{\mathrm{d}}
\newcommand{\md}{\mathrm{d}}
\newcommand{\CV}{\mathcal V}
\DeclareMathOperator{\SRBM}{SRBM}
\DeclareMathOperator{\diag}{diag}
\DeclareMathOperator{\tr}{tr}
\DeclareMathOperator{\CBP}{CBP}
\begin{document}

\theoremstyle{plain}
\newtheorem{thm}{Theorem}[section]
\newtheorem*{thmnonumber}{Theorem}
\newtheorem{lemma}[thm]{Lemma}
\newtheorem{prop}[thm]{Proposition}
\newtheorem{cor}[thm]{Corollary}
\newtheorem{open}[thm]{Open Problem}

\theoremstyle{definition}
\newtheorem{defn}[thm]{Definition}
\newtheorem{asmp}[thm]{Assumption}
\newtheorem{notn}[thm]{Notation}
\newtheorem{prb}[thm]{Problem}

\theoremstyle{remark}
\newtheorem{rmk}[thm]{Remark}
\newtheorem{exm}[thm]{Example}
\newtheorem{clm}[thm]{Claim}

\large

\author{Andrey Sarantsev}

\title[Triple and Simultaneous Collisions of Competing Brownian Particles]{Triple and Simultaneous Collisions\\ of Competing Brownian Particles} 

\address{University of Washington, Department of Mathematics, Box 354350, Seattle, WA 98195-4350}

\email{ansa1989@math.washington.edu}

\date{January 29, 2015. Version 8}

\keywords{Reflected Brownian motion, competing Brownian particles, asymmetric collisions, named particles, ranked particles, triple collisions, simultaneous collisions, Skorohod problem, stochastic comparison, non-smooth parts of the boundary}

\subjclass[2010]{Primary 60K35, secondary 60J60, 60J65, 60H10, 91B26}

\begin{abstract}
Consider a finite system of competing Brownian particles on the real line. Each particle moves as a Brownian motion, with drift and diffusion coefficients depending only on its current rank relative to the other particles. A triple collision occurs if three particles are at the same position at the same moment. A simultaneous collision occurs if at a certain moment, there are two distinct pairs of particles such that in each pair, both particles occupy the same position. These two pairs of particles can overlap, so a triple collision is a particular case of a simultaneous collision. We find a necessary and sufficient condition for a.s. absense of triple and simultaneous collisions, continuing the work of Ichiba, Karatzas, Shkolnikov (2013). Our results are also valid for the case of asymmetric collisions, when the local time of collision between the particles is split unevenly between them; these systems were introduced in Karatzas, Pal, Shkolnikov (2012). 
\end{abstract}

\maketitle

\section{Introduction} 

This paper is devoted to finite systems of competing Brownian particles. First, let us informally describe these systems. Fix $N \ge 2$, the quantity of particles. Take real-valued parameters $g_1, \ldots, g_N$ and positive real-valued parameters $\si_1, \ldots, \si_N$. Consider $N$ particles, moving on the real line. At each time, rank them from the left to the right: the  particle which is currently the leftmost one has rank $1$, the second leftmost particle has rank $2$, etc., up to the rightmost particle, which has rank $N$. As particles move, they can exchange ranks. We shall explain below how to resolve ties between particles. The particles move according to the following law: for each $k = 1, \ldots, N$, the particle with (current) rank $k$ moves as a Brownian motion with drift coefficient $g_k$ and diffusion coefficient $\si_k^2$, for each $k = 1, \ldots, N$. Thus, the dynamics of each particle depends on its current rank among other particles.  

Let us now formally define these systems. Consider the standard setting: a filtered probability space $(\Oa, \CF, (\CF_t)_{t \ge 0}, \MP)$ with the filtration satisfying the usual conditions. 

Any one-dimensional Brownian motion with zero drift and unit diffusion coefficients starting from the origin is called a {\it standard Brownian motion}. The symbol $a'$ denotes the transpose of (a vector or a matrix) $a$. We write $1(C)$ for the indicator of the event $C$. 

For every vector $x  = (x_1, \ldots, x_N)' \in \BR^N$, let $\mP$ be the permutation on $\{1,\ldots, N\}$ with the following properties:

(i) it {\it orders the components of} $x$: $x_{\mP(i)} \le x_{\mP(j)}$ for $1 \le i \le j \le N$;

(ii) {\it ties are resolved in the lexicographic order}: if $1 \le i < j \le N$ and $x_{\mP(i)} = x_{\mP(j)}$, then $\mP(i) < \mP(j)$. 

There exists a unique permutation $\mP = \mP_x$ which satisfies these two properties. We shall call it {\it the ranking permutation} for the vector $x$.
For example, if $x = (1, -1, 0, 0)'$, then $\mP_x(1) = 2,\ \mP_x(2) = 3,\ \mP_x(3) = 4,\ \mP_x(4) = 1$. We write $x_{(i)} = x_{\mP_x(i)}$ for $i = 1, \ldots, N$, so that $x_{(1)} \le x_{(2)} \le \ldots \le x_{(N)}$ are the ranked components of the vector $x$. 

\begin{defn} Take i.i.d. standard $(\CF_t)_{t \ge 0}$-Brownian motions $W_1, \ldots, W_N$. For a continuous $\BR^N$-valued process 
$$
X = (X(t),\ t \ge 0),\ \ \ X(t) = (X_1(t), \ldots, X_N(t))',
$$
denote by $\mP_t \equiv \mP_{X(t)}$ the ranking permutation for the vector $X(t)$ for every $t \ge 0$. Suppose this process satisfies the following SDE:
\begin{equation}
\label{mainSDE}
dX_i(t) = \SL_{k=1}^N1(\mP_t(k) = i)\left[g_k\, \md t + \si_k\, \md W_i(t)\right],\ \ i = 1, \ldots, N.
\end{equation}
Then this process $X$ is called a {\it classical system of $N$ competing Brownian particles} with {\it drift coefficients} $g_1, \ldots, g_N$ and {\it diffusion coefficients} $\si_1^2, \ldots, \si_N^2$. For $k = 1, \ldots, N$, the process
$$
Y_k = (Y_k(t),\ t \ge 0),\ \ Y_k(t) := X_{\mP_t(k)}(t) \equiv X_{(k)}(t),
$$
is called the {\it $k$th ranked particle}. If $\mP_t(k) = i$, then we say that the particle $X_i(t) = Y_k(t)$ at time $t$ has {\it name} $i$ and {\it rank} $k$. 
\label{classical}
\end{defn}

These systems were introduced in the paper \cite{BFK2005} for financial modeling; on this topic, see subsection 1.5. The coefficients of the SDE~\eqref{mainSDE} are piecewise constant functions of $X_1(t), \ldots, X_N(t)$, so weak existence and uniqueness in law for such systems follow from \cite{Bass1987}. By definition, the ranked particles satisfy 
$$
Y_1(t) \le Y_2(t) \le \ldots \le Y_N(t).
$$

\begin{defn}
A {\it triple collision at time $t$} occurs if there exists a rank $k = 2, \ldots, N-1$ such that  $Y_{k-1}(t) = Y_{k}(t) = Y_{k+1}(t)$.
\label{d1}
\end{defn}

A triple collision is sometimes an undesirable phenomenon. For example,  existence and uniqueness of a strong solutions of the SDE~\eqref{mainSDE} has been proved only up to the first moment of a triple collision, see \cite[Theorem 2]{IKS2013}. In this paper, we give a necessary and sufficient condition for absence of triple collisions with probability one. First, let us define some related concepts. 

\begin{defn} A {\it simultaneous collision} at time $t$ occurs if there are ranks $k \ne l$ such that such that $Y_{k}(t) = Y_{k+1}(t),\ Y_{l}(t) = Y_{l+1}(t)$. 
\label{d2}
\end{defn}

A triple collision is a particular case of a simultaneous collision.

The main result of this article is as follows. 

\begin{thm} Consider a system from Definition~\ref{classical}. 

(i) Suppose the sequence $(\si_n^2)_{1 \le n \le N}$ is {\it concave}, that is,  
\begin{equation}
\label{maincond}
\si_{k+1}^2 - \si_k^2 \le \si_k^2 - \si_{k-1}^2,\ \ k = 2, \ldots, N-1.
\end{equation}
Then, with probability one, there are no triple and no simultaneous collisions at any time $t > 0$.

(ii) If the condition~\eqref{maincond} fails for a certain $k = 2, \ldots, N-1$, then with positive probability there exists a moment $t > 0$ such that there is a triple collision between particles with ranks $k-1$, $k$, and $k+1$ at time $t$. 
\label{classicalthm}
\end{thm}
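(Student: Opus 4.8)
The plan is to translate the problem about the named particles into a boundary-hitting problem for the gap process and then to read off the answer from the local geometry of reflection at each edge of the orthant.

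First I would pass to the ranked particles $Y_1\le\cdots\le Y_N$ and the gaps $Z_k:=Y_{k+1}-Y_k\ge 0$ for $k=1,\dots,N-1$. The ranked particles are driven by independent standard Brownian motions, and taking differences of their Skorokhod decompositions shows that $Z=(Z_1,\dots,Z_{N-1})'$ is a semimartingale reflected Brownian motion in the orthant $\{z\ge 0\}$ with a constant drift, a tridiagonal covariance matrix $A$ with $A_{kk}=\si_k^2+\si_{k+1}^2$ and $A_{k,k+1}=A_{k+1,k}=-\si_{k+1}^2$, and a tridiagonal reflection matrix $R$ with unit diagonal and entries $-\tfrac12$ on the two off-diagonals. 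The key dictionary is then: a triple collision among ranks $k-1,k,k+1$ at time $t$ is exactly the event $Z_{k-1}(t)=Z_k(t)=0$ (hitting the edge $F_k:=\{z_{k-1}=z_k=0\}$), while a simultaneous collision that is not a triple collision is the event $Z_k(t)=Z_l(t)=0$ for some $|k-l|\ge 2$. Thus everything reduces to deciding which codimension-$2$ faces of the orthant the process $Z$ can reach.

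Next I would dispose of the non-adjacent simultaneous collisions, which turn out never to occur regardless of the coefficients. For $|k-l|\ge 2$, as long as the intervening gaps stay strictly positive the coordinates $Z_k$ and $Z_l$ are driven by the disjoint noise families $(B_k,B_{k+1})$ and $(B_l,B_{l+1})$ and their boundary local times do not interact, so locally they are independent one-dimensional reflected diffusions. The zero set of such a diffusion is a regenerative set of index $1/2$, and the zero sets of two independent ones have dimensions summing to the critical value $1$, hence they a.s.\ do not intersect. Consequently $\{Z_k=0\}$ and $\{Z_l=0\}$ share no time point and non-triple simultaneous collisions are excluded.

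The heart of the argument is the local analysis at a single edge $F_k$. I would localize by stopping $Z$ as soon as any gap other than $Z_{k-1},Z_k$ becomes small, so that only the two faces $\{z_{k-1}=0\}$ and $\{z_k=0\}$ are active; there $(Z_{k-1},Z_k)$ evolves as a planar SRBM in a quadrant with covariance $A^{(k)}$ given by $A^{(k)}_{11}=\si_{k-1}^2+\si_k^2$, $A^{(k)}_{22}=\si_k^2+\si_{k+1}^2$, $A^{(k)}_{12}=A^{(k)}_{21}=-\si_k^2$, and reflection directions $(1,-\tfrac12)'$ and $(-\tfrac12,1)'$. A linear change of variables reducing $A^{(k)}$ to the identity maps the quadrant to a wedge of some opening $\xi_k\in(0,\pi)$ on whose two sides the reflection fields make angles $\theta_1,\theta_2$ with the inward normals, and by the Varadhan--Williams classification of reflected Brownian motion in a wedge the vertex is a.s.\ avoided when the parameter $\alpha_k:=(\theta_1+\theta_2)/\xi_k\le 0$ and is reached with positive probability when $\alpha_k>0$. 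A direct computation gives $\sign\alpha_k=\sign(\si_{k-1}^2-2\si_k^2+\si_{k+1}^2)$, so that $\alpha_k\le 0$ is exactly the concavity inequality~\eqref{maincond} at $k$.

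Assembling: for (i), concavity forces $\alpha_k\le 0$ for every $k$, so no edge $F_k$ is reached and, together with the previous step, neither triple nor simultaneous collisions occur; for (ii), if~\eqref{maincond} fails at $k$ then $\alpha_k>0$, and starting from a configuration in which ranks $k-1,k,k+1$ are clustered while all other gaps are large, the strong Markov property and the planar reachability result give, with positive probability, a triple collision at ranks $k-1,k,k+1$. I expect the main obstacle to be making the localization in the previous paragraph rigorous: the Varadhan--Williams theorem describes a pure wedge, whereas $Z$ lives in the full orthant and could approach other faces on its way toward $F_k$, so one must show that reachability of $F_k$ is genuinely a local property of the data $(A,R)$ at that edge --- unaffected by the drift, which is lower order near the vertex, and by the remaining coordinates. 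This is where I anticipate needing a stochastic-comparison argument trapping the relevant planar sub-dynamics between wedge processes, together with careful control of the exit time from a neighborhood of the target edge.
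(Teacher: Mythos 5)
Your reduction to the gap process, the dictionary between codimension-two edges of the orthant and collisions, and the planar wedge computation for part (ii) all match the paper; in particular the identity $\sign(\theta_1+\theta_2)=\sign\bigl(\si_{k-1}^2-2\si_k^2+\si_{k+1}^2\bigr)$ is exactly what the paper extracts from~\eqref{ta1}--\eqref{ta2} and Proposition~\ref{wedge}. The serious gap is in the avoidance direction of part (i). Your plan rests on a localization you yourself flag as the obstacle, and that obstacle is not a technicality: the Varadhan--Williams theorem classifies hitting of the corner of an isolated planar wedge, and there is no soft reduction of ``$Z$ never reaches $\{z_{k-1}=z_k=0\}$ inside the full orthant'' to that statement, because $Z$ can approach this edge while simultaneously interacting with other faces (including faces of higher codimension), so one would need an induction over all subsets of active faces or a genuinely global device. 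The paper does not localize for part (i) at all. It (a) invokes Williams' global theorem (Proposition~\ref{generalnonsmooth}) that a reflected Brownian motion in a convex polyhedron satisfying the \emph{exact} skew-symmetry condition~\eqref{SSgeneral} avoids all non-smooth parts of the boundary in every dimension, (b) checks in Lemma~\ref{equiv} that under $z\mapsto A^{-1/2}z$ this is the condition $r_{ij}a_{jj}+r_{ji}a_{ii}=2a_{ij}$, and (c) handles the inequality~\eqref{SSineq} by building a second reflection matrix $\tilde R$ for which equality holds exactly and comparing the two SRBMs componentwise via Proposition~\ref{compRR'}. This comparison with an exactly skew-symmetric system is the key idea your proposal is missing; without it (or an equivalent global argument) part (i) is not proved.

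A second, smaller problem is your unconditional claim that non-adjacent simultaneous collisions ``never occur regardless of the coefficients,'' argued by dimension-counting on zero sets. This is both stronger than the theorem requires and not established by the argument given: $Z_k$ and $Z_l$ are not independent reflected diffusions (they are coupled through the chain of local times; for $l=k+2$ both involve $L_{(k+1,k+2)}$), and even for genuinely independent regenerative sets the heuristic ``dimensions sum to the critical value'' does not by itself yield almost sure empty intersection. The paper sidesteps this: under~\eqref{maincond} the condition~\eqref{SSineq} holds automatically for $|i-j|\ge2$ (there $r_{ij}=r_{ji}=a_{ij}=0$), so Theorem~\ref{SRBMnonsmooth}(i) excludes all edges simultaneously, and the unconditional statement about non-adjacent collisions is deferred to the companion paper. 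For part (ii) your outline is essentially correct, but note that the paper replaces your localization by the cleaner comparison of Corollary~\ref{cutting}: $[Z]_{\{k-1,k\}}$ is pathwise dominated by a genuine two-dimensional SRBM with the restricted data, so when the latter reaches the corner the former is forced to as well, with no control of exit times required.
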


The proof of this result is given in Section 4. We can state a remarkable corollary of this theorem. 

\begin{cor} Take a system from Definition~\ref{classical}. Suppose a.s. there are no triple collisions at any moment $t > 0$. Then a.s. there are no simultaneous collisions at any moment $t > 0$. 
\label{interesting}
\end{cor}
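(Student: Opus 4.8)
The plan is to derive this corollary purely as a logical consequence of the two parts of Theorem~\ref{classicalthm}, with no new probabilistic analysis required. The key observation is that part (ii) of the theorem supplies exactly the converse direction needed to upgrade the hypothesis of the corollary into the concavity condition~\eqref{maincond}, after which part (i) delivers the conclusion.

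First I would argue by contraposition using part (ii). Suppose, toward establishing concavity, that~\eqref{maincond} fails for some $k \in \{2, \ldots, N-1\}$. Then part (ii) of Theorem~\ref{classicalthm} guarantees that with positive probability there is a moment $t > 0$ at which the particles of ranks $k-1$, $k$, and $k+1$ occupy the same position, i.e.\ a triple collision occurs with positive probability. This contradicts the standing hypothesis that a.s.\ there are no triple collisions at any time. The only point to check is that the event produced in part (ii)---a triple collision among three specified consecutive ranks---is a sub-event of ``some triple collision occurs,'' so its positive probability indeed contradicts the a.s.\ absence of all triple collisions; this is immediate from Definition~\ref{d1}. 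Hence~\eqref{maincond} must hold for every $k = 2, \ldots, N-1$.

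Second, with concavity now in hand, I would simply invoke part (i) of Theorem~\ref{classicalthm}, which asserts that under~\eqref{maincond} there are, with probability one, no triple \emph{and} no simultaneous collisions at any time $t > 0$; in particular a.s.\ there are no simultaneous collisions, which is the desired conclusion. There is essentially no hard step: the entire content is already packaged inside Theorem~\ref{classicalthm}, and the corollary merely records that its two halves dovetail. The single thing to be careful about is the direction of the two implications---part (ii) is applied in contrapositive form to pass from ``a.s.\ no triple collisions'' to concavity, while part (i) is applied in the forward direction to pass from concavity to ``a.s.\ no simultaneous collisions.'' One could also note that the same reasoning shows the three statements ``\eqref{maincond} holds,'' ``a.s.\ no triple collisions,'' and ``a.s.\ no simultaneous collisions'' are mutually equivalent.
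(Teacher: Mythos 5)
Your argument is correct and is exactly the paper's intended derivation: the paper presents Corollary~\ref{interesting} as an immediate consequence of Theorem~\ref{classicalthm}, using part (ii) in contrapositive form to obtain the concavity condition~\eqref{maincond} from the absence of triple collisions, and then part (i) to conclude the absence of simultaneous collisions (this is spelled out informally in the remark at the end of Section 4). Your additional observation that the three statements are mutually equivalent is also consistent with the paper's discussion.
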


It is interesting that a system of $N = 4$ particles can have a.s. no simultaneous collisions of the form 
\begin{equation}
\label{1010}
Y_1(t) = Y_2(t),\ \ Y_3(t) = Y_4(t),
\end{equation}
and at the same time triple collisions with positive probability. For example, if you take 
$$
\si_1 = \si_4 = 1,\ \ \mbox{and}\ \ \si_2 = \si_3 = 1 - \eps\ \ \mbox{for sufficiently small}\ \ \eps > 0,
$$
then there are a.s. no simultaneous collisions of the form~\eqref{1010}, but with positive probability there is a triple collision of ranked particles $Y_1$, $Y_2$, and $Y_3$, and with positive probability there is a triple collision of ranked particles $Y_2$, $Y_3$, and $Y_4$.  If 
$$
\si_1 = \si_3 = 1,\ \ \mbox{and}\ \ \si_2 = \si_4 = 1 + \eps\ \ \mbox{for sufficiently small}\ \ \eps > 0,
$$
then there are a.s. no simultaneous collisions of the form~\eqref{1010}, and a.s. no triple collisions of ranked particles $Y_1$, $Y_2$, and $Y_3$, but with positive probability there is a triple collision of ranked particles $Y_2$, $Y_3$, and $Y_4$. This is shown in the companion paper \cite[Subsection 1.2]{MyOwn5}. 

\subsection{Collision local times}

Consider a system of competing Brownian particles from Definition~\ref{classical}. Define the processes $B_1 = (B_1(t), t \ge 0), \ldots, B_N = (B_N(t), t \ge 0)$ as follows:
$$
B_k(t) = \SL_{i=1}^N\int_0^t1(\mP_s(k) = i)\md W_i(s).
$$
One can calculate that $\langle B_i, B_j\rangle_t = \de_{ij}t$; therefore, these are i.i.d. standard Brownian motions. For $k = 2, \ldots, N$, let the process $L_{(k-1, k)} = (L_{(k-1, k)}(t),\ t \ge 0)$ be the semimartingale local time at zero of the nonnegative semimartingale $Y_k - Y_{k-1}$. For notational convenience, we let $L_{(0, 1)}(t) \equiv 0$ and $L_{(N, N+1)}(t) \equiv 0$. Then the ranked particles $Y_1, \ldots, Y_N$ satisfy the following dynamics:
\begin{equation}
\label{symmSDE}
Y_k(t) = Y_k(0) + g_kt + \si_kB_k(t) + \frac12L_{(k-1, k)}(t) - \frac12L_{(k, k+1)}(t),\ \ k = 1, \ldots, N.
\end{equation}
The equation~\eqref{symmSDE} was deduced in \cite[Lemma 1]{Ichiba11} and \cite[Theorem 2.5]{BG2008}; see also \cite[Section 3]{BFK2005} and \cite[Chapter 3]{IchibaThesis}. 

The process $L_{(k-1, k)}$ is called the {\it local time of collision between the particles $Y_{k-1}$ and $Y_k$}. One can regard the local time $L_{(k-1, k)}(t)$ to be the total amount of push between the $(k-1)$st and the $k$th ranked particles $Y_{k-1}$ and $Y_k$ accumulated by time $t$. This amount of push is necessary and sufficient to keep the particle $Y_k$ to the right of the particle $Y_{k-1}$, so that $Y_{k-1}(t) \le Y_k(t)$. 

When these two particles collide, the amount of push is split evenly between them: the amount $(1/2)L_{(k-1, k)}(t)$ goes to the right-sided particle $Y_k$ and pushes it to the right; the equal amount $(1/2)L_{(k-1, k)}(t)$ (with the minus sign) goes to the left-sided particle $Y_{k-1}$ and pushes it to the left. One possible physical interpretation of this phenomenon: the ranked particles  have the same mass; so, when they collide, they get the same amount of push. 

The local time process $L_{(k-1, k)}$ has the following properties: $L_{(k-1, k)}(0) = 0$, $L_{(k-1, k)}$ is nondecreasing, and it can increase only when $Y_{k-1}(t) = Y_k(t)$, that is, when particles with ranks $k-1$ and $k$ collide. We can formally write the last property as
\begin{equation}
\label{localtime}
\int_0^{\infty}1(Y_{k}(t) \ne Y_{k-1}(t))\md L_{(k-1, k)}(t) = 0.
\end{equation}

\subsection{Systems with asymmetric collisions}

If we change coefficients $1/2$ in~\eqref{symmSDE} to some other values, we get the model from the paper \cite{KPS2012}. The local times in this new model are split unevenly between the two colliding particles, as if they had different mass. Let us now formally define this model. First, let us describe its parameters. Let $N \ge 2$ be the quantity of particles. Fix real numbers $g_1, \ldots, g_N$ and positive real numbers $\si_1, \ldots, \si_N$, as before. In addition, fix real numbers $q^+_1$, $q^-_1, \ldots, q^+_N$, $q^-_N$, satisfying the following conditions:
$$
q^+_{k+1} + q^-_k = 1,\ \ k = 1, \ldots, N-1;\ \ 0 < q^{\pm}_k < 1,\ \ k = 1, \ldots, N.
$$

\begin{defn} Take i.i.d. standard $(\CF_t)_{t \ge 0}$-Brownian motions $B_1, \ldots, B_N$. Consider a continuous adapted $\BR^N$-valued process 
$$
Y = (Y(t),\ t \ge 0),\ \ \ Y(t) = (Y_1(t), \ldots, Y_N(t))',
$$
and $N-1$ continuous adapted real-valued processes
$$
L_{(k-1, k)} = (L_{(k-1, k)}(t),\ t \ge 0),\ \ k = 2, \ldots, N,
$$
with the following properties:

(i) $Y_1(t) \le \ldots \le Y_N(t),\ \ t \ge 0$, 

(ii) the process $Y$ satisfies the following system of equations:
\begin{equation}
\label{SDEasymm}
Y_k(t) = Y_k(0) + g_kt + \si_kB_k(t) + q^+_kL_{(k-1, k)}(t) - q^-_kL_{(k, k+1)}(t),\ \ \ k = 1, \ldots, N.
\end{equation}
We let $L_{(0, 1)}(t) \equiv 0$ and $L_{(N, N+1)}(t) \equiv 0$ for notational convenience. 

(iii) for each $k = 2, \ldots, N$, the process $L_{(k-1, k)} = (L_{(k-1, k)}(t),\ t \ge 0)$ has the properties mentioned above: $L_{(k-1, k)}(0) = 0$, $L_{(k-1, k)}$ is nondecreasing and satisfies~\eqref{localtime}.  

Then the process $Y$ is called {\it a system of $N$ competing Brownian particles with asymmetric collisions}, with {\it drift coefficients} $g_1, \ldots, g_N$, {\it diffusion coefficients} $\si_1^2, \ldots, \si_N^2$, and {\it parameters of collision} $q^{\pm}_1,\ldots, q^{\pm}_N$. For each $k = 1, \ldots, N$, the process $Y_k = (Y_k(t), t \ge 0)$ is called the {\it $k$th ranked particle}. For $k = 2, \ldots, N$, the process $L_{(k-1, k)}$ is called the {\it local time of collision between the particles $Y_{k-1}$ and $Y_k$}. 
\label{asymmdefn}
\end{defn}

The state space of the process $Y$ is $\mathcal W^N := \{y = (y_1, \ldots, y_N)' \in \BR^N\mid y_1 \le y_2 \le \ldots \le y_N\}$. Strong existence and pathwise uniqueness for $Y$ and $L$ are proved in \cite[Section 2.1]{KPS2012}.  

\begin{rmk} Triple and simultaneous collisions for these systems are defined similarly to Definitions~\ref{d1} and~\ref{d2}. 
\end{rmk}

In the case of asymmetric collisions, we can also define a corresponding named system of competing Brownian particles. 

\begin{defn} Consider a continuous adapted process 
$$
X = (X(t), t \ge 0),\ X(t) = (X_1(t), \ldots, X_N(t))'.
$$
Suppose $\mP_t$ is the ranking permutation of $X(t)$ for $t \ge 0$, as before, and
$$
Y_k(t) \equiv X_{\mP_k(t)}(t),\ \ k = 1,\ldots, N,\ t \ge 0,
$$
Let $L_{(k-1, k)} = (L_{(k-1, k)}(t), t \ge 0)$ be the semimartingale local time at zero of $Y_k - Y_{k-1}$, for $k = 2, \ldots, N$; and $L_{(0, 1)}(t) \equiv L_{(N, N+1)}(t) \equiv 0$ for notational convenience, as before. 

Then this system $X = (X_1, \ldots, X_N)'$ is governed by the following SDE: for $i = 1, \ldots, N$ and $t \ge 0$, 
\begin{align*}
\md X_i(t) & = \SL_{k=1}^N1(\mP_t(k) = i)\left(g_k\md t + \si_k\md W_i(t)\right) \\ & + \SL_{k=1}^N1(\mP_t(k) = i)\left(q^-_k - (1/2)\right)\md L_{(k, k+1)}(t)  \\ & 
+ \SL_{k=1}^N1(\mP_t(k) = i)\left(q^+_k - (1/2)\right)\md L_{(k-1, k)}(t).
\end{align*}
It is called a {\it system of named competing Brownian particles} with {\it drift coefficients} $(g_n)_{1 \le n \le N}$, {\it diffusion coefficients} $(\si_n^2)_{1 \le n \le N}$, and {\it parameters of collision} $(q^{\pm}_n)_{1 \le n \le N}$.
\label{namedasymm}
\end{defn}

The ranked particles $(Y_1, \ldots, Y_N)$ from Definition~\ref{namedasymm} form a system of ranked competing Brownian particles in the sense of Definition~\ref{asymmdefn}. 
However, unlike the system $Y$ from Definition~\ref{asymmdefn}, which exists and is unique in a strong sense up to the infinite time horizon, the system $X$ from Definition~\ref{namedasymm} is known to have strong solutions only up to the first moment of a triple collision, see \cite{KPS2012}. 
This provides a motivation to find a condition which guarantees absense of triple collisions. Here, we prove a necessary and sufficient condition for a.s. lack of triple collisions. 

\begin{thm}
Consider a system of competing Brownian particles with asymmetric collisions from Definition~\ref{asymmdefn}. 

(i) Suppose the following condition is true: 
\begin{equation}
\label{conditionasymm}
(q^-_{k-1} + q^+_{k+1})\si_k^2 \ge q^-_k\si^2_{k+1}  + q^+_k\si_{k-1}^2,\ \ k = 2,\ldots, N-1.
\end{equation}
Then, with probability one, there are no triple and no simultaneous collisions at any time $t > 0$.

(ii) If the condition~\eqref{conditionasymm} is violated for some $k = 2, \ldots, N-1$, then with positive probability there exists a moment $t > 0$ such that there is a triple collision between particles with ranks $k-1,\ k$, and $k+1$ at time $t$. 
\label{thmasymm}
\end{thm}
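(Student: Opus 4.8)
The plan is to pass to the gap process and recast the existence of triple collisions as a boundary-hitting problem for a reflected Brownian motion. Define $Z_k(t) = Y_{k+1}(t) - Y_k(t)$ for $k = 1, \ldots, N-1$. Subtracting consecutive equations in \eqref{SDEasymm} and using the defining constraint $q^+_{k+1} + q^-_k = 1$, one checks that the coefficient of $L_{(k,k+1)}$ in the equation for $Z_k$ is exactly $1$, so that $Z = (Z_1, \ldots, Z_{N-1})'$ is a semimartingale reflected Brownian motion in the orthant $\BR^{N-1}_+$: it has constant drift, a tridiagonal covariance matrix $A$ with $A_{kk} = \si_k^2 + \si_{k-1}^2$ and $A_{k,k+1} = A_{k+1,k} = -\si_k^2$, and a tridiagonal reflection matrix $R$ with unit diagonal and off-diagonal entries $R_{k,k-1} = -q^+_k$, $R_{k,k+1} = -q^-_{k+1}$. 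A triple collision of ranks $k-1,k,k+1$ is precisely the event that $Z$ visits the edge $\{z_{k-1} = z_k = 0\}$, and a simultaneous collision is the event that $Z$ visits some edge $\{z_k = z_l = 0\}$ with $k \ne l$.

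The core is a two-dimensional reduction. Near the edge $\{z_{k-1} = z_k = 0\}$ the remaining gaps stay strictly positive, so their local times do not fire and the constant drift is a bounded perturbation; the pair $(Z_{k-1}, Z_k)$ then behaves like a driftless reflected Brownian motion in the quadrant with covariance $A_{2D} = \begin{pmatrix}\si_{k-1}^2 + \si_k^2 & -\si_k^2 \\ -\si_k^2 & \si_k^2 + \si_{k+1}^2\end{pmatrix}$ and reflection matrix $\begin{pmatrix} 1 & -q^-_k \\ -q^+_k & 1\end{pmatrix}$, whose columns are the reflection directions on the two faces. Whether this planar process reaches the corner is governed by the Varadhan--Williams theory of reflected Brownian motion in a wedge. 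Diagonalizing the covariance (equivalently, measuring angles in the metric $A_{2D}^{-1}$) turns the quadrant into a wedge of opening $\xi = \arccos(\si_k^2/\sqrt{ab})$, where $a = \si_{k-1}^2+\si_k^2$ and $b = \si_k^2 + \si_{k+1}^2$, and a short computation of the two reflection angles yields $\tan\theta_1 = (q^+_k a - \si_k^2)/\sqrt{ab - \si_k^4}$ and $\tan\theta_2 = (q^-_k b - \si_k^2)/\sqrt{ab - \si_k^4}$. The Varadhan--Williams parameter is $\alpha = (\theta_1 + \theta_2)/\xi$, and the corner is attainable from the interior with positive probability exactly when $\alpha > 0$. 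Since $\tan$ is increasing on $(-\pi/2, \pi/2)$, the inequality $\theta_1 + \theta_2 \le 0$ is equivalent to $\tan\theta_1 + \tan\theta_2 \le 0$, i.e. to $q^+_k a + q^-_k b \le 2\si_k^2$; rewriting $q^+_k = 1 - q^-_{k-1}$ and $q^-_k = 1 - q^+_{k+1}$ shows this is exactly \eqref{conditionasymm}. Thus $\alpha \le 0$ at $k$ iff \eqref{conditionasymm} holds at $k$, and the symmetric specialization $q^{\pm} \equiv 1/2$ recovers the concavity condition \eqref{maincond}.

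With the planar picture in place, part (ii) follows from the strong Markov property: when \eqref{conditionasymm} fails at $k$, with positive probability all other gaps remain bounded away from $0$ on a time interval during which the genuine planar process $(Z_{k-1}, Z_k)$ --- reached after a Girsanov removal of the constant drift --- hits its corner, producing a triple collision of ranks $k-1,k,k+1$. Part (i) is the no-hitting direction: when \eqref{conditionasymm} holds at every $k$ we have $\alpha \le 0$ for each adjacent edge, so no such edge is ever visited; and for a non-adjacent pair $|k - l| \ge 2$ the coordinates $Z_k, Z_l$ are locally uncoupled in both $A$ and $R$, so they behave like two independent one-dimensional reflected Brownian motions, whose zero sets have Hausdorff dimension $1/2$ and hence a.s. do not meet. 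This simultaneously yields the absence of simultaneous collisions and Corollary~\ref{interesting}.

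The main obstacle is making the localization rigorous: the Varadhan--Williams dichotomy is a statement about an exact, constant-coefficient, driftless planar process, whereas $(Z_{k-1}, Z_k)$ is only approximately such inside the full $(N-1)$-dimensional system. The delicate points are, first, controlling the contribution of the neighbouring local times $L_{(k-2,k-1)}$ and $L_{(k+1,k+2)}$ and of the drift near the edge so that they do not affect the corner-hitting behaviour, and second, transferring the planar positive-probability and null conclusions back to the original process via the strong Markov property and a support/Girsanov argument. I expect these steps, rather than the now purely algebraic matching of the hitting criterion with \eqref{conditionasymm}, to carry the real weight of the proof.
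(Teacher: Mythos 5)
Your reduction to the gap process, the $A^{-1/2}$ change of variables, the wedge-angle computations, and the algebraic matching of the Varadhan--Williams criterion with \eqref{conditionasymm} all agree with the paper (your $\tan$-formulas are equivalent to the paper's $\arcsin$-formulas, and the monotone-odd-function trick is the same). The genuine gap is in how you pass between the planar picture and the full $(N-1)$-dimensional gap process, and it is most serious in part (i). Your plan is to localize near the edge $\{z_{k-1}=z_k=0\}$ and treat $(Z_{k-1},Z_k)$ as an honest planar SRBM while the other gaps stay positive. But for an a.s.-for-all-time avoidance statement this localization does not close: the neighbouring local times $L_{(k-2,k-1)}$ and $L_{(k+1,k+2)}$ can charge every time interval (the zero set of a neighbouring gap has no isolated points), so there is no stopping time before which the pair is exactly the constant-coefficient planar process to which \cite[Theorem 2.2]{VW1985} applies; and the available stochastic comparison (Corollary~\ref{cutting}) gives $[Z]_{\{k-1,k\}} \preceq \SRBM^2([R]_I,[\mu]_I,[A]_I)$, which is the \emph{wrong direction} for avoidance --- the dominating planar process missing its corner says nothing about the dominated, nonnegative pair $[Z]_{\{k-1,k\}}$. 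The paper's part (i) is structurally different: it works in the full dimension, replaces $R$ by a matrix $\tilde R$ with $\tilde R \le R$ for which the skew-symmetry \emph{equality}~\eqref{SS} holds, invokes Williams' theorem \cite[Theorem 1.1]{Wil1987} for the transformed polyhedron in $\BR^{N-1}$, and then transfers avoidance to $Z$ by componentwise comparison. Your claim that non-adjacent pairs behave like \emph{independent} one-dimensional reflected Brownian motions is also false as stated ($Z_k$ and $Z_l$ are coupled through the intermediate local times); in the paper these edges are covered by the same full-dimensional argument, since $r_{ij}a_{jj}+r_{ji}a_{ii}\ge 2a_{ij}$ holds trivially when $|i-j|\ge 2$.

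For part (ii) your localization could in principle be carried out, but the paper sidesteps it entirely with the comparison in the useful direction: $[Z]_{\{k-1,k\}}$ is pathwise dominated by a genuine two-dimensional $\SRBM^2([R]_I,0,[A]_I)$ (after Girsanov removal of the drift and a change of probability space), and since both coordinates are nonnegative, the planar process hitting its corner forces $Z_{k-1}(t)=Z_k(t)=0$. This removes the need to control the neighbouring gaps at all. So the ``real weight'' you anticipate in the localization is, in the paper, carried instead by two comparison results (Proposition~\ref{compRR'} and Corollary~\ref{cutting}) and by Williams' full-dimensional skew-symmetry theorem; without one of these ingredients (or a genuinely new localization argument), part (i) of your proposal does not go through.
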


Note that Theorem~\ref{classicalthm} is a particular case of this theorem for $q^{\pm}_k = 1/2,\ k = 1, \ldots, N$. Corollary~\ref{interesting} is also true for systems with asymmetric collisions. 

\subsection{Method of proof: reduction to an SRBM in the orthant}

Let us informally describe a stochastic process called a {\it semimartingale reflected Brownian motion (SRBM) in the positive multidimensional orthant} $S := \BR^d_+$, where $\BR_+ := [0, \infty)$ and $d \ge 1$ is the dimension. We formally define an SRBM in subsection 2.1.

Fix the parameters of an SRBM: a {\it drift vector} $\mu \in \BR^d$, a {\it covariance matrix}: a $d\times d$-positive definite symmetric matrix $A = (a_{ij})_{1 \le i, j \le d}$, and a {\it reflection matrix}: a $d\times d$-matrix $R = (r_{ij})_{1 \le i, j \le d}$ with $r_{ii} = 1,\ i = 1, \ldots, d$. Then {\it an SRBM in the orthant $S$} with parameters $R, \mu, A$, denoted by $\SRBM^d(R, \mu, A)$, is a Markov process with state space $S$ which:

(i) behaves as a $d$-dimensional Brownian motion with drift vector $\mu$ and covariance matrix $A$ in the interior of the orthant $S$;

(ii) on each face $S_i = \{x \in S\mid x_i = 0\}$ of the boundary $\pa S$, the process is reflected in the direction of $r_i$, the $i$th column of $R$.

If $r_i = e_i$, where $e_i$ is the $i$th standard basis vector in $\BR^d$, then the reflection is called {\it normal}. Otherwise, it is called {\it oblique}. 

For a system of $N$ competing Brownian particles (the classical system or the one with asymmetric collisions), the gaps $Z_k(t) = Y_{k+1}(t) - Y_k(t),\ \ k = 1, \ldots, N-1$, between adjacent ranked particles form an SRBM in the orthant $\BR^{N-1}_+$: see subsection 2.2. If there is a simultaneous collision $Y_k(t) = Y_{k+1}(t)$ and $Y_l(t) = Y_{l+1}(t)$, then $Z_k(t) = Z_l(t) = 0$. In other words, a simultaneous collision is equivalent to the gap process hitting non-smooth parts of the boundary of the orthant $\BR^{N-1}_+$. In Theorem~\ref{SRBMnonsmooth}, subsection 2.3, we state a necessary and sufficient condition for an SRBM to a.s. avoid non-smooth parts of the boundary. This theorem is proved in Section 3. In Section 4, we translate these results into the language of competing Brownian particles, and prove Theorem~\ref{classicalthm} and Theorem~\ref{thmasymm}. 

We find whether this SRBM hits {\it non-smooth parts $S_i\cap S_j,\ i \ne j$ of the boundary} $\pa S$. This corresponds to triple or simultaneous collisions of competing Brownian particles. This connection is established in subsection 2.2.

\subsection{Relation to previous results}

For classical systems of competing Brownian particles from Definition~\ref{classical}, some significant partial results on the triple collision problem were known before. In particular, a necessary and sufficient condition for absence of triple collisions for systems with only three particles is obtained in \cite{IK2010}. In the article \cite{IKS2013}, it is proved that the condition~\eqref{maincond} from Theorem~\ref{classicalthm} is necessary. For systems with asymmetric collisions from \cite{KPS2012}, some sufficient conditions for absence of triple collisions were found, but these are not necessary conditions. 

In the companion paper \cite{MyOwn5}, we find a sufficient condition for avoiding collisions of four or more particles ({\it multiple collisions}), as well as {\it multicollisions}: when a few particles collide and at the same time other few particles collide. An example of a multicollision is $Y_1(t) = Y_2(t) = Y_3(t)$, $Y_5(t) = Y_6(t)$ and $Y_7(t) = Y_8(t)$. A simultaneous collision (for example, $Y_1(t) = Y_2(t)$ and $Y_3(t) = Y_4(t)$) is a particular case of a multicollision. In particular, as mentioned above, we can find examples of a system of four particles avoiding simultaneous collisions of the type~\eqref{1010} but having triple collisions with positive probability. 

We can also define a reflected Brownian motion in domains which are more general than the orthant. In particular, a {\it two-dimensional wedge} is a subset of $\BR^2$ of the form 
$$
\{(r\cos\ta, r\sin\ta)\mid 0 \le r < \infty,\ \ 0 \le \ta \le \xi\}
$$
(where $\xi \in (0, \pi)$ is the angle of this wedge). A reflected Brownian motion with unit drift vector and identity covariance matrix was studied in \cite{VW1985}. The latter paper provides a necessary and sufficient condition for this process to a.s. avoid hitting the origin (the corner of the wedge). In \cite{Wil1987b}, the Hausdorff dimension of the set of times when this process hits the corner was found. More generally, we can define a reflected Brownian motion in a convex polyhedron in $\BR^d$: see \cite{DK2003} and \cite{DW1995}. In \cite{Wil1987}, a reflected Brownian motion in a polyhedral domain was constructed under the so-called {\it skew-symmetry condition}, see~\eqref{SS} and~\eqref{SSgeneral}. It was shown that under this condition, it does not hit non-smooth parts of the boundary. These results are important and are applied in this article. 

Let us also mention some related sources on nonattainability of lower-dimensional manifolds by a diffusion process: the papers \cite{Friedman1974}, \cite{Ramasubramanian1983}, \cite{Ramasubramanian1988}, \cite{CepaLepingle}, and the book \cite{FriedmanBook}. 

\subsection{Motivation and historical review}

The original motivation to study classical systems of competing Brownian particles came from Stochastic Finance. An observed phenomenon of real-world stock markets is that stocks with smaller capitalizations have larger growth rates and larger volatilities. This can be captured by the classical model of competing Brownian particles: just let $g_1 > \ldots > g_N$ and $\si_1 > \ldots > \si_N$, and suppose that for $i = 1, \ldots, N$, the quantity $e^{X_i(t)}$ is the capitalization of the $i$th stock at time $t$. For financial applications and market models similar to this rank-based model, see the articles \cite{Ichiba11}, \cite{FIK2013b}, \cite{MyOwn4}, the book \cite[Chapter 5]{F2002} and a somewhat more recent survey \cite[Chapter 3]{FK2009}.

Classical systems from Definition~\ref{classical} were studied in \cite{IchibaThesis}, \cite{Ichiba11}, \cite{PP2008}, \cite{CP2010}, \cite{PS2010}, \cite{IPS2012}, \cite{FK2009}. There are several generalizations of these systems: 
\cite{S2011} (systems of competing Levy particles), \cite{PP2008}, \cite{IKS2013} (infinite systems of competing Brownian particles); \cite{FIK2013}, \cite{FIK2013b}, \cite{Ichiba11} ({\it second-order stock market models}, when drift and diffusion coefficients depend on both ranks and names).

Systems of competing Brownian particles with asymmetric collisions are related to the theory of exclusion processes: it was proved in \cite[Section 3]{KPS2012} that these systems are scaling limits of asymmetrically colliding random walks, which constitute a certain type of exclusion processes. In addition, thse systems are also related to random matrices and random surfaces evolving according to the KPZ equation, see \cite{FSW2013}.

Studying an SRBM in the orthant is motivated by queueing theory. An SRBM in the orthant is the {\it heavy traffic limit} for series of queues, when the traffic intensity at each queue tends to one, see \cite{ReimanThesis}, \cite{Rei1984}, \cite{Har1978}. We can also define an SRBM in general convex polyhedral domains in $\BR^d$, see \cite{DW1995}. An SRBM in the orthant and in convex polyhedra has been extensively studied, see the survey \cite{Wil1995}, and articles  \cite{HR1981a}, \cite{HR1981b}, \cite{HW1987a}, \cite{HW1987b}, \cite{Wil1987}, \cite{RW1988}, \cite{TW1993}, \cite{DW1994}, \cite{DK2003}, \cite{Chen1996}, \cite{Har1978}, \cite{BDH2010}, \cite{BL2007}, \cite{DW1995}, \cite{DH2012}, \cite{DH1992}, \cite{HH2009}, \cite{Wil1985a}, \cite{Wil1985b}, \cite{Wil1998b}, \cite{K1997}, \cite{K2000}, \cite{KW1996}, \cite{R2000}, \cite{KR2012a}, \cite{KR2012b}, \cite{KW1992a}, \cite{VW1985}, \cite{Wil1987b}.

\subsection{Notation} We denote by $I_k$ the $k\times k$-identity matrix. For a vector $x = (x_1, \ldots, x_d)' \in \BR^d$, let $\norm{x} := \left(x_1^2 + \ldots + x_d^2\right)^{1/2}$ be its Euclidean norm. 
For any two vectors $x, y \in \BR^d$, their dot product is denoted by $x\cdot y = x_1y_1 + \ldots + x_dy_d$. We compare vectors $x$ and $y$ componentwise: $x \le y$ if $x_i \le y_i$ for all $i = 1, \ldots, d$; $x < y$ if $x_i < y_i$ for all $i = 1, \ldots, d$; similarly for $x \ge y$ and $x > y$. We compare matrices of the same size componentwise, too. For example, we write $x \ge 0$ for $x \in \BR^d$ if $x_i \ge 0$ for $i = 1, \ldots, d$; $C = (c_{ij})_{1 \le i, j \le d} \ge 0$ if $c_{ij} \ge 0$ for all $i$, $j$. 

Fix $d \ge 1$, and let $I \subseteq \{1, \ldots, d\}$ be a nonempty subset. Write its elements in the order of increase: $I = \{i_1, \ldots, i_m\},\ \ 1 \le i_1 < i_2 < \ldots < i_m \le d$. For any $x \in \BR^d$, let
$[x]_I := (x_{i_1}, \ldots, x_{i_m})'$. For any $d\times d$-matrix $C = (c_{ij})_{1 \le i, j \le d}$, let $[C]_I := \left(c_{i_ki_l}\right)_{1 \le k, l \le m}$.

\section{Semimartingale Reflected Brownian Motion (SRBM) in the Orthant}

\subsection{Definition of an SRBM} Fix $d \ge 1$, the dimension. Recall that $\BR_+ := [0, \infty)$, and let $S := \BR^d_+$ be the $d$-dimensional positive orthant. Its boundary consists of $d$ {\it faces} $S_i = \{x \in S\mid x_i = 0\},\ i = 1, \ldots, d$. Take the parameters $R, \mu, A$ described in Subsection 1.2: a $d \times d$-matrix $R$ with diagonal elements equal to one, a $d\times d$ positive definite symmetric matrix $A$, and a vector $\mu \in \BR^d$. Assume the usual setting: a filtered probability space $(\Oa, \CF, (\CF_t)_{t \ge 0}, \MP)$ with the filtration satisfying the usual conditions. 

\begin{defn} Take a continuous function $\CX : \BR_+ \to \BR^d$ with $\CX(0) \in S$. A {\it solution to the Skorohod problem in the positive orthant $S$ with reflection matrix $R$ and driving function $\CX$} is a continuous function $\CZ : \BR_+ \to S$ such that there exists another continuous function $\CY : \BR_+ \to \BR^d$ with the following properties:

(i) for every $t \ge 0$, we have:
$\CZ(t) = \CX(t) + R\CY(t)$; 

(ii) for every $i = 1, \ldots, d$, the function $\CY_i$ is nondecreasing, satisfies $\CY_i(0) = 0$ and can increase only when $\CZ_i(t) = 0$, that is, when $\CZ(t) \in S_i$. We can write the last property formally as
$$
\int_0^{\infty}\CZ_i(t)\md \CY_i(t) = 0.
$$
\end{defn}

\begin{rmk} This definition can also be stated for a finite time horizon, that is, for functions $\CX, \CY, \CZ$ defined on $[0, T]$ instead of $\BR_+$. 
\end{rmk}

\begin{defn} Suppose $B = (B(t), t \ge 0)$ is an $((\CF_t)_{t \ge 0}, \MP)$-Brownian motion in $\BR^d$ with drift vector $\mu$ and covariance matrix $A$. A solution $Z = (Z(t), t \ge 0)$ to the Skorohod problem in $S$ with reflection matrix $R$ and driving function $B$ is called a {\it semimartingale reflected Brownian motion}, or SRBM, {\it in the positive orthant} $S$  with  {\it reflection matrix} $R$, {\it drift vector} $\mu$ and {\it covariance matrix $A$}. It is denoted by $\SRBM^d(R, \mu, A)$. The function $\CY$ is called the {\it vector of regulating processes}, and its $i$th component $\CY_i$ is called the {\it regulating process corresponding to the face $S_i$}. The process $B$ is called the {\it driving Brownian motion}. We say that $Z$ {\it starts from} $x \in S$ if $Z(0) = x$ a.s. 
\label{SRBMdefn}
\end{defn}

\begin{defn} Take a $d\times d$-matrix $R = (r_{ij})_{1 \le i, j \le d}$. It is called a {\it reflection matrix} if $r_{ii} = 1$ for $i = 1, \ldots, d$. It is called {\it nonnegative} if all its elements are nonnegative, that is, if $R \ge 0$; it is called {\it strictly nonnegative} if it is nonnegative and $r_{ii} > 0$ for $i = 1, \ldots, d$. It is called an {\it $\CS$-matrix} if there exists a vector $u \in \BR^d,\ u > 0$ such that $Ru > 0$. Any submatrix of $R$ of the form $[R]_I$, where $I \subseteq \{1, \ldots, d\}$ is a nonempty subset, is called a {\it principal submatrix} (this includes the matrix $R$ itself). The matrix $R$ is called {\it completely-$\CS$} if each of its principal submatrices is an $\CS$-matrix. It is called a $\CZ$-matrix if $r_{ij} \le 0$ for $i \ne j$. It is called {\it strictly inverse-nonnegative} if it is invertible and its inverse $R^{-1}$ is a strictly nonnegative matrix. It is called a {\it nonsingular $\CM$-matrix} if it is both completely-$\CS$ and a $\CZ$-matrix. 
\end{defn}

The following lemma is a useful characterization of reflection nonsingular $\CM$-matrices; its proof is given in the Appendix.

\begin{lemma} Suppose $R$ is a $d\times d$ reflection matrix. Then the following statements are equivalent:

(i) $R$ is a nonsingular $\CM$-matrix;

(ii) $R$ is a strictly inverse-nonnegative $\CZ$-matrix;

(iii) $R = I_d - Q$, where $Q$ is a nonnegative matrix with spectral radius less than $1$. 
\label{special}
\end{lemma}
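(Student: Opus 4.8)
The plan is to prove the equivalence of the three characterizations of a nonsingular $\CM$-matrix by establishing a cycle of implications, leaning on standard linear-algebra facts about $\CZ$-matrices while being careful about which facts are genuinely equivalent. First I would observe that a nonsingular $\CM$-matrix is by definition a $\CZ$-matrix that is completely-$\CS$, so to get $(i) \Ra (ii)$ it suffices to show that a completely-$\CS$ $\CZ$-matrix is invertible with a strictly nonnegative inverse. I would write $R = I_d - Q$ by setting $Q := I_d - R$; since $R$ is a $\CZ$-matrix with unit diagonal, $Q$ has zero diagonal and nonnegative off-diagonal entries, so $Q \ge 0$. The $\CS$-matrix property gives a vector $u > 0$ with $Ru > 0$, i.e. $Qu < u$ componentwise, which is exactly the kind of strict inequality that forces the spectral radius $\rho(Q)$ to be less than $1$: by the Perron--Frobenius theory for nonnegative matrices, $\rho(Q)$ is an eigenvalue with a nonnegative eigenvector, and testing $Qu < u$ against that eigenvector (or invoking the standard collatz--Wielandt min-max bound $\rho(Q) \le \max_i (Qu)_i/u_i < 1$) yields $\rho(Q) < 1$.

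Once $\rho(Q) < 1$ is in hand, the Neumann series $R^{-1} = (I_d - Q)^{-1} = \sum_{n \ge 0} Q^n$ converges, and since $Q \ge 0$ each term is nonnegative, so $R^{-1} \ge 0$; the $n = 0$ term contributes $I_d$, forcing the diagonal entries of $R^{-1}$ to be at least $1 > 0$, which gives strict nonnegativity. This simultaneously delivers $(iii)$ and $(ii)$, so in effect I would prove $(i) \Ra (iii)$ and $(iii) \Ra (ii)$ through this Neumann-series argument. For $(ii) \Ra (iii)$ I would run the computation in reverse: given a strictly inverse-nonnegative $\CZ$-matrix $R$, again set $Q = I_d - Q$, note $Q \ge 0$, and deduce $\rho(Q) < 1$ from the fact that $(I_d - Q)^{-1} = R^{-1}$ exists and is nonnegative (if $\rho(Q) \ge 1$ were an eigenvalue, the resolvent $(I_d - Q)^{-1}$ could not be a finite nonnegative matrix, again by Perron--Frobenius applied to the nonnegative eigenvector of $Q$).

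The remaining implication $(iii) \Ra (i)$ is where I expect the real work to lie. Assuming $R = I_d - Q$ with $Q \ge 0$ and $\rho(Q) < 1$, the $\CZ$-matrix property is immediate from $Q \ge 0$ having unit-diagonal complement, so the crux is showing $R$ is completely-$\CS$: every principal submatrix $[R]_I$ must be an $\CS$-matrix. The natural approach is to note that $[R]_I = [I_d]_I - [Q]_I$ where $[Q]_I \ge 0$ is a principal submatrix of a nonnegative matrix, and that the spectral radius of a principal submatrix of a nonnegative matrix does not exceed that of the full matrix, so $\rho([Q]_I) \le \rho(Q) < 1$. Then $[R]_I^{-1} = \sum_{n \ge 0} [Q]_I^n \ge 0$ is a well-defined nonnegative matrix, and I would produce the required $\CS$-vector by setting $u := [R]_I^{-1} v$ for any $v > 0$ (say $v = (1,\ldots,1)'$); since $[R]_I^{-1}$ is nonnegative with strictly positive diagonal it maps positive vectors to positive vectors, giving $u > 0$ with $[R]_I u = v > 0$, which is precisely the $\CS$-condition. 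The monotonicity of the spectral radius under passage to principal submatrices of a nonnegative matrix is the one external fact I expect to need most carefully, and I would justify it via Perron--Frobenius by embedding the submatrix's eigenvector back into $\BR^d$ with zero entries outside $I$ and comparing.
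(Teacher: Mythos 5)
Your proof is correct, but it closes the equivalence along a different cycle than the paper does. The paper proves (i) $\Ra$ (iii) $\Ra$ (ii) $\Ra$ (i), and it leans on the Horn--Johnson catalogue of equivalent characterizations of nonsingular $\CM$-matrices at both ends: (i) $\Ra$ (iii) is obtained by quoting the equivalence of two of those conditions, and (ii) $\Ra$ (i) quotes another pair to produce a single vector $x > 0$ with $Rx > 0$, then observes that because $R$ is a $\CZ$-matrix, $\bigl([R]_I [x]_I\bigr)_i = \sum_{j \in I} r_{ij}x_j \ge \sum_{j=1}^d r_{ij}x_j > 0$ (the dropped off-diagonal terms are nonpositive), so the one restricted vector $[x]_I$ witnesses the $\CS$-property of every principal submatrix at once. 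You instead make (i) $\Ra$ (iii) self-contained via the Collatz--Wielandt bound $\rho(Q) \le \max_i (Qu)_i/u_i < 1$, and you close the cycle through (iii) $\Ra$ (i), which forces you to verify the completely-$\CS$ property submatrix by submatrix; your mechanism --- $\rho([Q]_I) \le \rho(Q) < 1$ by monotonicity of the spectral radius under principal submatrices of a nonnegative matrix, followed by a Neumann series for each $[R]_I$ --- is valid, as is your Perron--Frobenius resolvent argument for (ii) $\Ra$ (iii), but both steps invoke more spectral machinery than the paper's one-line restriction trick. What your route buys is independence from the Horn--Johnson theorem as a black box; what the paper's route buys is a shorter and more elementary verification of complete-$\CS$-ness. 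Two cosmetic points: in your (ii) $\Ra$ (iii) paragraph ``$Q = I_d - Q$'' should read $Q = I_d - R$, and your set of implications (i)$\Ra$(iii), (iii)$\Ra$(ii), (ii)$\Ra$(iii), (iii)$\Ra$(i) is mildly redundant but does establish the full three-way equivalence.
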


We are ready to state an existence and uniqueness result, proved in \cite[Theorem 1]{HR1981a}, see also \cite[Theorem 2.1]{Wil1995}. This is not the most general result (for which the reader might want to see \cite{RW1988}, \cite{TW1993} and \cite[Theorem 2.3]{Wil1995}), but it is sufficient for our purposes. 

\begin{prop}
Suppose $R$ is a $d\times d$ reflection nonsingular $\CM$-matrix. 

(i) For every continuous driving function $\CX : \BR_+ \to \BR^d$ with $\CX(0) \in S$, the Skorohod problem in the orthant $S$ with reflection matrix $R$ has a unique solution. 

(ii) Take a vector $\mu \in \BR^d$ and a $d\times d$ positive definite symmetric matrix $A$. For every $x \in S$, there exists in the strong sense an $\SRBM^d(R, \mu, A)$ starting from $x$, and it is pathwise unique. These processes, starting from different $x \in S$, form  a Feller continuous strong Markov family. 
\end{prop}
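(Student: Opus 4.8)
The plan is to reduce both parts to properties of the deterministic \emph{Skorohod map} associated with $R$, exploiting the representation $R = I_d - Q$ from Lemma~\ref{special}(iii), in which $Q \ge 0$ and its spectral radius satisfies $\rho(Q) < 1$.

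For part (i), I would build the regulating function $\CY$ by a fixed-point scheme assembled from one-dimensional reflections. Writing $R = I_d - Q$ (so $q_{ii} = 0$), the $i$th coordinate of the constraint reads
$$
\CZ_i(t) = \Bigl(\CX_i(t) - \SL_{j}q_{ij}\CY_j(t)\Bigr) + \CY_i(t),
$$
so, with the other regulators held fixed, each coordinate is an ordinary scalar Skorohod problem, whose regulator is given by the explicit formula $y(t) = \sup_{0 \le s \le t}(-\phi(s))^+$ for driving function $\phi$ with $\phi(0) \ge 0$. This suggests the map $F$ acting on continuous $\BR^d$-valued paths by
$$
(F\CY)_i(t) = \sup_{0 \le s \le t}\Bigl(\SL_{j}q_{ij}\CY_j(s) - \CX_i(s)\Bigr)^+, \quad i = 1, \ldots, d.
$$
Fixed points of $F$ correspond exactly to solutions $(\CZ,\CY)$ of the Skorohod problem, so it suffices to show that $F$ has a unique fixed point.

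I would establish this with a contraction estimate. Since $t \mapsto \sup_{s \le t}(\,\cdot\,)^+$ is $1$-Lipschitz in the uniform norm and $Q \ge 0$, one obtains, for each $T$ and each coordinate $i$,
$$
\sup_{0\le s\le T}\bigl|(F\CY)_i(s) - (F\tilde{\CY})_i(s)\bigr| \le \SL_j q_{ij}\sup_{0\le s\le T}\bigl|\CY_j(s) - \tilde{\CY}_j(s)\bigr|.
$$
Writing $m(\phi) \in \BR^d$ for the vector of coordinatewise suprema of $|\phi|$ over $[0,T]$, this is the componentwise inequality $m(F\CY - F\tilde{\CY}) \le Q\, m(\CY - \tilde{\CY})$; iterating gives $m(F^n\CY - F^n\tilde{\CY}) \le Q^n m(\CY - \tilde{\CY})$. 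Because $\rho(Q) < 1$ we have $Q^n \to 0$, so some iterate $F^n$ is a strict contraction, and the Banach fixed-point theorem yields a unique fixed point, hence a unique solution. The same estimates show that the solution map $\CX \mapsto (\CZ, \CY)$ is Lipschitz in the uniform norm on each $[0,T]$ and \emph{causal} (its value at time $t$ depends only on $\CX|_{[0,t]}$). This Lipschitz continuity, which rests entirely on the spectral bound $\rho(Q) < 1$, is the technical core and the main obstacle of the argument.

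For part (ii), I would apply the Skorohod map pathwise. Let $B$ be an $\BR^d$-valued Brownian motion with drift $\mu$, covariance $A$, and $B(0) = 0$, and let $Z^x$ be the solution with driving function $x + B$. As $B$ has a.s. continuous paths and the map is defined and Lipschitz on continuous paths, $Z^x$ is a continuous process; causality makes it adapted, so it is a strong solution, and pathwise uniqueness is immediate from part (i). For the Markov family I would invoke the concatenation property of the map — restarting the solution from $Z(s)$ reproduces it on $[s, \infty)$ — which, together with the independent stationary increments of $B$, gives the Markov property; the Lipschitz bound $\sup_{0\le t\le T}\norm{Z^x(t) - Z^{x_0}(t)} \le C\norm{x - x_0}$ then yields Feller continuity of the transition semigroup, and the strong Markov property follows from Feller continuity together with path continuity of $x \mapsto Z^x$. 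Alternatively, both parts may simply be cited from \cite[Theorem 1]{HR1981a} and \cite[Theorem 2.1]{Wil1995}.
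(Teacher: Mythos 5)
Your proposal is correct. The paper itself offers no proof of this proposition --- it is stated as a known result and cited directly from Harrison--Reiman \cite{HR1981a} and Williams \cite{Wil1995} --- and the fixed-point argument you give (the map $(F\CY)_i(t) = \sup_{0\le s\le t}\bigl(\sum_j q_{ij}\CY_j(s) - \CX_i(s)\bigr)^+$, the componentwise bound $m(F^n\CY - F^n\tilde{\CY}) \le Q^n m(\CY - \tilde{\CY})$, and $Q^n \to 0$ from $\rho(Q)<1$) is precisely the classical Harrison--Reiman construction of the Skorohod map, so you have in effect reproduced the proof of the cited theorem rather than found a different route.
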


Now we define a key concept: hitting non-smooth parts of the boundary $\pa S$ of the orthant $S$. (We already mentioned this in the Introduction.) This concept is a counterpart of triple and simultaneous collisions for systems of competing Brownian particles.

\begin{defn} The set 
$$
S^0 := \cup_{1 \le i < j \le d}(S_i\cap S_j) \subseteq \pa S
$$
is called  {\it non-smooth parts of the boundary $\pa S$}. An $S$-valued process $Z = (Z(t), t \ge 0)$ {\it hits non-smooth parts of the boundary at time $t$} if there exist $i, j = 1, \ldots, d$, $i \ne j$ such that $Z_i(t) = Z_j(t) = 0$. We say that the process $Z$ {\it hits non-smooth parts of the boundary} if there exists $t > 0$ such that it hits non-smooth parts of the boundary at time $t$. If such $t > 0$ does not exist, then we say that $Z$ {\it avoids non-smooth parts of the boundary}. 
\end{defn}

\subsection{Connection between an SRBM in the orthant and systems of competing Brownian particles}

In this subsection, we show that the gaps between adjacent particles in a system of competing Brownian particles form an SRBM in the orthant. 

\begin{defn} Consider a system of $N$ competing Brownian particles (a classical system from Definition~\ref{classical} or a system with asymmetric collisions from Definition~\ref{asymmdefn}). Let $Y_1, \ldots, Y_N$ be the ranked particles. Then the $\BR^{N-1}_+$-valued process 
$$
Z = (Z(t), t \ge 0),\ \ \ \ Z(t) = (Z_1(t), \ldots, Z_{N-1}(t))',
$$
defined by 
$$
Z_k(t) = Y_{k+1}(t) - Y_k(t),\ \ \ \ t \ge 0,\ \ \ \ k = 1,\ldots, N-1,
$$
is called the {\it gap process} for this system of competing Brownian particles. 
\label{gap}
\end{defn}

\begin{lemma} For a system of competing Brownian particles from Definition~\ref{asymmdefn}, the gap process is an $\SRBM^{N-1}(R, \mu, A)$, where
\begin{equation}
\label{R}
R = 
\begin{bmatrix}
1 & -q^-_2 & 0 & 0 & \ldots & 0 & 0\\
-q^+_2 & 1 & -q^-_3 & 0 & \ldots & 0 & 0\\
0 & -q^+_3 & 1 & -q^-_4 & \ldots & 0 & 0\\
\vdots & \vdots & \vdots & \vdots & \ddots & \vdots & \vdots\\
0 & 0 & 0 & 0 & \ldots & 1 & -q^-_{N-1}\\
0 & 0 & 0 & 0 & \ldots & -q^+_{N-1} & 1
\end{bmatrix},
\end{equation}
\begin{equation}
\label{A}
A = 
\begin{bmatrix}
\si_1^2 + \si_2^2 & -\si_2^2 & 0 & 0 & \ldots & 0 & 0\\
-\si_2^2 & \si_2^2 + \si_3^2 & -\si_3^2 & 0 & \ldots & 0 & 0\\
0 & -\si_3^2 & \si_3^2 + \si_4^2 & -\si_4^2 & \ldots & 0 & 0\\
\vdots & \vdots & \vdots & \vdots & \ddots & \vdots & \vdots\\
0 & 0 & 0 & 0 & \ldots & \si_{N-2}^2 + \si_{N-1}^2 & -\si_{N-1}^2\\
0 & 0 & 0 & 0 & \ldots & -\si_{N-1}^2 & \si_{N-1}^2 + \si_N^2
\end{bmatrix},
\end{equation}
\begin{equation}
\label{mu}
\mu = \left(g_2 - g_1, g_3 - g_4, \ldots, g_N - g_{N-1}\right)'.
\end{equation}
The matrix $R$ in~\eqref{R} is a nonsingular $\CM$-matrix. 
\end{lemma}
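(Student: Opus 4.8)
The plan is to start from the defining system~\eqref{SDEasymm} for the ranked particles and difference it. For $k = 1, \ldots, N-1$, subtracting the equation for $Y_k$ from the one for $Y_{k+1}$ gives
\begin{align*}
Z_k(t) &= Z_k(0) + (g_{k+1} - g_k)t + \bigl(\si_{k+1}B_{k+1}(t) - \si_kB_k(t)\bigr) \\
&\quad + \bigl(q^+_{k+1} + q^-_k\bigr)L_{(k, k+1)}(t) - q^-_{k+1}L_{(k+1, k+2)}(t) - q^+_kL_{(k-1, k)}(t).
\end{align*}
The one genuinely essential input here is the collision constraint $q^+_{k+1} + q^-_k = 1$, which collapses the coefficient of $L_{(k, k+1)}$ to exactly $1$; this is precisely what lets the local time between ranks $k$ and $k+1$ serve as a regulating process with unit diagonal reflection coefficient.

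Next I would set $\CY_k := L_{(k, k+1)}$ for $k = 1, \ldots, N-1$ and define the driving function $X_k(t) := Z_k(0) + (g_{k+1} - g_k)t + \si_{k+1}B_{k+1}(t) - \si_kB_k(t)$. Reading off the coefficients of $\CY_{k-1}, \CY_k, \CY_{k+1}$ in the displayed identity gives $Z(t) = X(t) + R\CY(t)$ with $R$ exactly the tridiagonal matrix~\eqref{R} ($r_{kk} = 1$, $r_{k, k+1} = -q^-_{k+1}$, $r_{k, k-1} = -q^+_k$), and drift components $\mu_k = g_{k+1} - g_k$, i.e.~\eqref{mu}. Writing the martingale part as $\Theta B$, where $\Theta$ is the $(N-1)\times N$ matrix with entries $\Theta_{k,k} = -\si_k$ and $\Theta_{k, k+1} = \si_{k+1}$, a direct computation of the cross-variations using $\langle B_i, B_j\rangle_t = \de_{ij}t$ yields covariance $A = \Theta\Theta'$, which is exactly~\eqref{A}; since $\Theta$ has full row rank $N-1$, $A$ is positive definite, so $X$ is a genuine Brownian motion with drift $\mu$ and covariance $A$. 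The Skorohod conditions are then immediate from Definition~\ref{asymmdefn}(iii): $\CY_k(0) = 0$, each $\CY_k$ is nondecreasing and increases only when $Z_k = Y_{k+1} - Y_k = 0$ by~\eqref{localtime}, and $Z(t) \in \BR^{N-1}_+$ because $Y_1 \le \ldots \le Y_N$. This identifies $Z$ as an $\SRBM^{N-1}(R, \mu, A)$.

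For the final assertion, I would verify the two defining features of a nonsingular $\CM$-matrix through Lemma~\ref{special}. That $R$ is a $\CZ$-matrix is immediate, since $0 < q^{\pm}_k < 1$ forces every off-diagonal entry to be $\le 0$. To get the rest I would use characterization (iii) of Lemma~\ref{special}: write $R = I_{N-1} - Q$ with $Q \ge 0$ nonzero only on the two off-diagonals ($Q_{k, k+1} = q^-_{k+1}$, $Q_{k, k-1} = q^+_k$), and show that the spectral radius $\rho(Q) < 1$. Here the constraint reappears through column sums: the $j$th column sum of $Q$ equals $q^-_j + q^+_{j+1} = 1$ for each interior index $j$, but is strictly less than $1$ for $j = 1$ (where it equals $q^+_2$) and for $j = N-1$ (where it equals $q^-_{N-1}$). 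Since $Q$ is tridiagonal with strictly positive off-diagonal entries it is irreducible, so by the Perron--Frobenius bound applied to $Q'$ the spectral radius lies strictly below the maximal column sum whenever the column sums are not all equal; as the two boundary columns are strictly smaller, this gives $\rho(Q) < 1$. The cases $N = 2, 3$, where every column is a boundary column, are even easier, since then the maximal column sum is already strictly below $1$.

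The differencing and covariance computation in the first two steps are bookkeeping; the one place demanding care is the strict inequality $\rho(Q) < 1$. The naive column-sum estimate only yields $\rho(Q) \le 1$, so the main obstacle is justifying strictness, for which I rely on irreducibility of $Q$ together with the fact that the interior and boundary column sums do not all coincide.
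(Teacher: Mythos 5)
Your proof of the SRBM identification is correct and follows essentially the same route as the paper: difference the ranked equations~\eqref{SDEasymm}, use $q^+_{k+1}+q^-_k=1$ to normalize the diagonal reflection coefficient, read off $R$ and $\mu$, and compute the cross-variations of the driving martingale to obtain $A$ (your factorization $A=\Theta\Theta'$ with $\Theta$ of full row rank is a slightly cleaner way to see positive definiteness than the paper's entrywise computation, and your $\mu_k=g_{k+1}-g_k$ is the intended value; the ``$g_3-g_4$'' in the displayed~\eqref{mu} is a typo). The one place you go beyond the paper is the final claim that $R$ is a nonsingular $\CM$-matrix, which the paper's proof does not address at all; your argument --- $R=I_{N-1}-Q$ with $Q\ge 0$, interior column sums of $Q$ equal to $1$ and boundary column sums strictly less, then Perron--Frobenius applied to the irreducible matrix $Q'$ to get strict inequality $\rho(Q)<1$ and conclude via Lemma~\ref{special}(iii) --- is sound, including your separate handling of $N=2,3$ where irreducibility or the equal-row-sum dichotomy would otherwise need care.
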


\begin{cor} For a classical system of competing Brownian particles from Definition~\ref{classical}, the gap process is an $\SRBM^{N-1}(R, \mu, A)$, where 
\begin{equation}
\label{R12}
R = 
\begin{bmatrix}
1 & -1/2 & 0 & 0 & \ldots\\
-1/2 & 1 & -1/2 & 0 & \ldots\\
0 & -1/2 & 1 & 0 & \ldots\\
\vdots & \vdots & \vdots & \vdots & \ddots\\
\end{bmatrix},
\end{equation}
while $A$ and $\mu$ are given by~\eqref{A} and~\eqref{mu}, respectively. 
\end{cor}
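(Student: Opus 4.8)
The plan is to obtain this corollary as an immediate specialization of the preceding Lemma. The key observation is that a classical system of competing Brownian particles from Definition~\ref{classical} is precisely a system with asymmetric collisions from Definition~\ref{asymmdefn} in which every collision is split evenly, i.e. $q^+_k = q^-_k = 1/2$ for $k = 1, \ldots, N$. Indeed, comparing the dynamics~\eqref{symmSDE} of the ranked particles $Y_1, \ldots, Y_N$ of a classical system with the defining equation~\eqref{SDEasymm} of a system with asymmetric collisions, the two coincide under this choice of parameters, since then $q^+_k L_{(k-1,k)}(t) - q^-_k L_{(k,k+1)}(t) = \frac12 L_{(k-1,k)}(t) - \frac12 L_{(k,k+1)}(t)$.

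First I would verify that the ranked particles of a classical system, together with their collision local times, genuinely satisfy properties (i)--(iii) of Definition~\ref{asymmdefn} with $q^{\pm}_k = 1/2$. Property (i), the ordering $Y_1(t) \le \ldots \le Y_N(t)$, holds by the definition of ranked particles. Property (ii) is exactly~\eqref{symmSDE}, which was recalled in subsection 1.1 (and established in the references cited there). Property (iii) collects the standard local-time facts noted after~\eqref{localtime}: each $L_{(k-1,k)}$ starts at zero, is nondecreasing, and increases only when $Y_{k-1}(t) = Y_k(t)$, i.e. satisfies~\eqref{localtime}. Thus the ranked particles of a classical system form a system of competing Brownian particles with asymmetric collisions in the sense of Definition~\ref{asymmdefn}, and its gap process (Definition~\ref{gap}) is the same object in both settings.

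With this identification in hand, I would simply invoke the previous Lemma for this system. The covariance matrix $A$ in~\eqref{A} and the drift vector $\mu$ in~\eqref{mu} depend only on the diffusion coefficients $\si_1^2, \ldots, \si_N^2$ and the drift coefficients $g_1, \ldots, g_N$, not on the collision parameters, so they are unchanged. The reflection matrix $R$ in~\eqref{R}, upon substituting $q^-_k = q^+_k = 1/2$, has every off-diagonal entry $-q^{\pm}_k$ equal to $-1/2$, and therefore becomes exactly the tridiagonal matrix~\eqref{R12}; in particular it remains a nonsingular $\CM$-matrix. This yields the claimed conclusion. There is no substantive obstacle here: the only point requiring a word of care is confirming that~\eqref{symmSDE} is literally the $q^{\pm}_k = 1/2$ instance of~\eqref{SDEasymm} and that the local-time conditions transfer, both of which are immediate from the definitions and the material of subsection 1.1.
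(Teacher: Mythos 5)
Your proposal is correct and matches the paper's approach: the paper proves the general Lemma for systems with asymmetric collisions and obtains the Corollary by specializing to $q^{\pm}_k = 1/2$, exactly as you do, with the identification of the classical system as this special case resting on~\eqref{symmSDE} and the local-time properties from subsection 1.1. Your extra step of explicitly checking properties (i)--(iii) of Definition~\ref{asymmdefn} is a reasonable elaboration of what the paper leaves implicit.
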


The proof can be found in \cite{BFK2005} for classical systems or in \cite{KPS2012} for systems with asymmetric collisions. However, for the sake of completeness we give the full proof here. 

\begin{proof} Using the equation~\eqref{SDEasymm}, we get the following equation for $Z_k = Y_{k+1} - Y_k$:
\begin{align*}
Z_k(t) =& Z_k(0) + \left(g_{k+1} - g_k\right)t + \si_{k+1}B_{k+1}(t) - \si_kB_k(t) \\ & + \left(q^+_{k+1} + q^-_k\right)L_{(k, k+1)}(t) - q^+_kL_{(k-1, k)}(t) - q^-_{k+1}L_{(k+1, k+2)}(t).
\end{align*}
Let 
$$
\ol{W}_k(t) = Z_k(0) + \left(g_{k+1} - g_k\right)t + \si_{k+1}B_{k+1}(t) - \si_kB_k(t),\ \ k = 1, \ldots, N-1,\ \ t \ge 0.
$$
Recall that $q^+_{k+1} + q^-_k = 1$ for $k = 1, \ldots, N-1$. Therefore,
$$
Z_k(t) = \ol{W}_k(t) + L_{(k, k+1)}(t) - q^+_kL_{(k-1, k)}(t) - q^-_{k+1}L_{(k+1, k+2)}(t).
$$
The $\BR^{N-1}$-valued process $\ol{W} = (\ol{W}_1, \ldots, \ol{W}_{N-1})'$ is an $(\CF_t)_{t \ge 0}$-Brownian motion in $N-1$ dimensions, with drift vector $\mu = (g_2 - g_1, \ldots, g_N - g_{N-1})'$ and covariance matrix $A$ given by~\eqref{A}. Indeed, $B_1, \ldots, B_N$ are i.i.d. standard Brownian motions. Therefore, 
$$
\langle \ol{W}_k\rangle_t = \langle \si_{k+1}B_{k+1}(t) - \si_kB_k(t)\rangle_t = \left(\si_k^2 + \si_{k+1}^2\right)t,
$$
$$
\langle \ol{W}_k, \ol{W}_{k+1}\rangle_t = \langle \si_{k+1}B_{k+1}(t) - \si_kB_k(t), \si_{k+2}B_{k+2}(t) - \si_{k+1}B_{k+1}(t)\rangle_t = -\si_{k+1}^2t,
$$
and $\langle \ol{W}_k, \ol{W}_l\rangle_t = 0$ for $|k - l| \ge 2$. 
The process $L_{(k, k+1)}$ for each $k = 1, \ldots, N-1$ satisfies the following conditions: it starts from zero, that is, $L_{(k, k+1)}(0) = 0$; it is nondecreasing, and can increase only when $Y_k = Y_{k+1}$, or, equivalently, when $Z_k = 0$. The rest is trivial. 
\end{proof}

\begin{rmk} A system of competing Brownian particles has a simultaneous collision at time $t$ if and only if the gap process hits non-smooth parts $S^0$ of the boundary $\pa S$ at time $t$. This is our method of proof: we state and prove results for an SRBM, and then we translate them into the language of systems of competing Brownian particles. 
\label{reduction}
\end{rmk}

\subsection{Main Results for an SRBM}

In this subsection, we state a necessary and sufficient condition for an SRBM a.s. to avoid non-smooth parts of the boundary. For the rest of this subsection, fix $d \ge 2$. Suppose $R$ is a $d\times d$ reflection nonsingular $\CM$-matrix. Fix a vector $\mu \in \BR^d$ and a $d\times d$ positive definite symmetric matrix $A$. Recall the notation $S = \BR^d_+$ and consider the process $Z = (Z(t), t \ge 0) = \SRBM^d(R, \mu, A)$, starting from some point $x \in S$. 

Let us give a necessary and sufficient condition for an SRBM a.s. not hitting non-smooth parts of the boundary $\pa S$ of the orthant $S$. 

\begin{thm}
\label{SRBMnonsmooth}
(i) Suppose the following condition holds: 
\begin{equation}
\label{SSineq}
r_{ij}a_{jj} + r_{ji}a_{ii} \ge 2a_{ij},\ \ 1 \le i, j \le d.
\end{equation}
Then with probability one, there does not exist $t > 0$ such that $Z$ hits non-smooth parts of the boundary at time $t$.

(ii) If the condition~\eqref{SSineq} is violated for some $1 \le i < j \le d$, then with positive probability there exists $t > 0$ such that $Z_i(t) = Z_j(t) = 0$. 
\end{thm}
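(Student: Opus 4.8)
The plan is to reduce the $d$-dimensional question to a family of two-dimensional problems, one for each pair $1 \le i < j \le d$, and then to analyze a reflected Brownian motion in a planar wedge, where the condition~\eqref{SSineq} acquires a transparent geometric meaning.

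First I would reduce to $d=2$ by localization. Since $S^0 = \cup_{i<j}(S_i\cap S_j)$, it suffices to study, for each fixed pair $i<j$, whether $Z$ hits $S_i\cap S_j$. Any point of $S_i\cap S_j\cap S_k$ also lies in $S_i\cap S_k$; hence, handling all pairs at once, it is enough to control hits of $S_i\cap S_j$ occurring at points where $Z_k>0$ for every $k\ne i,j$. On any random time interval on which $Z$ stays in a neighborhood $\{z:z_k>\delta,\ k\ne i,j\}$, the regulating processes $\CY_k$, $k\ne i,j$, are constant, so the pair $(Z_i,Z_j)$ solves there a Skorohod problem in the quadrant $\BR^2_+$ with reflection matrix $[R]_{\{i,j\}}$, covariance $[A]_{\{i,j\}}$, and constant drift $(\mu_i,\mu_j)$. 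Covering $[0,\infty)$ by countably many such excursions away from the remaining faces reduces the theorem to: decide whether a $\SRBM^2([R]_{\{i,j\}},(\mu_i,\mu_j),[A]_{\{i,j\}})$ hits the origin.

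Next I would pass to a wedge. With $\tilde A=[A]_{\{i,j\}}$ and $\tilde R=[R]_{\{i,j\}}$, the change of variables $T=\tilde A^{-1/2}$ standardizes the driving Brownian motion and maps the quadrant onto a wedge of opening angle $\xi$ with $\cos\xi=-a_{ij}/\sqrt{a_{ii}a_{jj}}\in(-1,1)$, so $\xi\in(0,\pi)$ (convex); the reflection directions become $T$ applied to the columns of $\tilde R$, making signed angles $\theta_1,\theta_2\in(-\pi/2,\pi/2)$ with the inward normals (oriented positively toward the corner). Setting $\alpha:=(\theta_1+\theta_2)/\xi$, the crucial algebraic identity to establish is
\[
\alpha\le 0 \quad\Longleftrightarrow\quad r_{ij}a_{jj}+r_{ji}a_{ii}\ge 2a_{ij},
\]
with equality in~\eqref{SSineq} corresponding exactly to $\alpha=0$, the two-dimensional skew-symmetry condition. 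The theorem then becomes the single statement: the wedge-reflected Brownian motion avoids its corner a.s. if and only if $\alpha\le 0$. For sufficiency ($\alpha\le 0$), at equality I would invoke \cite{Wil1987}: skew-symmetric data forbid hitting the non-smooth boundary. For the strict case $\alpha<0$ I would use the Lyapunov function $u(r,\phi)=r^{\alpha}\cos(\alpha\phi-\theta_2)$, which is harmonic, satisfies the oblique Neumann condition on both edges (so local-time terms drop out under Itô--Tanaka), is strictly positive on the closed wedge since its phase interpolates linearly between $-\theta_2$ and $\theta_1$ inside $(-\pi/2,\pi/2)$, and blows up at the corner; checking that the drift contributes only a locally bounded term makes $u(Z_\cdot)$ a nonnegative local supermartingale near the corner, and optional stopping forbids reaching it. A stochastic comparison dominating the $\alpha<0$ process by the skew-symmetric one gives an alternative route that avoids the borderline analysis.

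For necessity ($\alpha>0$), after the same reduction I would apply the sharp Varadhan--Williams criterion \cite{VW1985}: when $\alpha>0$ the wedge-reflected Brownian motion reaches its corner with positive probability. Transferring this back, a Feller strong Markov and support argument shows the full process reaches, with positive probability, a configuration near $S_i\cap S_j$ bounded away from the other faces, whence $Z_i(t)=Z_j(t)=0$ for some $t>0$ with positive probability; the genuine and frozen drifts are negligible near the corner by Brownian scaling. The main obstacle is making the two-dimensional reduction fully rigorous over the infinite time horizon—patching the local planar descriptions across excursions away from the remaining faces, and handling the degenerate borderline $\alpha=0$—together with proving the correspondence $\alpha\le 0\Leftrightarrow$~\eqref{SSineq}, which carries the entire geometric content of the result.
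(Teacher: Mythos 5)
Your two-dimensional core (the $A^{-1/2}$ change of variables, the wedge angle $\cos\xi=-a_{ij}/\sqrt{a_{ii}a_{jj}}$, the identification of \eqref{SSineq} with $\theta_1+\theta_2\le 0$, and the Varadhan--Williams criterion) is exactly what the paper does in its Lemmas on the wedge and in Lemma~\ref{auxillary}. The place where you diverge --- and where there is a genuine gap --- is the reduction from dimension $d$ to dimension $2$. You reduce by localization: on excursions where $Z_k>\delta$ for all $k\ne i,j$, the pair $(Z_i,Z_j)$ solves a planar Skorohod problem with data $[R]_{\{i,j\}},[A]_{\{i,j\}}$. That correctly controls hits of $S_i\cap S_j$ at points where all other coordinates are strictly positive, but it says nothing about a first hit of $S^0$ occurring at a point where three or more coordinates vanish simultaneously. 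If $\tau$ is the first time $Z\in S^0$ and $Z_i(\tau)=Z_j(\tau)=Z_k(\tau)=0$, then for every $\delta>0$ the path leaves $\{z: z_l>\delta,\ l\ne i,j\}$ before time $\tau$, so none of your pairwise localized analyses covers the approach to $\tau$; your remark that such a point ``also lies in $S_i\cap S_k$'' just passes the problem to another pair with the same defect, and the argument is circular. This breaks part (i) as written. (For clean double hits the patching over countably many excursions, with random boundary starting points and the strong Markov property, is fiddly but doable; the higher-codimension case is the real obstruction.)

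The paper avoids this entirely by working globally in $d$ dimensions. For part (i) it builds a reflection matrix $\tilde R\le R$ satisfying the skew-symmetry \emph{equality} \eqref{SS}, invokes Williams' theorem (via the polyhedral transformation) to get that the $d$-dimensional $\SRBM^d(\tilde R,\mu,A)$ avoids all of $S^0$ --- including every higher-codimension edge at once --- and then uses the stochastic/pathwise comparison $\tilde Z\le Z$ of Proposition~\ref{compRR'} to conclude $Z_i(t)+Z_j(t)\ge \tilde Z_i(t)+\tilde Z_j(t)>0$ for every pair and every $t$. Note that your ``alternative route'' of comparing with the skew-symmetric process is in fact the only route here: a two-dimensional reduction cannot prove part (i) by comparison either, since Corollary~\ref{cutting} gives $[Z]_I\preceq\SRBM^2([R]_I,\cdot,[A]_I)$ with the inequality pointing the \emph{wrong} way for avoidance. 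For part (ii) that same inequality points the right way ($0\le [Z]_I\le\tilde Z$ and $\tilde Z$ hits the origin with positive probability forces $[Z]_I$ to hit it), which is cleaner than your localization-plus-support argument, though your route is salvageable there since one only needs to produce one hitting scenario. To repair your proof you should replace the localization step in part (i) by the comparison with a $d$-dimensional skew-symmetric SRBM, and keep your wedge analysis for part (ii).
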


\begin{rmk} The condition~\eqref{SSineq} can be written in the matrix form as $RD + DR^T \ge 2A$, where $D = \diag(A) = \diag(a_{11}, \ldots, a_{dd})$ is the diagonal $d\times d$-matrix with the same diagonal entries as $A$. The case when we have equality in~\eqref{SSineq} instead of inequality, is very important: the condition 
\begin{equation}
\label{SS}
RD + DR^T = 2A\ \ \ \Lra\ \ \ r_{ij}a_{jj} + r_{ji}a_{ii} = 2a_{ij},\ \ 1 \le i, j \le d,
\end{equation}
is called the {\it skew-symmetry condition}. This is a very important and well-studied case: see \cite{HW1987a}, \cite{HW1987b}, \cite{Wil1987}, \cite[Theorem 3.5]{Wil1995}. For example, under this condition, the SRBM has the product-of-exponentials stationary distribution. 
\end{rmk}

\begin{rmk} Whether an $\SRBM^d(R, \mu, A)$ a.s. avoids non-smooth parts of the boundary depends only on the matrices $R$ and $A$, not on the initial condition $Z(0)$ or the drift vector $\mu$. 
Some general results of this type are shown in subsection 3.2, Lemma~\ref{Independence}. But the actual probability of hitting non-smooth parts of the boundary, if it is positive, does depend on $\mu$ and the initial condition, see Remark~\ref{GIR}. 
\end{rmk}

\section{Proof of Theorem~\ref{SRBMnonsmooth}}

\subsection{Outline of the proof}


We can define a reflected Brownian motion not only in the orthant, but in more general domains: namely, in convex polyhedra, see \cite{DW1995}. Similarly to an SRBM in the orthant, this is a process which behaves as a Brownian motion in the interior of the domain and is reflected according to a certain vector at each face of the boundary. We can reduce an SRBM in the orthant with an arbitrary covariance matrix to a reflected Brownian motion in a convex polyhedron with identity covariance matrix. This construction is carried out in detail in subsection 3.5, Lemma~\ref{lemmatr}. 

Let us give a brief preview here. Consider an SRBM $Z = (Z(t), t \ge 0)$ in the orthant $\BR^d_+$ with covariance matrix $A$. 
Consider the process 
\begin{equation}
\label{lineartransf}
\ol{Z} = (\ol{Z}(t), t \ge 0),\ \ \ol{Z}(t) = A^{-1/2}Z(t),
\end{equation}
which is a reflected Brownian motion in the domain $A^{-1/2}\BR^d_+ := \{A^{-1/2}z\mid z \in \BR^d_+\}$ with identity covariance matrix. 

For a reflected Brownian motion in a polyhedral domain with identity covariance matrix, a sufficient condition (the {\it skew-symmetry condition}) for a.s. not hitting non-smooth parts of the boundary is known, see \cite[Theorem 1.1]{Wil1987}. Note that there are two forms of the skew-symmetry condition. One is for an SRBM in the orthant with arbitrary covariance matrix, which is~\eqref{SS}. The other is for a reflected Brownian motion in a convex polyhedron with identity covariance matrix, which was introduced in \cite{Wil1987}; in our paper, it is going to be given in~\eqref{SSgeneral}. In Lemma~\ref{equiv} we prove that under this linear transformation~\eqref{lineartransf}, these two conditions match. This justifies why they bear the same name. This allows us (in Lemma~\ref{first}) to prove part (i) of Theorem~\ref{SRBMnonsmooth} under the skew-symmetry condition~\eqref{SS}. 

Now, we need to show this for a more general condition~\eqref{SSineq}. We reduce this general case to the case of the skew-symmetry condition~\eqref{SS} by stochastic comparison (Lemma~\ref{compRR'}). We introduce an SRBM with new reflection matrix $\tilde{R}$ which satisfies the skew-symmetry condition and such that $\tilde{R} \ge R$. 

To prove part (ii), we first consider the case $d = 2$. The domain $A^{-1/2}\BR^2_+$ is in this case a {\it two-dimensional wedge}, which can be written in polar coordinates
$$
x_1 = r\cos\ta,\ \ x_2 = r\sin\ta,
$$
as 
$$
0 \le r < \infty,\ \ \xi_2 \le \ta \le \xi_1,
$$
where $\xi_1$, $\xi_2$ are angles such that $\xi_2 \le \xi_1 \le \xi_2 + \pi$. 
We mentioned that a reflected Brownian motion in this domain with zero drift vector and identity covariance matrix was studied in \cite{VW1985}, \cite{Wil1985a}, \cite{Wil1985b}, \cite{Wil1987b}. For this process, hitting non-smooth parts of the boundary means hitting the corner of the wedge (the origin). The result \cite[Theorem 2.2]{VW1985} gives a necessary and sufficient condition for a.s. avoiding the corner. Using the linear transformation~\eqref{lineartransf}, we can then translate these results for an SRBM in the positive quadrant with general covariance matrix. This proves (ii) for $d = 2$. 

To prove Theorem~\ref{SRBMnonsmooth} for the general $d$, we again use comparison techniques. We consider any two components $Z_i, Z_j$ of the process $Z = (Z(t), t \ge 0) = \SRBM^d(R, \mu, A)$, and compare them with a two-dimensional SRBM using Corollary~\ref{cutting}. 

Some parts of the calculations in this proof below have been done in certain previous articles. For example, the linear transformation $z \mapsto A^{-1/2}z$ and the way it transforms an SRBM in the orthant have been studied in the following articles: \cite[Section 9, Theorem 23]{HW1987b} (general dimension, under the skew-symmetry condition); \cite[Proposition 2]{KPS2012} (dimension $d = 2$). However, to make the exposition as lucid and self-contained as possible, we decided to do all calculations from scratch. 

\begin{rmk} In this artlce, we define a reflected Brownian motion in Definition~\ref{SRBMdefn} as a semimartingale. Similarly, in the article \cite{DW1995} a reflected Brownian motion in a convex polyhedron is defined in a semimartingale form; we present this in  Definition~\ref{semimartpolyhedron}. However, in the papers \cite{VW1985} and \cite{Wil1987}, a reflected Brownian motion is not given in a semimartingale form. Instead, it is defined as a solution to a certain submartingale problem: see Definition~\ref{defnsubmart}. We use the semimartingale definition, and in Lemma~\ref{submart} we prove that the semimartingale form of a reflected Brownian motion also satisfies the submartingale definition. This shows that we can indeed use the results from \cite{VW1985} and \cite{Wil1987}. 
\end{rmk}


\subsection{Girsanov removal of drift and independence of the initial conditions}

In this subsection, fix $d \ge 1$. Let $R$ be a $d\times d$ reflection nonsingular $\CM$-matrix. Let $A$ be a $d\times d$ symmetric positive definite matrix, and let $\mu \in \BR^d$. For every $x \in S$, denote by $\MP_x$ the probability measure corresponding to the $\SRBM^d(R, \mu, A)$ starting from $x$.

\begin{defn} For a nonempty subset $I \subseteq \{1, \ldots, d\}$, let 
$S_I = \{x \in S\mid x_i = 0,\ i \in I\}$. This is called an {\it edge} of the orthant $S$.
\end{defn}

For example, $S_{\{i, j\}} = S_i\cap S_j$ for $i \ne j$ is a piece of the non-smooth parts of the boundary $\pa S$. In this article, we are interested in an $\SRBM^d(R, \mu, A)$ hitting or avoiding these edges. But for this subsection, we shall work with a general edge $S_I$ of $S$. 

The main result of this subsection is that the property of an SRBM to a.s. avoid $S_I$ is independent of the starting point $x \in S$ and of the drift vector $\mu$. The proof is postponed until the end of this subsection. 

\begin{prop}  Let $Z = (Z(t)  \ge 0)$ be an $\SRBM^d(R, \mu, A)$. 
Let 
$$
p(x, R, \mu, A) = \MP_x\left(\exists\ t > 0:\ Z(t) \in S_I\right).
$$
Fix a $d\times d$ reflection nonsingular $\CM$-matrix $R$ and a positive definite symmetric $d\times d$ matrix $A$. Then one of these two statements is true:

\begin{itemize}
\item For all $\mu \in \BR^d$ and $x \in S$, we have: $p(x, R, \mu, A) = 0$: (the edge $S_I$ is avoided). 
\item For all $\mu \in \BR^d$ and $x \in S$, we have: $p(x, R, \mu, A) > 0$: (the edge $S_I$ is hit). 
\end{itemize}
\label{Independence}
\end{prop}

\begin{rmk}
\label{GIR}
We can reformulate Lemma~\ref{Independence} as follows: whether an $\SRBM^d(R, \mu, A)$ hits the edge $S_I$ does not depend on the initial conditions and the drift vector $\mu$; it depends only on the reflection matrix $R$ and the covariance matrix $A$. 

However, suppose $\SRBM^d(R, \mu, A)$ hits the edge $S_I$, so the probability $p(x, R, \mu, A)$ is positive. What is its exact value? This probability {\it does} depend on the drift vector $\mu$ and the initial condition $x \in S$. Let us give a one-dimensional example: a reflected Brownian motion on the positive half-line $\BR_+$ with no drift. With probability one, it hits the origin (which is the same as hitting the edge $S_{\{1\}}$). But a reflected Brownian motion on $\BR_+$ with positive drift $b$, starting from $x > 0$, hits the origin with probability $e^{-2bx}$, see \cite[Part 2, Section 2, formula 2.0.2]{BSBook}. This does depend on the drift $b$ and the initial condition $x$. 
\end{rmk}

\begin{defn} We say that an $\SRBM^d(R, \mu, A)$ {\it avoids non-smooth parts of the boundary} $\pa S$ of the orthant $S$ if it avoids every edge $S_I$ with $|I| = 2$. Otherwise, we say that an $\SRBM^d(R, \mu, A)$ {\it hits non-smooth parts of the boundary} $\pa S$. 
\end{defn}

From the discussion just above, we see: the property of hitting non-smooth parts of the boundary is independent of the initial condition $x$ and of the drift vector $\mu$. It depends only on $R$ and $A$. We can also see it from Theorem~\ref{SRBMnonsmooth}: the condition~\eqref{SSineq} involves only elements of $R$ and $A$. 

\subsection{Proof of Proposition~\ref{Independence}}

We split the proof of Lemma~\ref{Independence} in two steps. First, we show independence of a starting point $x \in S$ in Lemma~\ref{hit}, then of a drift vector $\mu \in \BR^d$ in Lemma~\ref{girsanov}, using the Girsanov transformation. 

\begin{lemma} For fixed parameters $R, \mu, A$ of an SRBM, we have:
either $p(x, R, \mu, A) = 0$ for all $x \in S$, or $p(x, R, \mu, A) > 0$ for all $x \in S$. In other words, either an $\SRBM^d(R, \mu, A)$ hits the edge $S_I$, or it avoids the edge $S_I$. 
\label{hit} 
\end{lemma}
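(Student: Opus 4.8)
The plan is to show that the hitting probability $p(x, R, \mu, A)$ cannot be zero at one starting point and positive at another; the natural mechanism is the strong Markov property combined with the support theorem / accessibility of points in the interior of $S$. First I would observe that the property is monotone in a useful sense: if the SRBM started from $y$ hits the edge $S_I$ with positive probability, then so does the SRBM started from any $x$, because with positive probability the process started from $x$ can be driven (by the driving Brownian motion) into any neighborhood of $y$ before time $1$, after which, by the strong Markov property, it behaves like a fresh SRBM started near $y$ and hits $S_I$ with positive probability. This gives the two-way implication and hence the dichotomy.

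More concretely, suppose $p(y, R, \mu, A) > 0$ for some $y \in S$. Fix an arbitrary $x \in S$. I would use the fact that for any open neighborhood $U$ of $y$ in $S$, we have $\MP_x(Z(1) \in U) > 0$. This is a support/reachability statement: the SRBM is a strong solution of the Skorohod problem driven by a $d$-dimensional Brownian motion, and the Skorohod map $\CX \mapsto \CZ$ is continuous (the Skorohod problem for a nonsingular $\CM$-matrix $R$ has a unique solution, by the Proposition of subsection 2.1, and one can show the map is Lipschitz in the uniform norm). Since the driving Brownian motion can, with positive probability, stay uniformly close to any prescribed continuous path on $[0,1]$, and since a suitable deterministic driving path steers $\CX(0) = x$ so that the solution $\CZ(1)$ lands in $U$, continuity of the Skorohod map transfers this to positive probability for the random process. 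Hence $\MP_x(Z(1) \in U) > 0$.

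Next, by the strong Markov property (the SRBM's form a strong Markov family, again by the Proposition of subsection 2.1), and using the fact that $p(\cdot, R, \mu, A)$ restricted to a neighborhood $U$ of $y$ is bounded below by some $c > 0$ — here I would need that $p(z, R, \mu, A) \ge c > 0$ for all $z$ in a neighborhood of $y$, which follows because the event of hitting $S_I$ over a fixed horizon is lower semicontinuous in the starting point — I can write
$$
p(x, R, \mu, A) \ge \MP_x\bigl(Z \text{ hits } S_I \text{ after time } 1\bigr) \ge \EE_x\left[1(Z(1) \in U)\, p(Z(1), R, \mu, A)\right] \ge c\,\MP_x(Z(1) \in U) > 0.
$$
This proves that $p(y,\cdot) > 0$ implies $p(x, \cdot) > 0$ for every $x$; by symmetry (exchanging the roles of $x$ and $y$) the contrapositive gives that $p(x,\cdot) = 0$ for one $x$ forces it to vanish for all $x$, establishing the dichotomy.

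The main obstacle will be the lower bound $p(z,R,\mu,A) \ge c$ uniformly over a neighborhood of $y$, i.e. the lower semicontinuity of the hitting probability in the starting point; a clean way around it is to replace the pointwise statement with an integrated one, bounding $\MP_x(\exists t > 1 : Z(t) \in S_I)$ below by integrating $p(\cdot)$ against the (positive) law of $Z(1)$ and invoking the positivity at the single point $y$ together with a continuity-in-starting-point argument for the shifted process. The reachability claim $\MP_x(Z(1) \in U) > 0$ is the other point requiring care; it rests on continuity of the Skorohod map and the full-support property of Brownian motion in the uniform topology, both of which are standard for reflection matrices that are nonsingular $\CM$-matrices.
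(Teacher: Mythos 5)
Your argument follows essentially the same route as the paper's: a positive lower bound on the hitting probability on a neighborhood $U$ of the good starting point, positivity of the time-$1$ transition kernel on $U$, and the Markov property at time $1$. The paper obtains the neighborhood bound by asserting continuity of $z \mapsto \MP_z(\exists\, t>0: Z(t)\in S_I)$ from Feller continuity (the same soft spot you flag as the semicontinuity issue), and gets $P^1(z,U)>0$ simply from $U$ having positive Lebesgue measure rather than via the support theorem and the Lipschitz Skorohod map.
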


\begin{proof}
Since the family of the processes $Z = (Z(t), t \ge 0) = \SRBM^d(R, \mu, A)$, starting from different points $x \in S$, is Feller continuous, the function
$$
f(z) := \MP_z\left(\exists t > 0: Z(t) \in S_I\right)
$$
is continuous on $S$. Let $P^t(x, C) = \MP_x(Z(t) \in C)$ be the transition function for the $\SRBM^d(R, \mu, A)$. By the Markov property, 
\begin{equation}
\label{mkv}
\MP_z\left(\exists t > 1: Z(t) \in S_I\right) = \int_SP^1(z, dy)f(y).
\end{equation}
But 
\begin{equation}
\label{mkv1}
\MP_z\left(\exists t > 1:\ Z(t) \in S_I\right) \le \MP_z\left(\exists t > 0:\ Z(t) \in S_I\right) = f(z).
\end{equation}
Combining~\eqref{mkv} and~\eqref{mkv1}, we have:
$$
\int_Sf(y)P^1(z, dy) \le f(z).
$$
Suppose for some $z_0 \in S$ we have: $f(z_0) > 0$. Since $f$ is continuous, there exists an open neighborhood $U$ of $z_0$ in $S$ such that $f(z) \ge f(z_0)/2 > 0$ for $z \in U$. But $U$ has positive Lebesgue measure, and so $P^1(z, U) > 0$ for $z \in S$. Therefore, $f(z) \ge P^1(z, U)f(z_0)/2 > 0$ for all $z \in S$. 

We have proved that if $f(z_0) > 0$ for at least one $z_0 \in S$, then $f(z) > 0$ for all $z \in S$. This completes the proof of the lemma. 
\end{proof}

\begin{lemma} Fix a nonempty subset $I \subseteq \{1, \ldots, d\}$. Then an $\SRBM^d(R, \mu, A)$ avoids $S_I$ if and only if an $\SRBM^d(R, 0, A)$ avoids $S_I$. 
\label{girsanov}
\end{lemma}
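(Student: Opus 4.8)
The plan is to remove the drift by a Girsanov change of measure and exploit the fact that an SRBM is a \emph{deterministic} functional of its driving Brownian motion. The key structural input is the existence-and-uniqueness Proposition quoted above: since $R$ is a reflection nonsingular $\CM$-matrix, the Skorohod problem in $S$ with reflection matrix $R$ has a unique solution for every continuous driving function. This yields a measurable (indeed continuous) Skorohod map $\Phi$ sending a driving path to the corresponding reflected path, so that for a fixed start $x \in S$ one has $Z = \Phi(B)$, where $B$ is a Brownian motion with drift $\mu$ (resp.\ $0$) and covariance $A$. Thus the law of $\SRBM^d(R, \mu, A)$ on path space is the pushforward under $\Phi$ of the law of $B$, and all that varies between the two processes in the statement is the law of the driving Brownian motion.

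The core step is a Girsanov argument on a finite horizon. Fix $T > 0$ and $x \in S$, and let $\CL^{\mu}_T$ and $\CL^{0}_T$ denote the laws on $C([0,T], \BR^d)$ of the Brownian motions started at $x$ with covariance $A$ and drifts $\mu$ and $0$, respectively. Because $A$ is positive definite, $A^{-1}$ exists; since the drift $\mu$ is constant and $A$ is nondegenerate, Novikov's condition holds trivially and the associated exponential density is a genuine martingale. Girsanov's theorem then gives an a.s.\ positive, finite Radon--Nikodym derivative relating $\CL^{\mu}_T$ and $\CL^{0}_T$, so these two laws are mutually absolutely continuous. Transferring this through $\Phi$: let $G_T \subseteq C([0,T], S)$ be the Borel set of paths that lie in $S_I$ at some $t \in (0, T]$, so that $\{Z \text{ hits } S_I \text{ on } (0,T]\} = \{B \in \Phi^{-1}(G_T)\}$ with $\Phi^{-1}(G_T)$ Borel. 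Mutual absolute continuity then forces $\MP^{\mu}_x(\Phi^{-1}(G_T)) = 0$ if and only if $\MP^{0}_x(\Phi^{-1}(G_T)) = 0$; that is, the probability of hitting $S_I$ during $(0,T]$ vanishes for drift $\mu$ exactly when it vanishes for drift $0$.

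Finally I would pass to the infinite horizon. The event of hitting $S_I$ at some $t > 0$ is the increasing union over $T$ (say $T = n \to \infty$) of the events of hitting $S_I$ on $(0, T]$, so ``avoiding $S_I$'' is equivalent to the hitting probability on $(0,T]$ being zero for every $T$. Combining this with the previous paragraph applied to each $T$ shows that $\SRBM^d(R, \mu, A)$ avoids $S_I$ if and only if $\SRBM^d(R, 0, A)$ does, which is the claim; Lemma~\ref{hit} guarantees that the notion of ``avoiding $S_I$'' does not depend on the fixed starting point $x$, so the statement is unambiguous. I expect the only genuinely delicate point to be the clean justification that $Z$ is a measurable functional of $B$ (resting entirely on uniqueness for the Skorohod problem) together with the observation that Girsanov equivalence is available only on finite horizons; this is precisely why the argument is organized around the increasing family of finite-horizon hitting events rather than the infinite-horizon event directly.
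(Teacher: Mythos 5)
Your proposal is correct and follows essentially the same route as the paper: a Girsanov change of measure on each finite horizon $[0,T]$ to establish mutual absolute continuity, followed by writing the infinite-horizon hitting event as an increasing union over $T$, with Lemma~\ref{hit} handling the choice of starting point. The one place you go beyond the paper is in routing the absolute continuity through the Skorohod map $\Phi$ applied to the driving Brownian motion; the paper simply asserts the equivalence of the SRBM laws restricted to $\CG_T$ directly, and your factorization $Z = \Phi(B)$ is precisely the justification that terse step implicitly relies on.
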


\begin{proof}
Using Lemma~\ref{hit}, without loss of generality, fix a starting point $z \in S$, the same for both processes. Let $Z = \SRBM^d(R, \mu, A)$, starting from $z$, and let $\ol{Z} = \SRBM^d(R, 0, A)$, starting from $z$. Let $P, \ol{P}$ be the distributions of the processes $Z, \ol{Z}$ on the space $C(\BR_+, \BR^d)$ of continuous functions $\BR_+ \to \BR^d$. For every $T > 0$, let $\CG_T$ be the $\sigma$-subalgebra of the Borel $\si$-algebra of $C(\BR_+, \BR^d)$, generated by the values of $x(s),\ 0 \le s \le T$ for all functions $x \in C(\BR_+, \BR^d)$. By the Girsanov theorem, for every $T > 0$, the restrictions $\left.P\right|_{\CG_T}$ and $\left.\ol{P}\right|_{\CG_T}$ are mutually absolutely continuous: they have common events of probability one. Therefore, the following statements are equivalent:
\begin{itemize}
\item With probability $1$, there is no $t \in (0, T]$ such that $Z(t) \in S_I$; \item With probability $1$, there is no $t \in (0, T]$ such that $\ol{Z}(t) \in S_I$. 
\end{itemize}
Suppose that with probability $1$, there is no $t > 0$ such that $Z_i(t) = 0$ for each $i \in I$; then for every $T > 0$, with probability $1$, there is no $t \in (0, T]$ such that $\ol{Z}_i(t) = 0$. Since $T > 0$ is arbitrary, we have: with probability $1$, there is no $t > 0$ such that $\ol{Z}_i(t) = 0$ for each $i \in I$. The converse statement is proved similarly. 
\end{proof}

\subsection{Stochastic comparison for an SRBM} 

Let us introduce the concept of {\it stochastic domination}, or {\it domination in law}. 

\begin{defn} Fix $d \ge 1$ and take two $\BR^d$-valued processes $Z = (Z(t), t \ge 0),\ \ol{Z} = (\ol{Z}(t), t \ge 0)$. We say that $Z$ is {\it stochastically dominated by} $\ol{Z}$ if for every $t \ge 0$ and $y \in \BR^d$ we have:
$$
\MP(Z(t) \ge y) \le \MP(\ol{Z}(t) \ge y).
$$
We say that $Z$ is {\it pathwise dominated by} $\ol{Z}$ if a.s. for all $t \ge 0$ we have: $Z(t) \le \ol{Z}(t)$. 
\end{defn}

If the processes $Z$ and $\ol{Z}$ are Markov, then by changing the probability space we can move from stochastic domination to pathwise domination, see \cite[Theorem 5]{Kamae1977}. There is a well-developed theory of stochastic domination and pathwise domination for processes with oblique reflection in the orthant. The most general result in this area is \cite[Theorem 4.1]{R2000}, see also \cite[Theorem 1.1(i)]{KR2012b}, \cite[Theorem 3.1]{Haddad2010}, \cite[Theorem 6(i)]{KW1996}. The following proposition is an immediate corollaries of \cite[Theorem 4.1]{R2000}. 

\begin{prop} Take two $d\times d$ reflection nonsingular $\CM$-matrices $R, \ol{R}$ such that $\ol{R} \le R$. Fix a vector $\mu \in \BR^d$ and a positive definite symmetric $d\times d$-matrix $A$. Let 
$$
Z = \SRBM^d(R, \mu, A),\ \ \ol{Z} = \SRBM^d(\ol{R}, \mu, A),\ \ \mbox{such that}\ \ Z(0) \preceq \ol{Z}(0).
$$
Then $Z \preceq \ol{Z}$. 
\label{compRR'}
\end{prop}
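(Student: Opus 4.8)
The plan is to obtain Proposition~\ref{compRR'} as a direct application of the general stochastic comparison theorem \cite[Theorem 4.1]{R2000} (compare also \cite[Theorem 1.1(i)]{KR2012b} and \cite[Theorem 6(i)]{KW1996}), so that the only work is to check that our hypotheses are exactly those required there and that the orderings line up. That theorem asserts monotonicity of an SRBM in the orthant with respect to its reflection matrix: if two SRBMs share the same covariance matrix $A$ and the same drift $\mu$, their reflection matrices are admissible and ordered entrywise, and their initial conditions are ordered, then the two processes remain ordered for all time. I would first recall this statement precisely in the orthant setting, identifying the admissibility requirement on the reflection matrices and the monotonicity direction it supplies.

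Next I would verify the hypotheses one by one. Both $R$ and $\overline R$ are nonsingular $\CM$-matrices by assumption; hence each is completely-$\CS$ by definition, and by Lemma~\ref{special} each is a strictly inverse-nonnegative $\CZ$-matrix, so in particular all off-diagonal entries are nonpositive and every principal submatrix is an $\CS$-matrix. This is exactly the structural input the comparison theorem needs. The entrywise order $\overline R \le R$, the common covariance $A$ and drift $\mu$, and the ordered initial data $Z(0) \preceq \overline Z(0)$ are all assumed directly in the statement. Feeding these into \cite[Theorem 4.1]{R2000} yields $Z \preceq \overline Z$. Since the downstream application (transferring avoidance of the non-smooth boundary from a skew-symmetric comparison process, via the implication that if the larger process hits an edge then so does the smaller one) requires \emph{pathwise} rather than merely distributional domination, I would, if the cited theorem is phrased distributionally, invoke the Markov property together with Kamae's coupling theorem \cite[Theorem 5]{Kamae1977}, as recalled just before the statement, to pass between the two notions on a common probability space without changing the conclusion.

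The main obstacle is not computational but a matter of reconciling conventions. The comparison literature states its admissibility and ordering hypotheses in several superficially different forms (entrywise ordering of the reflection matrices, sign conditions on off-diagonals, and sub-/super-solution conditions for the Skorohod map), so one must confirm that ``$\overline R \le R$ with $R, \overline R$ nonsingular $\CM$-matrices'' is precisely the admissible regime and that the resulting order is $Z \preceq \overline Z$ and not its reverse. The direction is the genuinely non-obvious point: a naive pointwise heuristic --- a more negative off-diagonal entry of $\overline R$ produces a larger downward push on neighbouring coordinates at the boundary, which would seem to make $\overline Z$ \emph{smaller} --- is misleading, because the magnitude of the pushing is itself endogenous and coupled across faces, and the feedback through the non-smooth parts of the boundary reverses the naive conclusion. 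This coupling across faces is exactly what \cite[Theorem 4.1]{R2000} controls, and it is why I would lean on that result rather than attempt a bare-hands synchronous coupling. The $\CZ$-matrix structure guaranteed by Lemma~\ref{special} is what renders the off-diagonal reflections monotone in the correct sense and forces the pathwise ordering to persist.
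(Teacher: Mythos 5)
Your overall plan coincides with the paper's: Proposition~\ref{compRR'} is proved there in one line, as an immediate corollary of \cite[Theorem 4.1]{R2000}, and your verification of the hypotheses (both matrices are reflection nonsingular $\CM$-matrices, hence $\CZ$-matrices and completely-$\CS$ by Lemma~\ref{special}; common $\mu$ and $A$; ordered initial data) together with the use of \cite[Theorem 5]{Kamae1977} to pass from stochastic to pathwise domination is exactly how the paper proceeds.

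However, your resolution of the one substantive issue you correctly single out --- the direction of the comparison --- is wrong. The ``naive pointwise heuristic'' you dismiss is in fact correct, and the cited theorem confirms it: the solution of the Skorohod problem is monotone \emph{nondecreasing} in the reflection matrix. A one-line check in $d = 2$: take driving path $\CX_1(t) = -t$, $\CX_2(t) \equiv 1$ and $r_{12} = 0$, $r_{21} = -b$ with $b > 0$. Then $\CY_1(t) = t$, $\CZ_1 \equiv 0$, and $\CZ_2(t) = (1 - bt)^+$, which decreases as $b$ increases, i.e.\ as $R$ decreases entrywise. Thus \cite[Theorem 4.1]{R2000} yields $\ol{Z} \preceq Z$ under $\ol{R} \le R$ and $\ol{Z}(0) \preceq Z(0)$ --- the reverse of the displayed conclusion. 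The printed statement of Proposition~\ref{compRR'} evidently has the roles of $Z$ and $\ol{Z}$ transposed: in the proof of Theorem~\ref{SRBMnonsmooth}(i) the paper constructs $\tilde{R} \le R$ satisfying the skew-symmetry condition and concludes from this very proposition that ``$\tilde{Z}$ is stochastically smaller than $Z$,'' which is the correct direction and the only one that makes the downstream argument work ($\tilde{Z}_i + \tilde{Z}_j > 0$ forces $Z_i + Z_j > 0$ only if $Z \ge \tilde{Z}$). Your proposed mechanism --- that ``feedback through the non-smooth parts of the boundary reverses the naive conclusion'' --- is not what the comparison theorem says, and if taken at face value it would break the subsequent application. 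The correct move is to state and use the comparison in the direction the literature actually provides, and to note that the proposition as printed should read $\ol{Z} \preceq Z$.
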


Here is another useful statement, proved in \cite[Corollary 3.6]{MyOwn2},  which is applied later in this article. 

\begin{cor} Let $d \ge 1$ and take a $d\times d$ reflection nonsingular $\CM$-matrix. Take a vector $\mu \in \BR^d$ and a positive definite symmetric $d\times d$-matrix $A$. Fix a nonempty subset $I \subseteq \{1, \ldots, d\}$ with $|I| = p,\ \ 1 \le p < d$. Let 
$$
Z = \SRBM^d(R, \mu, A),\ \ \ol{Z} = \SRBM^{p}([R]_I, [\mu]_I, [A]_I)
$$
such that $[Z(0)]_I = \ol{Z}(0)$ in law. Then $[Z]_I \preceq \ol{Z}$. 
\label{cutting}
\end{cor}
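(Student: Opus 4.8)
The plan is to realize $[Z]_I$ as a solution of a $p$-dimensional Skorohod problem with reflection matrix $[R]_I$ and then to compare its driving function with that of $\ol Z$ using a monotonicity property of the Skorohod map. First I would couple the two processes: realize $\ol Z$ using the restriction $[B]_I$ of the driving Brownian motion $B$ of $Z$ (note $[B]_I$ is a Brownian motion with drift $[\mu]_I$ and covariance $[A]_I$), and, since $[Z(0)]_I = \ol Z(0)$ in law, take these initial values equal almost surely. Writing the Skorohod decomposition $Z(t) = Z(0) + B(t) + RY(t)$ and extracting the rows indexed by $I$, I obtain, for each $i \in I$,
\[
Z_i(t) = Z_i(0) + B_i(t) + \SL_{j \in I}r_{ij}Y_j(t) + \SL_{j \notin I}r_{ij}Y_j(t).
\]
In vector form over $i \in I$ this reads $[Z]_I(t) = \CX(t) + [R]_I[Y]_I(t)$, where $\CX(t) := [Z(0)]_I + [B]_I(t) + V(t)$ and $V_i(t) := \SL_{j \notin I}r_{ij}Y_j(t)$. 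Since $[Z]_I \ge 0$ and each $Y_i$, $i \in I$, is nondecreasing, starts at $0$, and grows only when $Z_i = 0$, the pair $([Z]_I, [Y]_I)$ solves the Skorohod problem in $\BR^p_+$ with reflection matrix $[R]_I$ and driving function $\CX$.

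Second, I would record the structural facts that make the comparison work. Because $R$ is a nonsingular $\CM$-matrix, it is a $\CZ$-matrix, so $r_{ij} \le 0$ for all $i \ne j$; in particular $r_{ij} \le 0$ whenever $i \in I$ and $j \notin I$. As each such $Y_j$ is nondecreasing with $Y_j(0) = 0$, the perturbation $V$ is componentwise nonincreasing with $V(0) = 0$, hence $V \le 0$. Moreover $[R]_I$ is again a nonsingular $\CM$-matrix: writing $R = I_d - Q$ with $Q \ge 0$ and $\rho(Q) < 1$ (Lemma~\ref{special}(iii)), the principal submatrix satisfies $[R]_I = I_p - [Q]_I$ with $[Q]_I \ge 0$ and $\rho([Q]_I) \le \rho(Q) < 1$ (a principal submatrix of a nonnegative matrix has no larger spectral radius, by Perron--Frobenius), so Lemma~\ref{special}(iii) applies. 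Since also $[A]_I$ is positive definite as a principal submatrix of $A$, the Skorohod problem for $[R]_I$ has a unique solution for every driving function, and I may write $\ol Z = \ol{\Gamma}(\ol\CX)$ and $[Z]_I = \ol{\Gamma}(\CX)$, where $\ol{\Gamma}$ denotes the pathwise unique Skorohod map for $[R]_I$ and $\ol\CX(t) := \ol Z(0) + [B]_I(t)$. With the coupling above, $\CX = \ol\CX + V$.

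Third, I would invoke the key comparison: for a nonsingular $\CM$-matrix reflection, the Skorohod map is monotone under nonincreasing perturbations of the driving path, i.e.\ $V$ nonincreasing with $V(0)=0$ implies $\ol{\Gamma}(\ol\CX + V) \le \ol{\Gamma}(\ol\CX)$ pathwise. Applied to our situation this gives $[Z]_I(t) \le \ol Z(t)$ for all $t \ge 0$ almost surely, i.e.\ pathwise domination, which in particular yields the stochastic domination $[Z]_I \preceq \ol Z$.

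The main obstacle is this last monotonicity statement. In dimension one it can be checked by hand from the explicit formula $\ol{\Gamma}(\CX)(t) = \CX(t) + \max\bigl(0,\, -\inf_{0 \le s \le t}\CX(s)\bigr)$: a short case analysis on whether the running infimum is attained before or after the perturbation shows that it is genuinely the \emph{monotonicity} of $V$ (not merely $V \le 0$) that forces $\ol{\Gamma}(\ol\CX + V) \le \ol{\Gamma}(\ol\CX)$. In higher dimensions I would deduce the statement from the monotone Picard construction of the Skorohod map for a nonsingular $\CM$-matrix: writing the fixed-point equation $Y_i = \phi(\CX_i - (QY)_i)$ with $\phi$ the one-dimensional regulator map and iterating from $Y^{(0)} = 0$, the nonnegativity of $Q$ propagates the one-dimensional comparison through each iterate, while $\rho(Q) < 1$ guarantees convergence to the solution. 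Here the $\CZ$-matrix sign structure is essential: it is exactly what makes every cross-reflection act ``downward,'' so that pushing one coordinate down never pushes another up, and the componentwise order survives in the limit. Alternatively, this comparison is a special case of the general stochastic-ordering results of \cite[Theorem 4.1]{R2000} already used for Proposition~\ref{compRR'}.
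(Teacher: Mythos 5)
Your proof is correct, but note that the paper itself gives no proof of this corollary: it is imported wholesale from \cite[Corollary 3.6]{MyOwn2}, so the comparison here is between your self-contained argument and a bare citation. Your route is the natural one and the structural steps are all sound: the coupling via $[B]_I$; the identification of $([Z]_I,[Y]_I)$ as the solution of the $p$-dimensional Skorohod problem with matrix $[R]_I$ driven by $\ol{\CX}+V$, where $V_i=\sum_{j\notin I}r_{ij}Y_j$; the observation that the $\CZ$-matrix sign pattern makes $V$ componentwise nonincreasing with $V(0)=0$; and the fact that $[R]_I$ is again a reflection nonsingular $\CM$-matrix (this also follows directly from the definitions, since a principal submatrix of a completely-$\CS$ $\CZ$-matrix is a completely-$\CS$ $\CZ$-matrix, with no need for Perron--Frobenius). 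The one genuinely load-bearing ingredient is the monotonicity of the Skorohod map under nonincreasing perturbations of the driving path. Your one-dimensional verification is right --- indeed $v(s)\ge v(t)$ for $s\le t$ gives $\sup_{s\le t}\bigl(-(f(s)+v(s))\bigr)^+\le\sup_{s\le t}(-f(s))^+-v(t)$ in one line, whence $\Gamma(f+v)\le\Gamma(f)$ --- but the multidimensional Picard sketch as written is slightly short: propagating the comparison of \emph{reflected paths} coordinate by coordinate through the fixed-point iteration requires the strengthened inductive hypothesis that the difference of the successive regulator iterates is itself \emph{nondecreasing}, since knowing only $Y\ge\ol{Y}$ does not yield $Z\le\ol{Z}$ because of the competing $-QY$ term. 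That strengthening does go through, and in any case your fallback citation to \cite[Theorem 4.1]{R2000} --- the same deterministic comparison theorem the paper already invokes for Proposition~\ref{compRR'} --- closes the argument cleanly. What your version buys over the paper's citation is an explicit explanation of why discarding coordinates can only push the retained ones down: every cross-reflection term $r_{ij}Y_j$ with $j\notin I$ acts monotonically downward.
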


\subsection{An SRBM in a convex polyhedron}

Let us give a definition of an SRBM in convex polyhedra from \cite{DW1995}. Fix the dimension $d \ge 1$. First, let us define the state space, a polyhedral domain $\CP \subseteq \BR^d$. Fix $m \ge 1$, the number of edges. Let $n_1, \ldots, n_m \in \BR^d$ be unit vectors, and let $b_1, \ldots, b_m \in \BR$. The domain $\CP$ is defined by
\begin{equation}
\label{CPgen}
\CP := \{x \in \BR^d\mid n_i\cdot x \ge b_i,\ \ i  =1, \ldots, m\}.
\end{equation}
We assume that the interior of $\CP$ is nonempty and for each $j = 1, \ldots, m$ we have: 
\begin{equation}
\label{simplecondition}
\{x \in \BR^d\mid n_i\cdot x \ge b_i,\ \ i = 1, \ldots, m,\ i \ne j\} \ne \CP.
\end{equation}
In this case, the {\it edges} of $\CP$: 
$$
\CP_i = \{x \in \CP\mid n_i\cdot x = b_i\},\ \ i = 1, \ldots, m,
$$
are $(d-1)$-dimensional. Note that the vectors $n_i,\ i = 1, \ldots, m$, are inward unit normal vectors to each of the faces $\CP_1, \ldots, \CP_m$. Now, let us define an SRBM in the domain $\CP$. Fix the parameters of this SRBM: a vector $\mu \in \BR^d$, a $d\times d$ positive definite symmetric matrix $A$ and a $d\times m$-matrix $R$. 

\begin{defn} Fix a starting point $x \in \CP$. Take $B = (B(t), t \ge 0)$ to be a $d$-dimensional Brownian motion with drift vector $\mu$ and covariance matrix $A$, starting from $x$. Take an adapted continuous $\CP$-valued process $Z = (Z(t), t \ge 0)$
and an adapted continuous $\BR^m$-valued process
$$
L = (L(t), t \ge 0),\ \ \ L(t) = (L_1(t), \ldots, L_m(t))',
$$
such that:

(i) $Z(t) = B(t) + RL(t),\ \ t \ge 0$;

(ii) for every $i = 1, \ldots, m$, $L_i(0) = 0$, $L_i$ is nondecreasing and can increase only when $Z(t) \in \CP_i$. 

The process $Z$ is called a {\it semimartingale reflected Brownian motion} (SRBM) in the domain $\CP$ with {\it reflection matrix} $R$, {\it drift vector} $\mu$ and {\it covariance matrix} $A$. This process is denoted by  $\SRBM^d(\CP, R, \mu, A)$. 
\label{semimartpolyhedron}
\end{defn}

\begin{rmk} A particular case is an SRBM in the orthant $S$, which was introduced in Section 2: $\SRBM^d(R, \mu, A)$ is the same as $\SRBM^d(S, R, \mu, A)$.
\end{rmk}

Let $v_i$ be the $i$th column of $R$. An $\SRBM^d(\CP, R, \mu, A)$ behaves as a $d$-dimensional Brownian motion with drift vector $\mu$ and covariance matrix $A$ inside $\CP$. On each face $\CP_i$, it is reflected in the direction of the vector $v_i$. 

The paper \cite{DW1995} contains an existence and uniqueness result for an SRBM in $\CP$. We present this result in a slightly weaker version, which is still sufficient for this article.  For any nonempty subset $I \subseteq \{1, \ldots, m\}$, let $\CP_I := \cap_{i \in I}\CP_i$. A {\it positive linear combination} of vectors $u_1, \ldots, u_q$ is any vector $\al_1u_1 + \ldots + \al_qu_q$ with $\al_1, \ldots, \al_q > 0$. 

\begin{asmp} For every nonempty subset $I \subseteq \{1, \ldots, m\}$, we have: 

(i) $\CP_I \ne \varnothing$ and $\CP_{J} \subsetneq \CP_{I}$ for $I \subsetneq J \subseteq \{1, \ldots, m\}$;

(ii) there is a positive linear combination $v$ of vectors $v_i,\ i \in I$, such that $v\cdot n_i > 0,\ i \in I$;

(iii) there is a positve linear combination $n$ of vectors $n_i,\ i \in I$, such that $n\cdot v_i > 0,\ i \in I$. 
\label{Assumption}
\end{asmp}

The following result in an immediate corollary of \cite[Theorem 1.3]{DW1995}.

\begin{prop} Under Assumption~\ref{Assumption}, for every $x \in \CP$ there exists in the weak sense the process
$$
Z^{(x)} = (Z^{(x)}(t), t \ge 0) = \SRBM^d(\CP, R, \mu, A),
$$
starting from $Z^{(x)}(0) = x$, and it is unique in law. This family of processes $(Z^{(x)}, x \in \CP)$ is Feller continuous strong Markov. 
\label{existencepolyhedra}
\end{prop}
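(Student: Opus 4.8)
The plan is to derive Proposition~\ref{existencepolyhedra} directly from \cite[Theorem 1.3]{DW1995}, by verifying that Assumption~\ref{Assumption} implies the hypotheses of that theorem; once this is done, weak existence, uniqueness in law, and the Feller continuous strong Markov property are exactly the conclusions quoted there. First I would fix convenient notation: collect the inward unit normals $n_1, \ldots, n_m$ as the columns of a matrix $N$ and the reflection directions $v_1, \ldots, v_m$ as the columns of a matrix $V$, and for a nonempty $I \subseteq \{1, \ldots, m\}$ set $M_I := [N'V]_I = (n_i\cdot v_j)_{i, j\in I}$. The result of \cite{DW1995} is proved under a geometric nondegeneracy of the polyhedron $\CP$ together with a completely-$\CS$-type condition on the normal and reflection data at each face-intersection $\CP_I$.

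The heart of the argument is then a translation of Assumption~\ref{Assumption} into this language. Part (i), which demands $\CP_I \ne \varnothing$ for every nonempty $I$ and the strict nesting $\CP_J \subsetneq \CP_I$ for $I \subsetneq J$, forces each $\CP_I$ to be a genuine lower-dimensional stratum of the boundary; since it even requires \emph{all} intersections to be nonempty, it is strictly stronger than the geometric hypothesis of \cite{DW1995}, so the latter holds a fortiori (this is the sense in which our statement is a ``slightly weaker version''). For the reflection data, I would read part (ii) as producing positive scalars $\alpha_j$, $j \in I$, with $\sum_{j \in I}\alpha_j(n_i\cdot v_j) > 0$ for each $i \in I$; that is $M_I\alpha > 0$ with $\alpha > 0$, so $M_I$ is an $\CS$-matrix. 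Dually, part (iii) produces positive scalars $\beta_i$ with $M_I'\beta > 0$, so $M_I'$ is an $\CS$-matrix. Ranging over all $I$, conditions (ii) and (iii) say precisely that $N'V$ and its transpose are each completely-$\CS$, which is exactly the completely-$\CS$ requirement invoked by \cite[Theorem 1.3]{DW1995} --- condition (ii) being of the kind used to establish existence and (iii) the dual condition underpinning uniqueness and the semimartingale (submartingale-problem) formulation.

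With every hypothesis matched, I would simply invoke \cite[Theorem 1.3]{DW1995} to conclude that for each $x \in \CP$ there exists, in the weak sense, a process $Z^{(x)} = \SRBM^d(\CP, R, \mu, A)$ with $Z^{(x)}(0) = x$, that it is unique in law, and that the family $(Z^{(x)}, x \in \CP)$ is Feller continuous and strong Markov. The main obstacle is not probabilistic but one of faithful bookkeeping: expressing the positive-linear-combination statements (ii) and (iii) as $\CS$-matrix conditions in the precise notation of \cite{DW1995}, and checking that requiring every $\CP_I$ to be nonempty genuinely implies --- rather than merely resembles --- the geometric regularity assumed there. Once that alignment is confirmed, no further probabilistic argument is needed and the proposition is indeed an immediate corollary.
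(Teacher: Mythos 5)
Your proposal matches the paper exactly: the paper simply states that the proposition is an immediate corollary of \cite[Theorem 1.3]{DW1995}, with Assumption~\ref{Assumption} being precisely (in fact, slightly stronger than) the hypotheses of that theorem. Your additional bookkeeping --- recasting conditions (ii) and (iii) as the completely-$\CS$ property of $(n_i\cdot v_j)$ and its transpose --- is a correct and faithful translation of the Dai--Williams hypotheses, so the argument goes through as you describe.
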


\begin{rmk} By Assumption~\ref{Assumption}(ii) applied to a subset $I = \{i\}$, we have: $v_i\cdot n_i > 0$. So we can normalize $v_i$ to make $v_i\cdot n_i = 1$. This is done by replacing $v_i$ by $k_iv_i$ for $k_i := (v_i\cdot n_i)^{-1}$ and replacing $L_i$ by $k_i^{-1}L_i$. 
Doing this for each $i = 1,\ldots, m$ is called {\it standard normalization}. The new reflection matrix is $\ol{R} = R\mathcal D$, where $\mathcal D = \diag( (v_1\cdot n_1)^{-1}, \ldots,  (v_m\cdot n_m)^{-1})$. If $\ol{v}_i = k_iv_i$ is the $i$th column of $\ol{R}$, we can decompose it into the sum
\begin{equation}
\label{decompos}
\ol{v}_i = n_i + q_i,
\end{equation}
where 
$$
q_i\cdot n_i = (\ol{v}_i - n_i)\cdot n_i = \ol{v}_i\cdot n_i - n_i\cdot n_i = 1 - 1 = 0,\ \ i = 1, \ldots, m.
$$
These vectors $n_i$ and $q_i$ are called the {\it normal and tangential components} of the reflection vector $\ol{v}_i$, respectively. 
\label{std}
Similar normalization was done for an SRBM in the orthant in \cite[Appendix B]{BDH2010}. 
\label{normalrmk}
\end{rmk}

As mentioned above, in the papers \cite{VW1985}, \cite{Wil1985a}, \cite{Wil1985b}, \cite{Wil1987}, \cite{Wil1987b}, reflected Brownian motion was defined as a solution to a certain submartingale problem. 
We are going to show that if an SRBM is defined in a semimartingale form, as in Definition~\ref{semimartpolyhedron}, then it is also a solution to this submartingale problem, so we can use the results of the papers mentioned
above. 

\begin{defn} Take a convex polyhedron $\CP$ from~\eqref{CPgen} and the parameters $R, \mu, A$ from Definition~\ref{semimartpolyhedron}. The symbol $C^2_c(\CP)$ stands for the family of twice continuously differentiable functions $f : \CP \to \BR$ with compact support. Define the following operator for functions $f \in C^2_c(\CP)$:
$$
\CL f := \frac12\SL_{i=1}^d\SL_{j=1}^da_{ij}\frac{\pa^2f}{\pa x_i\pa x_j} + \SL_{i=1}^d\mu_i\frac{\pa f}{\pa x_i}.
$$
A $\CP$-valued continuous adapted process $Z = (Z(t), t \ge 0)$ is called a {\it solution to the submartingale problem associated with} $(\CP, R, \mu, A)$, {\it starting from $x \in \CP$}, if:

(i) $Z(0) = x$ a.s.;

(ii) for every function $f \in C^2_c(\CP)$ which satisfies
$$
v_i\cdot \nabla f(x) \ge 0\ \ \mbox{for}\ \ x \in \CP_i, \ \ \mbox{for each}\ \ i = 1, \ldots, m,
$$
the following process is an $(\CF_t)_{t \ge 0}$-submartingale:
$$
\CM^f = (\CM^f(t), t \ge 0),\ \ \ \CM^f(t) = f(Z(t)) - \int_0^t\CL f(Z(s))\md s.
$$
\label{defnsubmart}
\end{defn}

\begin{lemma} The process $\SRBM^d(\CP, R, \mu, A)$, starting from $x \in \CP$, is a solution to the submartingale problem associated with $(\CP, R, \mu, A)$, starting from $x$. 
\label{submart}
\end{lemma}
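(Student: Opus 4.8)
The plan is to verify the submartingale property directly from the semimartingale representation of $Z = \SRBM^d(\CP, R, \mu, A)$ using It\^o's formula. Fix a function $f \in C^2_c(\CP)$ satisfying $v_i\cdot\nabla f(x) \ge 0$ for all $x \in \CP_i$ and each $i = 1, \ldots, m$. Since $Z(t) = B(t) + RL(t)$ with $B$ a Brownian motion with drift $\mu$ and covariance $A$, and $L = (L_1, \ldots, L_m)'$ of bounded variation, I would apply It\^o's formula to $f(Z(t))$. Writing $v_i$ for the $i$th column of $R$, the finite-variation contribution of $L_i$ to $\md Z$ is $v_i\,\md L_i(t)$, so the chain rule produces a term $\sum_i (v_i\cdot\nabla f(Z(t)))\,\md L_i(t)$. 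The remaining terms assemble exactly into $\CL f(Z(t))\,\md t$ plus a local martingale coming from $\md B$.

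Concretely, It\^o's formula gives
\begin{align*}
f(Z(t)) = f(Z(0)) &+ \int_0^t \nabla f(Z(s))\cdot\md B(s) + \int_0^t\CL f(Z(s))\,\md s \\ &+ \SL_{i=1}^m\int_0^t v_i\cdot\nabla f(Z(s))\,\md L_i(s),
\end{align*}
where the first integral is a local martingale and the second reproduces the drift-plus-diffusion part $\CL f$. Subtracting $\int_0^t\CL f(Z(s))\,\md s$, we obtain that $\CM^f(t)$ equals $f(Z(0))$ plus the local martingale plus the local-time terms. The key observation is that each local-time term is nondecreasing: by Definition~\ref{semimartpolyhedron}(ii), $L_i$ can increase only when $Z(s) \in \CP_i$, and on that face the hypothesis on $f$ forces $v_i\cdot\nabla f(Z(s)) \ge 0$; formally, using the support property of $L_i$,
$$
\int_0^t v_i\cdot\nabla f(Z(s))\,\md L_i(s) = \int_0^t 1(Z(s) \in \CP_i)\bigl(v_i\cdot\nabla f(Z(s))\bigr)\,\md L_i(s) \ge 0.
$$
Hence $\CM^f$ is the sum of a local martingale and a nondecreasing process, which is the desired submartingale structure up to the usual integrability caveat.

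The remaining point, and the one requiring the most care, is upgrading the local martingale to a genuine martingale so that $\CM^f$ is an honest submartingale rather than a local one. Here I would use that $f \in C^2_c(\CP)$ has compact support: both $f$ and its first derivatives are bounded, so $\nabla f(Z(s))$ is bounded, and the stochastic integral $\int_0^t\nabla f(Z(s))\cdot\md B(s)$ is a true square-integrable martingale because its quadratic variation $\int_0^t \nabla f(Z(s))'A\,\nabla f(Z(s))\,\md s$ has finite expectation (the integrand is bounded). Similarly the drift part of $\md B$ contributes a bounded integrand. Thus the main obstacle is purely the integrability bookkeeping, which the compact-support assumption resolves cleanly; the structural content is the It\^o computation together with the sign condition on the reflection terms.
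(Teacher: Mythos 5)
Your proof is correct and follows essentially the same route as the paper: apply It\^o's formula to $f(Z(t))$ using the semimartingale decomposition $Z = B + RL$, identify $\CM^f$ as a martingale plus the nondecreasing reflection terms $\int_0^t v_i\cdot\nabla f(Z(s))\,\md L_i(s)$, and use compact support of $f$ for integrability. The only cosmetic point is that in your displayed formula the stochastic integral should be taken against the driftless part $B_i(s)-\mu_i s$ (as the paper does), since $\CL f$ already contains the drift term; your closing paragraph makes clear you intended exactly this.
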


The proof is postponed until the Appendix.

\subsection{Connection between an SRBM in the orthant and an SRBM in a polyhedron}

Using the linear transformation~\eqref{transform}, we can switch from an $\SRBM^d(R, \mu, A)$ in the orthant with covariance matrix $A$ to an $\SRBM^d$ in a convex polyhedron with identity covariance matrix. 

\begin{lemma} Consider the process $Z = (Z(t), t \ge 0)$, which is an 
$\SRBM^d(R, \mu, A)$. Define a new process $\ol{Z} = (\ol{Z}(t), t \ge 0)$ as follows: 
\begin{equation}
\label{transform}
\ol{Z}(t) = A^{-1/2}Z(t).
\end{equation}
\label{lemmatr}

(i) The process $\ol{Z}$ is an $\SRBM^d(\CP, \ol{R}, \ol{\mu}, I_d)$
in the convex polyhedron
\begin{equation}
\label{CP}
\CP := \{A^{-1/2}z\mid z \in S\} = \{\ol{z} \in \BR^d\mid A^{1/2}\ol{z} \ge 0\},
\end{equation}
with reflection matrix $\ol{R} := A^{-1/2}R$, drift vector $\ol{\mu} := A^{-1/2}\mu$ and covariance matrix $\ol{A} = I_d$. The domain $\CP$ is a convex polyhedron as in~\eqref{CPgen} with $m = d$ edges: $\CP_i := \{A^{-1/2}x\mid x \in S_i\},\ i = 1, \ldots, d$. This domain satisfies the condition~\eqref{simplecondition} and the Assumption~\ref{Assumption} (i). 

(ii) The standard normalization from Remark~\ref{std} gives us a new reflection matrix: $\tilde{R} := \ol{R}D^{1/2} = A^{-1/2}RD^{1/2}$. The $i$th column of $\tilde{R}$ is equal to
\begin{equation}
v_i := a_{ii}^{1/2}A^{-1/2}Re_i,\ \ i = 1, \ldots, d. 
\label{vi}
\end{equation}
The inward unit normal vector to the face $\CP_i$ is given by
\begin{equation}
\label{ni}
n_i = a_{ii}^{-1/2}A^{1/2}e_i,\ \ i = 1, \ldots, d.
\end{equation}
Furthermore, Assumption~\ref{Assumption}(ii) and (iii) is satisfied. 
\end{lemma}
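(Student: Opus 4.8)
The plan is to verify directly that the transformed process fits Definition~\ref{semimartpolyhedron}, and then to convert the two geometric conditions in Assumption~\ref{Assumption} into purely algebraic statements about the matrix $R$. Throughout I use that $A$ is symmetric positive definite, so $A^{1/2}$ and $A^{-1/2}$ are symmetric.

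For part (i), I would begin from the Skorohod decomposition $Z(t) = B(t) + R\CY(t)$ of the given $\SRBM^d(R, \mu, A)$, where $B$ has drift $\mu$ and covariance $A$, and apply $A^{-1/2}$ to both sides to get $\ol{Z}(t) = \ol{B}(t) + \ol{R}\CY(t)$ with $\ol{B} := A^{-1/2}B$ and $\ol{R} := A^{-1/2}R$. Since $A^{-1/2}$ is symmetric, $\ol{B}$ is a Brownian motion with drift $A^{-1/2}\mu = \ol{\mu}$ and covariance $A^{-1/2}A\,A^{-1/2} = I_d$, as claimed; the process $\ol{Z}$ is adapted and $\CP$-valued since $A^{-1/2}$ is a deterministic linear bijection and $\CP = A^{-1/2}S$. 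The reflection condition transfers immediately: because $A^{-1/2}$ is a bijection, $\ol{Z}(t) \in \CP_i$ if and only if $Z(t) \in S_i$, i.e. $Z_i(t) = 0$, so each $\CY_i$ is nondecreasing, vanishes at $0$, and increases only on $\CP_i$. To identify $\CP$ and its faces, I note $z \ge 0 \Lra A^{1/2}\ol{z} \ge 0$, giving the two descriptions of $\CP$ in~\eqref{CP}; writing $z_i = e_i\cdot z = (A^{1/2}e_i)\cdot\ol{z}$ shows $\CP_i = \{\ol{z} \in \CP \mid (A^{1/2}e_i)\cdot\ol{z} = 0\}$, so the $i$th inward normal is proportional to $A^{1/2}e_i$. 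Finally, condition~\eqref{simplecondition} and Assumption~\ref{Assumption}(i) follow because the orthant $S$ has $d$ essential faces and satisfies $S_J \subsetneq S_I$ for $I \subsetneq J$, and a linear bijection preserves both properties.

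For part (ii), I would first normalize. Since $\norm{A^{1/2}e_i}^2 = e_i\cdot Ae_i = a_{ii}$, the unit inward normal is $n_i = a_{ii}^{-1/2}A^{1/2}e_i$, which is~\eqref{ni}. A short computation using symmetry of $A^{-1/2}$ and $r_{ii} = 1$ gives $(\ol{R}e_i)\cdot n_i = a_{ii}^{-1/2}$, so the standard normalization from Remark~\ref{std} multiplies the $i$th reflection vector by $a_{ii}^{1/2}$, yielding $v_i = a_{ii}^{1/2}A^{-1/2}Re_i$ and $\tilde{R} = \ol{R}D^{1/2}$ as in~\eqref{vi}. The central computation is the general inner product
$$
v_i\cdot n_j = a_{ii}^{1/2}a_{jj}^{-1/2}(Re_i)^T A^{-1/2}A^{1/2}e_j = a_{ii}^{1/2}a_{jj}^{-1/2}r_{ji},
$$
which converts both remaining assumptions into algebraic conditions on principal submatrices of $R$. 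Assumption~\ref{Assumption}(ii) asks, for each nonempty $I$, for coefficients $\al_i > 0$ with $\sum_{i \in I}\al_i(v_i\cdot n_j) > 0$ for all $j \in I$; substituting the formula and setting $u_i := \al_i a_{ii}^{1/2}$ reduces this to finding $u > 0$ with $[R]_I u > 0$, which is exactly the assertion that $[R]_I$ is an $\CS$-matrix. Since $R$ is a nonsingular $\CM$-matrix, it is completely-$\CS$, so such $u$ exists.

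Assumption~\ref{Assumption}(iii) reduces similarly, via $u_j := \gamma_j a_{jj}^{-1/2}$, to finding $u > 0$ with $[R]_I^T u > 0$, i.e. to $[R]_I^T = [R^T]_I$ being an $\CS$-matrix. The point requiring care is that this needs $R^T$, not $R$, to be completely-$\CS$, and the $\CS$-property is not transpose-invariant in general. I would supply this using Lemma~\ref{special}(iii): writing $R = I_d - Q$ with $Q \ge 0$ of spectral radius less than $1$, we get $R^T = I_d - Q^T$ with $Q^T \ge 0$ of the same spectral radius, so $R^T$ is again a nonsingular $\CM$-matrix and hence completely-$\CS$. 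This inheritance of the $\CM$-matrix property (and therefore of the completely-$\CS$ property) under transposition is the only nontrivial obstacle in the argument; once it is in hand, both assumptions follow and the proof is complete.
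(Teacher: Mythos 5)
Your proof is correct and follows essentially the same route as the paper's: apply $A^{-1/2}$ to the semimartingale decomposition, identify the faces and unit inward normals, normalize to get $v_i = a_{ii}^{1/2}A^{-1/2}Re_i$, compute $v_i\cdot n_j$, and reduce Assumptions~\ref{Assumption}(ii) and (iii) to the completely-$\CS$ property of $R$ and of $R^T$, the latter obtained from Lemma~\ref{special}. Your index bookkeeping ($v_i\cdot n_j = a_{ii}^{1/2}a_{jj}^{-1/2}r_{ji}$) is in fact the correct one --- the paper writes $r_{ij}$ at this step, which merely swaps which of the two assumptions reduces to $R$ versus $R^T$ being completely-$\CS$ and does not affect the conclusion.
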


\begin{proof} (i) We have: $Z(t) = B(t) + RL(t)$, where $B = (B(t), t \ge 0)$ is the driving Brownian motion for the process $Z$, and $L = (L(t), t \ge 0)$ is the vector of regulating processes. Here, $B$ is a $d$-dimensional Brownian motion with drift vector $\mu$ and covariance matrix $A$. Define $W = (W(t), t \ge 0)$ as $W(t) = A^{-1/2}B(t)$: this is a $d$-dimensional Brownian motion with drift vector $\ol{\mu} = A^{-1/2}\mu$ and identity covariance matrix. Then $\ol{Z}(t) := A^{-1/2}Z(t) = W(t) + A^{-1/2}RL(t)$. 
The state space of $\ol{Z}$ is the domain $\CP$, given in~\eqref{CP}. This is a convex polyhedron of the type~\eqref{CPgen}. Let us show 
it satisfies the condition~\eqref{simplecondition} and the Assumption~\ref{Assumption} (i). The linear transformation~\eqref{transform} is a bijection $\BR^d \to \BR^d$, hence it suffices to show that the orthant $S$ satisfies the condition~\eqref{simplecondition} and the Assumption~\ref{Assumption} (i), which is straightforward.

(ii) The face $\CP_i$ is spanned by vectors $A^{-1/2}e_j,\ j \in \{1, \ldots, d\}\setminus\{i\}$. The vector $n_i$ is normal to $\CP_i$, so we must have:  $n_i\cdot A^{-1/2}e_j = 0$. Since the matrix $A^{-1/2}$ is symmetric, $A^{-1/2}n_i\cdot e_j = 0$ for $ j \in \{1, \ldots, d\}\setminus\{i\}$. Therefore, $A^{-1/2}n_i = k_ie_i$ for some $k_i \in \BR$; so $n_i = k_iA^{1/2}e_i$. Let us find $k_i$ such that $n_i$ is inward oriented and has unit length. 

The inward orientation means that for any point $w$ in the relative interior of the face $\CP_i$, that is, in $\CP_i\setminus\left(\cup_{j\ne i}\CP_j\right)$, there exists $\eps > 0$ such that $w + \eps n_i \in \CP$. But the domain $\CP$ is obtained from the orthant $S = \BR^d_+$ by the linear transformation~\eqref{transform}. So we have: $w = A^{-1/2}z$ for some $z$ in the relative interior $S_i\setminus(\cup_{j\ne i}S_j)$ of the face $S_i$ of $\pa S$. We must have $w + \eps n_i \in \CP$. But 
$$
w + \eps n_i = A^{-1/2}\left(z + \eps k_iAe_i\right),\ \ \ \mbox{and}\ \ \  \CP = \{A^{-1/2}x\mid x \in S\}.
$$
Therefore, $w + \eps n_i \in \CP\ \Lra\ z + \eps k_iAe_i \in S$. Since $z \in S_i$, we have: $z_i = 0$, and $(Ae_i)_i = a_{ii} > 0$. But $z_i + \eps k_i(Ae_i)_i = (z + \eps k_iAe_i)_i \ge 0$, so we must have: $k_i \ge 0$. Now, let us find $|k_i|$ using the fact that $\norm{n_i} = 1$. Since the matrix $A^{1/2}$ is symmetric, we have: 
$$
\norm{A^{1/2}e_i} = \left[A^{1/2}e_i\cdot A^{1/2}e_i\right]^{1/2} = \left[A^{1/2}(A^{1/2}e_i)\cdot e_i\right]^{1/2} = \left[Ae_i\cdot e_i\right]^{1/2} = a_{ii}^{1/2}.
$$
But $\norm{n_i} = 1$, and $n_i = k_iA^{1/2}e_i$. So $|k_i|a_{ii}^{1/2} = 1$, and $|k_i| = a_{ii}^{-1/2}$. Earlier, we proved that $k_i \ge 0$. Therefore, $k_i = a_{ii}^{-1/2}$, which proves~\eqref{ni}. Now, let us show~\eqref{vi}. The $i$th column of $A^{-1/2}R$ is equal to $A^{-1/2}Re_i$. Using the fact that the matrix $A^{1/2}$ is symmetric, we have:  
\begin{align*}
A^{-1/2}Re_i\cdot n_i &= A^{-1/2}Re_i\cdot a_{ii}^{-1/2}A^{1/2}e_i = a_{ii}^{-1/2}A^{1/2}A^{-1/2}Re_i\cdot e_i \\ &= a_{ii}^{-1/2}Re_i\cdot e_i = a_{ii}^{-1/2}r_{ii} = a_{ii}^{-1/2}.
\end{align*}
Therefore, the standard normalization defined in Remark~\ref{normalrmk} leads to 
$$
v_i := a_{ii}^{1/2}A^{-1/2}Re_i,\ \ i = 1, \ldots, d,
$$
which proves~\eqref{vi}. 
Now, let us show that the Assumption~\ref{Assumption}(ii) and (iii) is satisfied. Note that the matrix $A^{1/2}$ is symmetric, so for every $i, j = 1, \ldots, d$ we have:
\begin{align*}
v_i\cdot n_j =& a_{ii}^{1/2}a_{jj}^{-1/2}A^{-1/2}Re_i\cdot A^{1/2}e_j = 
a_{ii}^{1/2}a_{jj}^{-1/2}A^{1/2}A^{-1/2}Re_i\cdot e_j \\ &= 
a_{ii}^{1/2}a_{jj}^{-1/2}Re_i\cdot e_j = a_{ii}^{1/2}a_{jj}^{-1/2}r_{ij}.
\end{align*}
Fix a nonempty subset $I \subseteq \{1, \ldots, d\}$ with $|I| = p$. Since the matrix $R$ is completely-$\CS$, the submatrix $[R]_I$ is an $\CS$-matrix. There exist positive numbers $\al_i$, $i \in I$, such that 
$\sum_{j\in I}r_{ij}\al_j > 0$ for $i \in I$. Take 
$n = \sum_{j \in I}a_{jj}^{1/2}\al_jn_j$. This is a positive linear combination of $n_j$, $j \in I$, and $v_i\cdot n = \sum_{j \in I}a_{ii}^{1/2}r_{ij}\al_j > 0$ for $i \in I$. This proves Assumption~\ref{Assumption}(iii). Similarly, the transposed matrix $R'$ is also completely-$\CS$ (this follows from Lemma~\ref{special}(ii)), so repeating this argument with $R'$ in place of $R$, we can prove Assumption~\ref{Assumption}(ii). 
\end{proof}

\subsection{Skew-symmetry condition}

Consider a reflected Brownian motion in a general convex polyhedron in general dimension $d \ge 2$. Then a sufficient condition for a.s. not hitting non-smooth parts of the boundary is given by \cite[Theorem 1.1]{Wil1987}. It is called the {\it skew-symmetry condition}. In the subsequent exposition, we define this condition in~\eqref{SSgeneral}, and show that it is equivalent (under the linear transformation~\eqref{transform}) to the skew-symmetry condition~\eqref{SS}. This is the reason why these two conditions have the same name. 

\begin{defn} Consider an $\SRBM^d(\CP, R, \mu, A)$ with $\mu = 0$ and $A = I_d$. Suppose the matrix $R$ is normalized, as described in Remark~\ref{normalrmk}. We say that the {\it skew-symmetry condition} holds if
\begin{equation}
\label{SSgeneral}
n_i\cdot q_j + n_j\cdot q_i = 0,\ \ 1 \le i,\, j \le m.
\end{equation}
\end{defn}

This justifies the name of this condition: the matrix $(n_i\cdot q_j)_{1 \le i, j \le m}$ must be skew-symmetric. 

We say that an SRBM $Z = (Z(t), t \ge 0)$ {\it hits non-smooth parts of the boundary} $\pa\CP$ at time $t > 0$ if there exist  $1 \le i < j \le m$ such that $Z(t) \in \CP_i\cap\CP_j$. This is a generalization of the concept of an SRBM in the orthant hitting non-smooth parts of the boundary. For an SRBM in a two-dimensional wedge, this is equivalent to hitting the corner of the wedge (the origin): a process $Z = (Z(t), t \ge 0)$ with values in this wedge {\it hits the corner} at time $t > 0$ if $Z(t) = 0$. 

\begin{prop} Under Assumption~\ref{Assumption} and the skew-symmetry condition~\eqref{SSgeneral}, an $\SRBM^d(\CP, R, \mu, A)$
starting from some point $x \in \CP\setminus\pa\CP$ in the interior of the polyhedral domain $\CP$ a.s. does not hit non-smooth parts of the boundary at any time $t > 0$. 
\label{generalnonsmooth}
\end{prop}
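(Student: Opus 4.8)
The plan is to reduce the statement to \cite[Theorem 1.1]{Wil1987}, which proves exactly this non-attainability property, but for a reflected Brownian motion defined as a solution of the submartingale problem rather than in the semimartingale form of Definition~\ref{semimartpolyhedron}. Since the skew-symmetry condition~\eqref{SSgeneral} is stated for the normalized data with covariance matrix $I_d$, I work throughout with $A = I_d$. First I would dispose of the drift: the argument of Lemma~\ref{girsanov} transfers verbatim to the polyhedral domain $\CP$, because the Girsanov change of measure alters only the law of the driving Brownian motion $B$ on each finite horizon $[0, T]$, leaving the reflection mechanism $Z(t) = B(t) + RL(t)$ intact. Hence on $[0, T]$ the laws of $\SRBM^d(\CP, R, \mu, I_d)$ and $\SRBM^d(\CP, R, 0, I_d)$ started from the same interior point $x$ are mutually absolutely continuous, so the event ``$Z(t) \in \CP_i \cap \CP_j$ for some $t \in (0, T]$ and some $i < j$'' is null for one process iff it is null for the other; letting $T \to \infty$ shows it suffices to prove the claim for $\mu = 0$.

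Next I would bridge the gap between the two definitions of a reflected Brownian motion using Lemma~\ref{submart}: the process $Z = \SRBM^d(\CP, R, 0, I_d)$, started at $x \in \CP \setminus \pa\CP$, is a solution to the submartingale problem associated with $(\CP, R, 0, I_d)$. Under Assumption~\ref{Assumption} this submartingale problem has a unique solution in law (the same uniqueness that underlies Proposition~\ref{existencepolyhedra}), and the reflected Brownian motion built in \cite{Wil1987} is such a solution; therefore $Z$ and the process of \cite{Wil1987} agree in law. The skew-symmetry condition~\eqref{SSgeneral} is precisely the hypothesis $n_i \cdot q_j + n_j \cdot q_i = 0$ imposed there, and ``hitting non-smooth parts of the boundary'' in the sense defined above coincides with attainability of $\cup_{i < j}(\CP_i \cap \CP_j)$ considered there. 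Consequently \cite[Theorem 1.1]{Wil1987} applies and yields that $Z$, started from an interior point, a.s. never reaches the non-smooth parts of $\pa\CP$; transporting this back through the absolute continuity of the first step gives the claim for general $\mu$.

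The main obstacle is essentially one of matching hypotheses and normalizations to those of \cite{Wil1987}: one must verify that the geometric data extracted in Remark~\ref{std} and Lemma~\ref{lemmatr} (the normalized reflection vectors $\ol{v}_i = n_i + q_i$ and the inward normals $n_i$) are exactly the objects to which \cite{Wil1987} attaches the skew-symmetry condition, and that Assumption~\ref{Assumption} supplies all the regularity of $\CP$ and of the reflection field required there. The subtler point is ensuring that being \emph{a} submartingale solution (Lemma~\ref{submart}) suffices to inherit the non-attainability conclusion, which relies on uniqueness in law of the submartingale problem; if instead \cite[Theorem 1.1]{Wil1987} is proved for every submartingale solution, uniqueness can be avoided altogether. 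Finally, the drift-removal step must be justified at the level of the reflected process on each finite horizon, not merely for free Brownian motion, but this is exactly what the Girsanov argument of Lemma~\ref{girsanov} provides.
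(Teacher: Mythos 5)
Your proposal is correct and follows essentially the same route as the paper, whose proof of Proposition~\ref{generalnonsmooth} is exactly the chain Lemma~\ref{submart} (the semimartingale SRBM solves the submartingale problem) plus Proposition~\ref{existencepolyhedra} (existence/uniqueness under Assumption~\ref{Assumption}) plus \cite[Theorem 1.1]{Wil1987}. Your explicit attention to the uniqueness-in-law identification and the preliminary Girsanov drift removal are harmless elaborations of steps the paper leaves implicit (the drift is already zero when the proposition is invoked in Lemma~\ref{first}).
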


\begin{proof}
Follows from Lemma~\ref{submart}, Proposition~\ref{existencepolyhedra} 
and \cite[Theorem 1.1]{Wil1987}.
\end{proof}

The following lemma shows the equivalence of the two forms~\eqref{SS} and~\eqref{SSgeneral} of the skew-symmetry condition under the linear transformation~\eqref{transform}. 

\begin{lemma} Consider the process $Z = (Z(t), t \ge 0) = \SRBM^d(R, \mu, A)$. Let $\ol{Z}$ be the process defined by~\eqref{transform}. Then 
the skew-symmetry condition in the form~\eqref{SS} holds for $Z$ if and only if the skew-symmetry condition in the form~\eqref{SSgeneral} holds for $\ol{Z}$. 
\label{equiv}
\end{lemma}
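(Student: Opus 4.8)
The plan is to reduce the claimed equivalence to a single direct computation, using the explicit formulas for the normalized reflection vectors $v_i$ and the inward unit normals $n_i$ supplied by Lemma~\ref{lemmatr}(ii). Recall from there that, after the standard normalization, $n_i = a_{ii}^{-1/2}A^{1/2}e_i$ and $v_i = a_{ii}^{1/2}A^{-1/2}Re_i$, and that $v_i\cdot n_j = a_{ii}^{1/2}a_{jj}^{-1/2}r_{ij}$ for all $i,j$. The tangential component entering~\eqref{SSgeneral} is $q_i = v_i - n_i$, which is exactly the decomposition~\eqref{decompos}; hence the condition~\eqref{SSgeneral} reads $n_i\cdot(v_j - n_j) + n_j\cdot(v_i - n_i) = 0$ for all $1 \le i, j \le d$.

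First I would compute the inner product of two normals. Since $A^{1/2}$ is symmetric, $n_i\cdot n_j = a_{ii}^{-1/2}a_{jj}^{-1/2}\bigl(A^{1/2}e_i\bigr)\cdot\bigl(A^{1/2}e_j\bigr) = a_{ii}^{-1/2}a_{jj}^{-1/2}(Ae_i\cdot e_j) = a_{ii}^{-1/2}a_{jj}^{-1/2}a_{ij}$, where the last equality uses the symmetry of $A$. Next, using the formula for $v_i\cdot n_j$ together with its index-interchanged version $v_j\cdot n_i = a_{jj}^{1/2}a_{ii}^{-1/2}r_{ji}$, I would substitute all four pieces into $n_i\cdot q_j + n_j\cdot q_i$ and collect terms. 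Pulling out the common factor $a_{ii}^{-1/2}a_{jj}^{-1/2}$, the left-hand side becomes
$$
n_i\cdot q_j + n_j\cdot q_i = a_{ii}^{-1/2}a_{jj}^{-1/2}\left(a_{ii}r_{ij} + a_{jj}r_{ji} - 2a_{ij}\right).
$$

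Since $A$ is positive definite, every diagonal entry $a_{ii}$ is strictly positive, so the scalar prefactor $a_{ii}^{-1/2}a_{jj}^{-1/2}$ is nonzero. Therefore, for each pair $(i,j)$ the quantity $n_i\cdot q_j + n_j\cdot q_i$ vanishes if and only if the bracket $a_{ii}r_{ij} + a_{jj}r_{ji} - 2a_{ij}$ does, and the bracket vanishing for all $i,j$ is precisely the orthant skew-symmetry condition~\eqref{SS}. This yields the desired equivalence.

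The computation is entirely routine, so I do not anticipate a serious obstacle; the only points requiring care are the bookkeeping of the index interchange $i\leftrightarrow j$ (distinguishing $v_i\cdot n_j$ from $v_j\cdot n_i$), and the appeal to positive definiteness of $A$ to guarantee that the prefactor is nonzero, which is what upgrades the calculation from a one-way implication to a genuine \emph{if and only if}. It is also worth recording that the correspondence between the $d$ faces $S_i$ of the orthant and the $d$ faces $\CP_i$ of the polyhedron $\CP$ is index-preserving — exactly as arranged in Lemma~\ref{lemmatr}(i) — so that the pairs $(i,j)$ appearing in~\eqref{SS} and in~\eqref{SSgeneral} match up without any relabeling.
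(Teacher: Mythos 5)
Your strategy is exactly the paper's: write $q_i = v_i - n_i$, expand $n_i\cdot q_j + n_j\cdot q_i = n_i\cdot v_j + n_j\cdot v_i - 2\,n_i\cdot n_j$, evaluate each dot product using~\eqref{vi} and~\eqref{ni}, and observe that the nonzero prefactor $a_{ii}^{-1/2}a_{jj}^{-1/2}$ makes the computation reversible. However, there is a concrete index error in your key step. You take from Lemma~\ref{lemmatr}(ii) the formula $v_i\cdot n_j = a_{ii}^{1/2}a_{jj}^{-1/2}r_{ij}$ at face value, but that formula is itself mistyped in the paper: since $Re_i$ is the $i$th \emph{column} of $R$, one has $(Re_i)\cdot e_j = r_{ji}$, so the correct value is $v_i\cdot n_j = a_{ii}^{1/2}a_{jj}^{-1/2}r_{ji}$ (equivalently, $n_i\cdot v_j = a_{ii}^{-1/2}a_{jj}^{1/2}\,e_i\cdot Re_j = a_{ii}^{-1/2}a_{jj}^{1/2}r_{ij}$, which is what the paper's own proof of the present lemma computes from scratch). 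Propagating your version through the expansion yields
\begin{equation*}
n_i\cdot q_j + n_j\cdot q_i = a_{ii}^{-1/2}a_{jj}^{-1/2}\left(a_{ii}r_{ij} + a_{jj}r_{ji} - 2a_{ij}\right),
\end{equation*}
whereas the correct bracket is $a_{jj}r_{ij} + a_{ii}r_{ji} - 2a_{ij}$, i.e.\ precisely the expression appearing in~\eqref{SS}. The two brackets are genuinely different conditions on $(R,A)$: for a fixed pair $i\ne j$ they are distinct linear constraints on $(r_{ij},r_{ji})$ unless $a_{ii}=a_{jj}$ or $r_{ij}=r_{ji}$, so as written your argument proves the equivalence of~\eqref{SSgeneral} with a condition that is not~\eqref{SS}.

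The fix is purely local: recompute $n_i\cdot v_j$ directly as $a_{ii}^{-1/2}a_{jj}^{1/2}\bigl(e_i\cdot A^{1/2}A^{-1/2}Re_j\bigr) = a_{ii}^{-1/2}a_{jj}^{1/2}r_{ij}$ and likewise $n_j\cdot v_i = a_{jj}^{-1/2}a_{ii}^{1/2}r_{ji}$, rather than importing the transposed formula. With that correction your proof coincides with the paper's. Everything else — the computation $n_i\cdot n_j = a_{ii}^{-1/2}a_{jj}^{-1/2}a_{ij}$, the appeal to positive definiteness of $A$ to get a nonzero prefactor and hence a two-way equivalence, and the remark that the face indexing is preserved under the map $S_i\mapsto\CP_i$ — is correct and is the right thing to say.
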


\begin{proof} Suppose~\eqref{SS} is true. Using~\eqref{vi}, ~\eqref{ni} and the fact that $v_i = n_i + q_i,\ i = 1, \ldots, m$ (in this case $m = d$), we have:
\begin{align*}
n_i\cdot q_j &+ n_j\cdot q_i = n_i\cdot(v_j - n_j) + n_j\cdot (v_i - n_i) = n_i\cdot v_j - n_j \cdot v_i - 2n_i\cdot n_j \\ &= a_{ii}^{-1/2}A^{1/2}e_i\cdot a_{jj}^{1/2}A^{-1/2}Re_j + a_{jj}^{-1/2}A^{1/2}e_j\cdot a_{ii}^{1/2}A^{-1/2}Re_i - 2a_{ii}^{-1/2}a_{jj}^{-1/2}A^{1/2}e_i\cdot A^{1/2}e_j.
\end{align*}
Since the matrix $A^{1/2}$ is symmetric, we have:
\begin{align*}
a_{ii}^{-1/2}A^{1/2}e_i&\cdot a_{jj}^{1/2}A^{-1/2}Re_j = a_{ii}^{-1/2}a_{jj}^{1/2}\left(e_i\cdot A^{1/2}A^{-1/2}Re_j\right) \\ &= 
a_{ii}^{-1/2}a_{jj}^{1/2}\left(e_i\cdot Re_j\right) = a_{ii}^{-1/2}a_{jj}^{1/2}r_{ij},
\end{align*}
similarly
$$
a_{jj}^{-1/2}A^{1/2}e_j\cdot a_{ii}^{1/2}A^{-1/2}Re_i =  a_{jj}^{-1/2}a_{ii}^{1/2}r_{ji},
$$
and finally
\begin{align*}
a_{ii}^{-1/2}a_{jj}^{-1/2}A^{1/2}e_i&\cdot A^{1/2}e_j = a_{ii}^{-1/2}a_{jj}^{-1/2}\left(e_i\cdot A^{1/2}A^{1/2}e_j\right) \\ &= 
a_{ii}^{-1/2}a_{jj}^{-1/2}\left(e_i\cdot Ae_j\right) = a_{ii}^{-1/2}a_{jj}^{-1/2}a_{ij}.
\end{align*}
Therefore,
\begin{align*}
n_i\cdot q_j &+ n_j\cdot q_i = 
a_{ii}^{-1/2}a_{jj}^{1/2}r_{ij} + a_{jj}^{-1/2}a_{ii}^{1/2}r_{ji} - 2a_{ii}^{-1/2}a_{jj}^{-1/2}a_{ij} \\ &= a_{ii}^{-1/2}a_{jj}^{-1/2}\left[r_{ij}a_{jj} + r_{ji}a_{ii} - 2a_{ij}\right] = 0.
\end{align*}
The converse statement is proved similarly. 
\end{proof}

\subsection{An SRBM in a two-dimensional wedge}

A particular case of a polyhedral domain is a {\it two-dimensional wedge} (see Fig. 1), considered in \cite{VW1985}, \cite{Wil1985a}, \cite{Wil1985b}, \cite{Wil1987b}:
$$
\CV := \{(r\cos\ta, r\sin\ta)\mid 0 \le r < \infty,\ \xi_2 \le \ta \le \xi_1\}.
$$
Here, $\xi_2 < \xi_1 < \xi_2 + \pi$. Its {\it angle} is defined as $\xi := \xi_1 - \xi_2$. Its boundary $\pa\CV$ consists of two edges 
$$
\CV_i := \{(r\cos\xi_i, r\sin\xi_i)\mid 0 \le r < \infty\},\ \ i = 1, 2.
$$
 The edge  $\CV_1$ is called the {\it upper edge}, and the edge $\CV_2$ is called the {\it lower edge}. The difference between them is as follows: the shorter way to rotate $\CV_1$ to get $\CV_2$ is clockwise rather than counterclockwise. On each edge $\CV_i$, there is a reflection vector $v_i$, which forms the angle $\theta_i \in (-\pi/2, \pi/2)$ with the inward unit normal vector $n_i$. 

These angles are signed: positive angles $\theta_1$, $\theta_2$ are measured toward the vertex of $\CV$ (the origin). In other words, $\theta_1$ is the angle between $n_1$ and $v_1$, measured clockwise in the direction from $n_1$ to $v_1$. This means the following: if the shorter way to rotate the direction of $n_1$ to get the direction of $v_1$ is clockwise, then $\theta_1 > 0$; and if it is counterclockwise, then $\theta_1 < 0$. If $v_1$ and $n_1$ have the same direction, then $\theta_1 = 0$. Simlarly, $\theta_2$ is the angle between $n_2$ and $v_2$, measured counterclockwise from $n_2$ to $v_2$. 

We are interested in whether a reflected Brownian motion with zero drift vector and identity covariance matrix in this wedge hits the corner. A necessary and sufficient condition is established in \cite[Theorem 2.2]{VW1985}. 

\begin{figure}[t]
\centering
\begin{tikzpicture}
\draw  (10, 5)-- (0,0) -- (12, 0);
\draw [ ->] (6, 0)  -- (4.5, 1) node[anchor=east]{$v_2$};
\draw [ ->] (8, 4) -- (8, 1.5) node[anchor = south east]{$v_1$};
\draw [->] (6, 0) -- (6, 1.3) node[anchor = west]{$n_2$};
\draw [->] (8, 4) -- (8.7, 3) node[anchor = north]{$n_1$};
\draw  (1.5, 0) arc [start angle=0, end angle=26, radius=1.5]node[anchor = north west]{\ $\xi$};
\draw [ ->] (6, 0.5) arc [start angle=90, end angle=144, radius=0.5]node[anchor =  west]{\ \ \ $\theta_2$};
\draw [ <-] (8.02, 3.2) arc [start angle=270, end angle=305, radius=0.8]node[anchor = south]{\ \  $\theta_1$};
\draw (3, -1) node[anchor = west]{\textsc{Figure 1.} A two-dimensional wedge.};
\draw (1, -1.5) node[anchor = west]{Angles $\theta_1$ and $\theta_2$ are counted toward the vertex of the wedge};
\draw (1, -2) node[anchor = west]{Here, $n_1$ and $n_2$ are normal vectors, $v_1$ and $v_2$ are reflection vectors};
\draw (11, 0) node[anchor = south]{$\mathcal V_2$};
\draw (10, 5) node[anchor = north west]{$\mathcal V_1$};

\end{tikzpicture}
\end{figure}

\begin{prop} Consider an SRBM $Z = (Z(t), t \ge 0)$ in the wedge $\CV$ with $\mu = 0$ and $A = I_2$, starting from a point $x \in \CV\setminus\pa \CV$. 

(i) If $\ta_1 + \ta_2 > 0$, then a.s. there exists $t > 0$ such that $Z(t) = 0$.

(ii) If $\ta_1 + \ta_2 \le 0$, then a.s. there does not exist $t > 0$ such that $Z(t) = 0$. 
\label{wedge}
\end{prop}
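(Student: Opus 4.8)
The plan is to reduce the statement to the classical corner-hitting result of Varadhan and Williams, cited above as \cite[Theorem 2.2]{VW1985}, and then merely translate parameters. The only substantive point is to reconcile the two notions of reflected Brownian motion in play: the \emph{semimartingale} SRBM of Definition~\ref{semimartpolyhedron} used in this paper, and the \emph{submartingale solution} used in \cite{VW1985}.

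First I would record that the wedge $\CV$ is a convex polyhedral domain in the sense of~\eqref{CPgen}, with $d = 2$ and $m = 2$ faces $\CV_1, \CV_2$, inward unit normals $n_1, n_2$, and reflection vectors $v_1, v_2$. The hypothesis furnishes the semimartingale SRBM $Z = \SRBM^2(\CV, R, 0, I_2)$. Applying Lemma~\ref{submart} to $Z$ shows that $Z$ is also a solution of the submartingale problem associated with $(\CV, R, 0, I_2)$ in the sense of Definition~\ref{defnsubmart}. Since the submartingale problem for the wedge is well posed, its solution being unique in law (cf.\ Proposition~\ref{existencepolyhedra}, and \cite{VW1985} itself), the law of $Z$ coincides with the law of the process analysed in \cite{VW1985}. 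In particular, the event ``$Z$ hits the corner'', namely $\{\exists\, t > 0:\ Z(t) = 0\}$, carries the same probability under both descriptions.

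Next I would invoke \cite[Theorem 2.2]{VW1985}, which governs corner-hitting through the single parameter
$$
\alpha := \frac{\ta_1 + \ta_2}{\xi},
$$
where $\xi = \xi_1 - \xi_2 \in (0, \pi)$ is the angle of the wedge and $\ta_1, \ta_2 \in (-\pi/2, \pi/2)$ are the signed reflection angles defined above, measured toward the vertex. The Varadhan--Williams dichotomy asserts that a process started at an interior point $x \ne 0$ reaches the corner almost surely when $\alpha > 0$, and almost surely never reaches it when $\alpha \le 0$; the boundary case $\alpha = 0$ falling on the non-hitting side is precisely the skew-symmetric case already visible in Proposition~\ref{generalnonsmooth}. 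Because $\xi > 0$, the sign of $\alpha$ equals the sign of $\ta_1 + \ta_2$, so $\alpha > 0 \Leftrightarrow \ta_1 + \ta_2 > 0$ and $\alpha \le 0 \Leftrightarrow \ta_1 + \ta_2 \le 0$. Substituting this equivalence into the two cases of the dichotomy yields exactly statements (i) and (ii).

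The main obstacle I anticipate is not analytic but one of bookkeeping: one must check that the signed-angle conventions of Figure~1 (with $\ta_1$ measured clockwise from $n_1$ to $v_1$ and $\ta_2$ counterclockwise from $n_2$ to $v_2$, both positive toward the vertex) agree with the convention under which \cite[Theorem 2.2]{VW1985} is phrased, so that $\alpha$ denotes the same object in both places; and that the restriction $|\ta_i| < \pi/2$ together with the assumed semimartingale existence confines us to the range $\alpha < 2$ in which \cite[Theorem 2.2]{VW1985} is applicable. Both are routine verifications given the explicit geometry of the transformed orthant in Lemma~\ref{lemmatr}, and once they are settled the proposition follows.
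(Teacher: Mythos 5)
Your proposal is correct and follows essentially the same route as the paper, which proves this proposition by combining Lemma~\ref{submart} (to pass from the semimartingale SRBM to the submartingale problem), Proposition~\ref{existencepolyhedra} (for well-posedness, hence identification of laws), and \cite[Theorem 2.2]{VW1985} (for the corner-hitting dichotomy in terms of $\alpha = (\ta_1+\ta_2)/\xi$). The extra bookkeeping you flag about sign conventions and the range $\alpha < 2$ is indeed the only thing left to check, and it is routine.
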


\begin{proof} Follows from Lemma~\ref{submart}, Proposition~\ref{existencepolyhedra}, and Theorem 2.2 from \cite{VW1985}.
\end{proof}

In the case of two dimensions, $d = 2$, the linear transformation~\eqref{transform} leads to an SRBM in a two-dimensional wedge with identity covariance matrix. In the following lemma, we explicitly calculate the parameters of this SRBM: the angle $\xi$ of this wedge and the two angles $\theta_1$, $\theta_2$ of reflection. 

\begin{lemma} Suppose $Z = \SRBM^2(R, 0, A)$ and $\ol{Z}$ is the process defined by~\eqref{transform}. Then the polyhedral domain $\CP$ is in fact a wedge $\CV$ with the angle 
\begin{equation}
\label{xi}
\xi = \arccos\left[-\frac{a_{12}}{\sqrt{a_{11}a_{22}}}\right].
\end{equation}
The process $\ol{Z}$ is an SRBM in $\CV$ with zero drift vector, identity covariance matrix and the angles of reflection
\begin{equation}
\label{ta1}
\ta_1 = \arcsin\frac{a_{12} - a_{11}r_{21}}{\sqrt{a_{11}\left(a_{11}r_{21}^2 - 2a_{12}r_{21} + a_{22}\right)}},
\end{equation}
\begin{equation}
\label{ta2}
\ta_2 = \arcsin\frac{a_{12} - a_{22}r_{12}}{\sqrt{a_{22}\left(a_{22}r_{12}^2 - 2a_{12}r_{12} + a_{11}\right)}}.
\end{equation}
\end{lemma}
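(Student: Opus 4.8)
The plan is to read everything off the explicit formulas \eqref{vi} and \eqref{ni} for the reflection vectors $v_i = a_{ii}^{1/2}A^{-1/2}Re_i$ and the inward unit normals $n_i = a_{ii}^{-1/2}A^{1/2}e_i$ already obtained in Lemma~\ref{lemmatr}, specialized to $d = 2$. First I would compute the angle $\xi$. The edges $\CP_1, \CP_2$ of the wedge $\CP = A^{-1/2}S$ are the images of $S_1$ and $S_2$, hence are spanned by $A^{-1/2}e_2$ and $A^{-1/2}e_1$ respectively, so $\xi$ is the angle between $A^{-1/2}e_1$ and $A^{-1/2}e_2$. Using $A^{-1/2}e_i\cdot A^{-1/2}e_j = e_i\cdot A^{-1}e_j = (A^{-1})_{ij}$ together with the explicit inverse $A^{-1} = (\det A)^{-1}\left(\begin{smallmatrix} a_{22} & -a_{12} \\ -a_{12} & a_{11}\end{smallmatrix}\right)$, the factor $\det A = a_{11}a_{22} - a_{12}^2 > 0$ cancels and yields $\cos\xi = -a_{12}/\sqrt{a_{11}a_{22}}$, which is \eqref{xi}; positive definiteness of $A$ gives $|a_{12}| < \sqrt{a_{11}a_{22}}$, so $\xi \in (0,\pi)$ is well defined.

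For the reflection angles I would first note that standard normalization gives $v_i\cdot n_i = 1$ and $\norm{n_i} = 1$, hence $\cos\ta_i = 1/\norm{v_i} > 0$, which is consistent with $\ta_i \in (-\pi/2, \pi/2)$. It then remains to identify $\sin\ta_i$ as a signed $2\times 2$ determinant $\det[n_i, v_i]$ (the determinant of the matrix with columns $n_i$ and $v_i$) divided by $\norm{v_i}$, with the sign fixed by the orientation convention of Figure 1 ($\ta_1$ measured clockwise, $\ta_2$ counterclockwise from $n_i$ to $v_i$). The key simplification is to write $A^{1/2}e_i = A^{-1/2}(Ae_i)$, so that both columns of $[n_i, v_i]$ carry a factor $A^{-1/2}$; factoring it out yields $\det[n_i, v_i] = (\det A)^{-1/2}\det[Ae_i, Re_i]$. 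Evaluating the remaining determinant on the $i$th columns $Ae_i$ of $A$ and $Re_i$ of $R$ gives $a_{11}r_{21} - a_{12}$ for $i = 1$ and $a_{12} - a_{22}r_{12}$ for $i = 2$, which are exactly the numerators of \eqref{ta1} and \eqref{ta2} up to sign.

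Separately I would compute $\norm{v_i}^2 = a_{ii}\,(Re_i\cdot A^{-1}Re_i)$ using the same explicit inverse, obtaining $\norm{v_i} = (\det A)^{-1/2}\sqrt{a_{ii}\,(a_{ii}r_{\bar\imath i}^2 - 2a_{12}r_{\bar\imath i} + a_{\bar\imath\bar\imath})}$, where $\{i,\bar\imath\} = \{1,2\}$; this is precisely the radicand in the denominators of \eqref{ta1} and \eqref{ta2}. Dividing the cross product by $\norm{v_i}$, the common factor $(\det A)^{-1/2}$ cancels and I recover the stated arcsine expressions. As an internal check of consistency one verifies $\norm{v_i}^2 - 1 = (a_{ii}r_{\bar\imath i} - a_{12})^2/\det A$, which must equal $\det[n_i, v_i]^2$ since $v_i - n_i$ is orthogonal to the unit vector $n_i$; this confirms the magnitudes match before fixing signs.

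The main obstacle I anticipate is not computation but signs: pinning down the orientation of the wedge and reconciling it with the opposite clockwise/counterclockwise conventions for $\ta_1$ and $\ta_2$ in Figure 1. I would resolve this by observing that $A^{-1/2}$ is symmetric positive definite, hence orientation preserving, so it carries the counterclockwise boundary order of the first quadrant $S$ (in which $S_2$ is the lower and $S_1$ the upper edge) to that of $\CP$, forcing $\CP_1$ to be the upper edge and $\CP_2$ the lower. The opposite orientation conventions for $\ta_1$ and $\ta_2$ then attach opposite signs to the single fixed (counterclockwise) area form, which is exactly why the numerator comes out as $a_{12} - a_{11}r_{21}$ for $\ta_1$ but as $a_{12} - a_{22}r_{12}$ (rather than its negative) for $\ta_2$. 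The converse half of the final claim in the lemma is then immediate, since every step above is an equality.
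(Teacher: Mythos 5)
Your proposal is correct and follows essentially the same route as the paper: apply the linear transformation, read off the normals and reflection directions from Lemma~\ref{lemmatr}, compute $\cos\xi$ via the explicit $A^{-1}$, and obtain $\sin\ta_i$ as a normalized signed two-dimensional cross product with the signs fixed by the fact that the symmetric positive definite map $A^{-1/2}$ preserves orientation. The only (cosmetic) difference is that you evaluate the cross product by factoring $\det(A^{-1/2})$ out of $\det[n_i,v_i]$ and work with the normalized reflection vectors, whereas the paper rotates the edge direction explicitly and reduces the cross product to a dot product with the unnormalized $v_i=A^{-1/2}r_i$; both yield the same angles since the vectors differ by positive scalars.
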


\begin{proof} First, note that $A^{-1/2}$ is a positive definite matrix, so it has a positive determinant. Therefore, the linear transformation~\eqref{transform} preserves the orientation of the plane $\BR^2_+$. The edges of this wedge are
$$
\mathcal V_i := A^{-1/2}S_i \equiv \{A^{-1/2}z \mid z \in S_i\},\ \ i = 1, 2. 
$$
In fact, $\mathcal V_1$ is the upper edge, and $\mathcal V_2$ is the lower edge. Indeed, for the original quadrant $S = \BR^2_+$, the edge $S_1 = \{x \in S\mid x_1 = 0\}$ is the upper edge, and the edge $S_2 = \{x \in S\mid x_2 = 0\}$ is the lower edge: in other words, the shorter way to rotate $S_1$ to get $S_2$ is clockwise rather than counterclockwise. But under the transformation~\ref{lineartransf}, $S_1$ is mapped to $\mathcal V_1$, and $S_2$ is mapped to $\CV_2$. This linear transformation preserves the orientation. Therefore, the shorter way to rotate $\CV_1$ to get $\CV_2$ is also clockwise rather than counterclockwise.  
The edge $\mathcal V_1$ has a directional vector $c_2 = A^{-1/2}e_2$, while the edge $\mathcal V_2$ has a directional vector $c_1 = A^{-1/2}e_1$. 
An important remark: consider the notation $\CP_i,\ i = 1, \ldots, d$, for edges of the polyhedron from Lemma~\ref{lemmatr}. Then our current notation $\CV_1$ and $\CV_2$ is consistent with this notation in the sense that 
\begin{equation}
\label{consistent}
\CV_1 = \CP_1\ \ \mbox{and}\ \ \CV_2 = \CP_2.
\end{equation}
The angle $\xi$ of the wedge is the angle between the edges $\mathcal V_1$ and $\mathcal V_2$. So $\xi$ is the angle between two vectors $c_1 = A^{-1/2}e_1$ and $c_2 = A^{-1/2}e_2$. Since the matrix $A^{-1/2}$ is symmetric, we have:
\begin{align*}
\cos\xi =& \frac{A^{-1/2}e_1\cdot A^{-1/2}e_2}{\norm{A^{-1/2}e_1}\norm{A^{-1/2}e_2}} = \frac{(A^{-1/2})^2e_1\cdot e_2}{\left[(A^{-1/2})^2e_1\cdot e_1\right]^{1/2}\left[(A^{-1/2})^2e_2\cdot e_2\right]^{1/2}} \\ &= \frac{A^{-1}e_1\cdot e_2}{\left[A^{-1}e_1\cdot e_1\right]^{1/2}\left[A^{-1}e_2\cdot e_2\right]^{1/2}}
= \frac{(A^{-1})_{12}}{(A^{-1})_{11}^{1/2}(A^{-1})_{22}^{1/2}}.
\end{align*}
But 
\begin{equation}
\label{inverse}
A^{-1} = \frac1{a_{11}a_{22} - a_{12}^2}
\begin{bmatrix}
a_{22} & -a_{12}\\
-a_{12} & a_{11}
\end{bmatrix}
\end{equation}
Therefore,
$$
\cos\xi = -\frac{a_{12}}{\sqrt{a_{11}a_{22}}},
$$
and we get~\eqref{xi}. Let us find the reflection angles $\ta_1$ and $\ta_2$. For the quadrant $S = \BR^2_+$, if we rotate the directional vector $e_2$ of the upper face $S_1$ clockwise by $\pi/2$, we get an inward normal vector to this face. But the linear transformation~\eqref{transform} preserves the orientation, so a similar statement is true for the wedge $\CV$: if we rotate the directional vector $c_2 = A^{-1/2}e_2$ of the upper face $\CV_1$ of the wedge clockwise by $\pi/2$, then we get an inward normal vector 
$$
\mathfrak n_1 \equiv \begin{bmatrix}(\mathfrak n_1)_1\\ (\mathfrak n_1)_2\end{bmatrix} := \begin{bmatrix}(c_2)_2\\ -(c_2)_1\end{bmatrix}
$$
Similarly, if we rotate the vector $c_1 = A^{-1/2}e_1$ by $\pi/2$ counterclockwise, we get an inward normal vector 
$$
\mathfrak n_2 \equiv \begin{bmatrix}(\mathfrak n_2)_1\\ (\mathfrak n_2)_2\end{bmatrix} := 
\begin{bmatrix}-(c_1)_2\\ (c_1)_1\end{bmatrix}
$$
to  $\mathcal V_1$. These are not unit vectors: $\mathfrak n_i \ne n_i$. In fact, $\norm{\mathfrak n_1} = \norm{c_2}$ and $\norm{\mathfrak n_2} = \norm{c_1}$. But $\mathfrak n_1$ has the same direction as $n_1$, and $\mathfrak n_2$ has the same direction as $n_2$. In other words, $\mathfrak n_1 = \norm{\mathfrak n_1}n_1$ and $\mathfrak n_2 = \norm{\mathfrak n_2}n_2$.

From Lemma~\ref{lemmatr} and~\eqref{consistent}, it follows that $v_1 = A^{-1/2}r_1$ and $v_2 = A^{-1/2}r_2$. These vectors are {\it not normalized} in the sense of Remark~\ref{normalrmk}. The angle $\ta_1$ between $n_1$ and $v_1$ has a sign: it is calculated toward the origin, or, in other words, counterclockwise from $n_1$ to $v_1$. But $n_1$ and $\mathfrak n_1$ have the same direction. Therefore, $\ta_1$ can be calculated as the signed angle from $\mathfrak n_1$ to $v_1$ in the counterclockwise direction:
\begin{align*}
\sin\ta_1 =& \frac{(\mathfrak n_1)_1(v_1)_2 - (\mathfrak n_1)_2(v_1)_1}{\norm{\mathfrak n_1}\norm{v_1}} = 
\frac{-(c_2)_2(v_1)_2  - (c_2)_1(v_1)_1}{\norm{c_2}\norm{v_1}} = 
-\frac{c_2\cdot v_1}{\norm{c_2}\norm{v_1}} \\ &= 
-\frac{A^{-1/2}e_2\cdot A^{-1/2}r_1}{\norm{A^{-1/2}e_2}\norm{A^{-1/2}r_1}} = 
-\frac{A^{-1/2}e_2\cdot A^{-1/2}r_1}{\left[A^{-1/2}e_2\cdot A^{-1/2}r_1\right]^{1/2}\left[A^{-1/2}e_2\cdot A^{-1/2}r_1\right]^{1/2}}
\end{align*}
Since the matrix $A^{-1/2}$ is symmetric, the last expression is equal to 
$$
-\frac{A^{-1}e_2\cdot r_1}{\left[A^{-1}e_2\cdot e_2\right]^{1/2}\left[A^{-1}r_1\cdot r_1\right]^{1/2}}.
$$
Using the formula~\eqref{inverse} for $A^{-1}$ and the fact that
$r_1 = (1, r_{21})'$, we have: 
$$
\sin\ta_1 = \frac{a_{12} - a_{11}r_{21}}{\sqrt{a_{11}\left(a_{11}r_{21}^2 - 2a_{12}r_{21} + a_{22}\right)}}.
$$
Similarly, we can calculate the angle $\ta_2$:
$$
\sin\ta_2 = \frac{a_{12} - a_{22}r_{12}}{\sqrt{a_{22}\left(a_{22}r_{12}^2 - 2a_{12}r_{12} + a_{11}\right)}}.
$$
Since $\ta_1, \ta_2 \in (-\pi/2, \pi/2)$, we get~\eqref{ta1} and ~\eqref{ta2}. 
\end{proof}

\subsection{Completion of the proof of Theorem~\ref{SRBMnonsmooth}}

By Lemma~\ref{hit}, without loss of generality we can assume an SRBM starts from some point $x \in S\setminus\pa S$, and $\mu = 0$. First, we prove (i) in the case of the skew-symmetry condition~\eqref{SS}, then move to the general case~\eqref{SSineq}. Then we prove (ii) in the case $d = 2$, and proceed to the case of the general dimension. 

\begin{lemma} Take an SRBM in the orthant $S$, starting from $x \in S\setminus\pa S$. Suppose it satisfies the skew-symmetry condition~\eqref{SS}. Then the statement of Theorem~\ref{SRBMnonsmooth}(i) is true.
\label{first}
\end{lemma}

\begin{proof} Apply the linear transformation~\eqref{transform} to $Z = (Z(t), t \ge 0) = \SRBM^d(R, 0, A)$. By Lemma~\ref{lemmatr}, we get an SRBM $\ol{Z} = (\ol{Z}(t), t \ge 0)$ in the polyhedron $\CS = A^{-1/2}S$, given by~\eqref{CP} with zero drift and identity covariance matrix. It was shown in Lemma~\ref{equiv} that the skew-symmetry condition~\eqref{SSgeneral} is true. Therefore, by Proposition~\ref{generalnonsmooth} the process $\ol{Z}$ a.s. does not hit non-smooth parts of the boundary $\pa\CS$ at any moment $t > 0$. Thus, the process $Z$ a.s. does not hit non-smooth parts of the boundary $\pa S$ at any moment $t > 0$. 
\end{proof}

\begin{lemma} Take an SRBM in the orthant $S$, starting from $x \in S\setminus\pa S$. Suppose it satisfies the condition~\eqref{SSineq}. Then the statement of Theorem~\ref{SRBMnonsmooth}(i) is true.
\end{lemma}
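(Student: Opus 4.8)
The plan is to reduce the general inequality \eqref{SSineq} to the skew-symmetry case \eqref{SS} already handled in Lemma~\ref{first}, by comparing $Z$ with an SRBM whose reflection matrix is skew-symmetric with respect to the same covariance matrix $A$. As in the completion of the proof above, by Lemma~\ref{girsanov} and Lemma~\ref{hit} I may assume $\mu = 0$ and that $Z = \SRBM^d(R, 0, A)$ starts from an interior point $x \in S \setminus \pa S$.

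First I would build the comparison matrix explicitly. Writing $D = \diag(a_{11}, \ldots, a_{dd})$, the condition \eqref{SSineq} reads $RD + DR^{T} \ge 2A$, and since $r_{ii} = 1$ the diagonal entries of $RD + DR^{T}$ already equal $2a_{ii}$; hence the symmetric ``slack'' matrix $2E := RD + DR^{T} - 2A$ is nonnegative and has zero diagonal. I would then set $\tilde R := R - F$, where $F$ is the symmetric matrix with zero diagonal and
\begin{equation*}
f_{ij} = \frac{2E_{ij}}{a_{ii} + a_{jj}} \ge 0, \qquad i \ne j.
\end{equation*}
By construction $F \ge 0$ off the diagonal, so $\tilde R$ is a reflection matrix with $\tilde r_{ij} = r_{ij} - f_{ij} \le r_{ij} \le 0$ for $i \ne j$, i.e.\ $\tilde R \le R$ and $\tilde R$ is a $\CZ$-matrix. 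Moreover $FD + DF^{T} = 2E$, so $\tilde R D + D\tilde R^{T} = (RD + DR^{T}) - 2E = 2A$, which is exactly the skew-symmetry condition \eqref{SS} for the pair $(\tilde R, A)$.

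With $\tilde R$ in hand, let $\tilde Z = \SRBM^d(\tilde R, 0, A)$. By Lemma~\ref{first}, $\tilde Z$ a.s.\ avoids the non-smooth parts of $\pa S$. Next I would invoke the stochastic comparison of Proposition~\ref{compRR'} for the pair $\tilde R \le R$ with common $A$ and $\mu = 0$, coupling the two processes on one probability space so that the skew-symmetric process lies pathwise below $Z$, that is $\tilde Z(t) \le Z(t)$ for all $t$. The transfer of avoidance is then immediate from nonnegativity: if $Z$ hit a non-smooth part at some $t$, so that $Z_i(t) = Z_j(t) = 0$ for some $i \ne j$, then $0 \le \tilde Z_i(t) \le Z_i(t) = 0$ and likewise for $j$, forcing $\tilde Z_i(t) = \tilde Z_j(t) = 0$, which contradicts the a.s.\ avoidance of $\tilde Z$. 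Hence $Z$ a.s.\ avoids the non-smooth parts of $\pa S$, which is Theorem~\ref{SRBMnonsmooth}(i).

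Two points require care and are the main obstacles. The first is verifying that the lowered matrix $\tilde R$ is still a \emph{nonsingular} $\CM$-matrix (completely-$\CS$), so that $\tilde Z$ exists, is pathwise unique, and Lemma~\ref{first} applies; making off-diagonal entries more negative can in principle enlarge the spectral radius of $I_d - \tilde R$ past $1$, so one must check this does not happen, exploiting if necessary the freedom in choosing $F$ along the one-parameter family solving $f_{ij}a_{jj} + f_{ji}a_{ii} = 2E_{ij}$ for each pair. The second is pinning down the correct orientation in Proposition~\ref{compRR'} so that the comparison indeed yields $\tilde Z \le Z$ rather than the opposite inequality; only this direction lets nonnegativity push the hitting of $Z$ down to the skew-symmetric process $\tilde Z$.
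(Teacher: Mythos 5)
Your proposal follows essentially the same route as the paper: lower the off-diagonal entries of $R$ to a $\CZ$-matrix $\tilde R$ satisfying the skew-symmetry condition~\eqref{SS}, then transfer avoidance from $\tilde Z$ to $Z$ via the comparison $\tilde Z \le Z$ (the paper uses exactly this orientation, and your nonnegativity argument for the transfer is the same as its $Z_i + Z_j \ge \tilde Z_i + \tilde Z_j > 0$). Your symmetric choice $f_{ij} = 2E_{ij}/(a_{ii}+a_{jj})$ is a harmless variant of the paper's asymmetric one, which keeps $\tilde r_{ji} = r_{ji}$ and puts all the slack into $\tilde r_{ij}$ for $i<j$. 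The one step you flag but leave open — that $\tilde R$ remains a nonsingular $\CM$-matrix — does not actually require any freedom in the choice of $F$: the paper closes it by citing the Horn--Johnson characterization (Theorem 2.5.3, comparing conditions 12 and 16), namely that a $\CZ$-matrix $M$ admitting a positive diagonal $D$ with $MD + DM^{T}$ positive definite is a nonsingular $\CM$-matrix; since your $\tilde R$ satisfies $\tilde R D + D\tilde R^{T} = 2A \succ 0$ with $D = \diag(A)$, this applies verbatim and no spectral-radius check is needed.
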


\begin{proof} Let us find another reflection nonsingular $\CM$-matrix $\tilde{R} = (\tilde{r}_{ij})_{1 \le i, j \le d}$ such that $R \ge \tilde{R}$, and the skew-symmetry condition~\eqref{SS} is true for an $\SRBM^d(\tilde{R}, 0, A)$. We need:
\begin{equation}
\label{SStilde}
\tilde{r}_{ij}a_{jj} + \tilde{r}_{ji}a_{ii} = 2a_{ij},\ i, j = 1, \ldots, d.
\end{equation}
Let $\tilde{r}_{ij} = 1$ for $i = j$. Then~\eqref{SStilde} is true for $i = j$. Let 
$$
\tilde{r}_{ij} = \frac1{a_{jj}}\left[2a_{ij} - r_{ji}a_{ii}\right],\ \tilde{r}_{ji} = r_{ji},\ \ 1 \le i < j \le d.
$$
This is well defined, since $a_{jj} > 0$ (because the matrix $A$ is positive definite). Also, $\tilde{r}_{ij} \le r_{ij}$, because $r_{ij}a_{jj} + r_{ji}a_{ii} \ge 2a_{ij}$. Since $\tilde{r}_{ij} \le r_{ij} \le 0$ for $i \ne j$, $\tilde{R}$ is a $\CZ$-matrix, so condition~\eqref{SStilde} holds. Therefore, by \cite[Theorem 2.5]{HornBook} (compare conditions 12 and 16), $\tilde{R}$ is a nonsingular $\CM$-matrix. Consider two processes $Z = \SRBM^d(R, \mu, A),\ \tilde{Z} = \SRBM^d(\tilde{R}, \mu, A)$, starting from the same initial condition $x \in S\setminus \pa S$. Then we have: $R$ and $\tilde{R}$ are $d\times d$ reflection nonsingular $\CM$-matrices, and $R \ge \tilde{R}$. By Proposition~\ref{compRR'}, we have: $\tilde{Z}$ is stochastically smaller than $Z$. By \cite[Theorem 5]{Kamae1977}, we can claim that a.s. for all $t > 0$ we have: $\tilde{Z}(t) \le Z(t)$ (possibly after changing the probability space). By Lemma~\ref{first}, the process $\tilde{Z}$ a.s. does not hit non-smooth parts of the boundary at any time $t > 0$. In other words, for every $1 \le i < j \le d$, we have: a.s. $\tilde{Z}_i(t) + \tilde{Z}_j(t) > 0$ for all $t > 0$. Therefore, a.s. $Z_i(t) + Z_j(t) > 0$ for all $t > 0$. Thus, with probability one the process $Z$ does not hit non-smooth parts of the boundary at any time $t > 0$. 
\end{proof}

Now, let us prove part (ii) of Theorem~\ref{SRBMnonsmooth}. We start with the case $d = 2$, then move to the general case. 

\begin{lemma}  Suppose we start an SRBM in two dimensions from a point $x \in S\setminus\pa S$ in the interior of $S$. Then the statement of Theorem~\ref{SRBMnonsmooth} (ii) is valid. 
\label{auxillary}
\end{lemma}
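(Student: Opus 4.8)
The plan is to transport the problem to a two-dimensional wedge via the linear map \eqref{transform} and then apply the Varadhan--Williams dichotomy recorded in Proposition~\ref{wedge}. By Lemma~\ref{girsanov} I may assume $\mu = 0$, and by hypothesis $Z$ starts in the interior $S\setminus\pa S$. Setting $\ol{Z} := A^{-1/2}Z$, the lemma establishing \eqref{xi}--\eqref{ta2} shows $\ol Z$ is an SRBM in a wedge $\CV$ with zero drift, identity covariance, and reflection angles $\theta_1,\theta_2 \in (-\pi/2,\pi/2)$ given by \eqref{ta1}--\eqref{ta2}. Since $A^{-1/2}$ is invertible, the only non-smooth point $S_1\cap S_2 = \{0\}$ is hit by $Z$ exactly when $\ol Z$ reaches the corner $\{0\}$ of $\CV$. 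By Proposition~\ref{wedge}(i) the corner is reached with positive probability (in fact a.s.) precisely when $\theta_1 + \theta_2 > 0$. So everything reduces to showing that the failure of \eqref{SSineq} for the single pair $(1,2)$, namely $r_{12}a_{22} + r_{21}a_{11} < 2a_{12}$, is equivalent to $\theta_1 + \theta_2 > 0$.

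The first step is trigonometric. As $\theta_1,\theta_2 \in (-\pi/2,\pi/2)$, the identity $\sin\theta_1 + \sin\theta_2 = 2\sin\frac{\theta_1+\theta_2}{2}\cos\frac{\theta_1-\theta_2}{2}$ together with $\cos\frac{\theta_1-\theta_2}{2} > 0$ gives $\sign(\theta_1+\theta_2) = \sign(\sin\theta_1 + \sin\theta_2)$. Writing $N_1 := a_{12} - a_{11}r_{21}$, $N_2 := a_{12} - a_{22}r_{12}$ for the numerators in \eqref{ta1}--\eqref{ta2} and $D_1, D_2 > 0$ for their denominators (each a square root of a positive definite quadratic form, hence strictly positive), the target becomes $\sign(N_1 D_2 + N_2 D_1) = \sign(N_1 + N_2)$, where $N_1 + N_2 = 2a_{12} - a_{11}r_{21} - a_{22}r_{12}$ is exactly the failure margin of \eqref{SSineq}. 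I expect this sign identity to be the crux: the two quantities carry different positive denominators, so for unconstrained $N_i, D_i$ they need not agree in sign, and one must exploit the special dependence of the $\theta_i$ on the entries of $R$ and $A$.

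I would resolve this by a one-variable monotonicity argument in $r_{12}$, holding $A$ and $r_{21}$ fixed. The quadratic $a_{22}r_{12}^2 - 2a_{12}r_{12} + a_{11}$ under the radical in $D_2$ has discriminant $-4\det A < 0$, hence stays positive, so \eqref{ta2} is valid for all real $r_{12}$ with $\theta_2 \in (-\pi/2,\pi/2)$. Now $N_1 + N_2$ is strictly decreasing in $r_{12}$ with constant slope $-a_{22} < 0$, while differentiating \eqref{ta2} yields $\frac{\md}{\md r_{12}}\sin\theta_2 = -\det A\, a_{22}^{-1/2}\left(a_{22}r_{12}^2 - 2a_{12}r_{12} + a_{11}\right)^{-3/2} < 0$, so $\theta_2$, and hence $\theta_1 + \theta_2$ (as $\theta_1$ does not involve $r_{12}$), is strictly decreasing in $r_{12}$ as well. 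At the value $r_{12}^{*} = (2a_{12} - a_{11}r_{21})/a_{22}$ one has $N_1 + N_2 = 0$, which is the skew-symmetry condition \eqref{SS}; by Lemma~\ref{equiv} this makes the wedge skew-symmetric, forcing $\theta_1 + \theta_2 = 0$ (equivalently, direct substitution gives $N_2 = -N_1$ and $D_1 = D_2$, hence $\sin\theta_2 = -\sin\theta_1$). Two continuous, strictly decreasing functions of $r_{12}$ vanishing at the same point share a sign throughout, so $N_1 + N_2 > 0 \iff \theta_1 + \theta_2 > 0$. Combining with the previous step, failure of \eqref{SSineq} gives $\theta_1 + \theta_2 > 0$, whence $\ol Z$ and therefore $Z$ reach the origin with positive probability; this is Theorem~\ref{SRBMnonsmooth}(ii) for $d = 2$.
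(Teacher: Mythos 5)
Your proof is correct, and the overall reduction is the one the paper uses: pass to the wedge via $\ol Z = A^{-1/2}Z$, invoke Proposition~\ref{wedge}, and reduce everything to showing that failure of \eqref{SSineq} for the pair $(1,2)$ is equivalent to $\ta_1+\ta_2>0$. Where you diverge is in how that last algebraic equivalence is verified. The paper normalizes the variables, setting $r'_{12} = a_{11}^{-1/2}a_{22}^{1/2}r_{12}$, $r'_{21} = a_{11}^{1/2}a_{22}^{-1/2}r_{21}$, $\rho = a_{12}/\sqrt{a_{11}a_{22}}$, and observes that $\sin\ta_1+\sin\ta_2 = -f(r'_{12}-\rho)-f(r'_{21}-\rho)$ with $f(x)=x/\sqrt{x^2+1-\rho^2}$ odd and strictly increasing; the elementary fact that $f(a)+f(b)$ and $a+b$ have the same sign for such $f$ then closes the argument in one line. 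You instead fix $A$ and $r_{21}$, treat everything as a function of $r_{12}$, check that both $N_1+N_2$ and $\ta_1+\ta_2$ are strictly decreasing in $r_{12}$ (your derivative computation $\frac{\md}{\md r_{12}}\sin\ta_2 = -\det A\, a_{22}^{-1/2}(a_{22}r_{12}^2-2a_{12}r_{12}+a_{11})^{-3/2}<0$ is correct), and anchor both at the skew-symmetric value $r_{12}^{*}$ where a direct computation gives $N_2=-N_1$ and $D_1=D_2$, hence $\ta_1+\ta_2=0$. Both arguments are sound; the paper's is shorter and symmetric in the two indices, while yours makes visible that the skew-symmetry condition \eqref{SS} is exactly the transition point of the dichotomy and requires no clever change of variables. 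One small point worth making explicit in your write-up: the intermediate values of $r_{12}$ you sweep through need not yield reflection nonsingular $\CM$-matrices, but this is harmless because the comparison is purely between the two closed-form expressions, not between actual SRBMs.
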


\begin{proof} Let $Z = (Z(t), t \ge 0) = \SRBM^2(R, 0, A)$. After the linear transformation~\eqref{transform}, we get the process $\ol{Z} = (\ol{Z}(t), t \ge 0)$ from~\eqref{transform}, which is an SRBM in a wedge. If we show that $\ta_1 + \ta_2 > 0$, then by Lemma~\ref{wedge} we have: a.s. there exists $t > 0$ such that $\ol{Z}(t) \equiv A^{-1/2}Z(t) = 0$; therefore, a.s. there exists $t > 0$ such that $Z(t) = 0$. But the angles $\ta_1$, $\ta_2$ are given in the equations~\eqref{ta1} and~\eqref{ta2}. Since $\ta_1, \ta_2 \in (-\pi/2, \pi/2)$, we have:
$$
\ta_1 + \ta_2 > 0\ \ \Lra\ \ \sin\ta_1 + \sin\ta_2 > 0,
$$
which can be written as
\begin{equation}
\label{7831}
\frac{a_{11}r_{21} - a_{12}}{\sqrt{a_{11}\left(a_{11}r_{21}^2 - 2a_{12}r_{21} + a_{22}\right)}} + \frac{a_{22}r_{12} - a_{12}}{\sqrt{a_{22}\left(a_{22}r_{12}^2 - 2a_{12}r_{12} + a_{11}\right)}} < 0.
\end{equation}
Then we have:
$$
r'_{12} := a_{11}^{-1/2}a_{22}^{1/2}r_{12},\ \ r'_{21} = a_{11}^{1/2}a_{22}^{-1/2}r_{21},\ \ \rho := a_{11}^{-1/2}a_{22}^{-1/2}a_{12}.
$$
We can rewrite the condition~\eqref{7831} as
$$
\frac{r'_{12} - \rho}{\sqrt{(r'_{12})^2 - 2\rho r'_{12} + 1}} + 
\frac{r'_{21} - \rho}{\sqrt{(r'_{21})^2 - 2\rho r'_{21} + 1}} < 0.
$$
Or, equivalently, $f(r'_{12} - \rho) + f(r'_{21} - \rho) < 0$, where
$$
f(x) := \frac{x}{\sqrt{x^2 + 1 - \rho^2}}.
$$
Note that the matrix $A$ is positive definite, so $\det A = a_{11}a_{22} - a_{12}^2  > 0$. Therefore, $\rho^2 < 1$. It is easy to show that the function $f$ is strictly increasing on $\BR$. In addition, this function is odd: $f(x) + f(-x) \equiv 0$. Therefore, $f(r'_{12} - \rho) + f(r'_{21} - \rho) < 0$ is equivalent to 
$$
(r'_{12} - \rho) + (r'_{21} - \rho) < 0\ \ \Lra\ \ r_{12}a_{22} + r_{21}a_{11} < 2a_{12}.
$$
\end{proof}

\begin{lemma} The statement (ii) of Theorem~\ref{SRBMnonsmooth} is valid in the case of general dimension, if we start an SRBM from a point $x \in S\setminus\pa S$ in the interior of $S$.
\end{lemma}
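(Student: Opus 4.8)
The plan is to reduce the general-$d$ case to the two-dimensional case already settled in Lemma~\ref{auxillary}, using the dimension-reduction comparison of Corollary~\ref{cutting}. As at the start of this subsection we may assume $\mu = 0$ and that the SRBM starts from an interior point $x \in S\setminus\pa S$. Suppose~\eqref{SSineq} is violated for some pair $1 \le i < j \le d$, and set $I = \{i, j\}$. I would introduce the two-dimensional process $\ol{Z} = \SRBM^2([R]_I, 0, [A]_I)$ built from the corresponding principal submatrices, started from the interior point $[Z(0)]_I = (x_i, x_j)'$, and then transport its corner-hitting to the coordinates $Z_i, Z_j$ of the original process.

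First I would check that Lemma~\ref{auxillary} applies to $\ol{Z}$. The matrix $[A]_I$ is positive definite as a principal submatrix of the positive definite matrix $A$, and $[R]_I$ is again a reflection nonsingular $\CM$-matrix: it is a $\CZ$-matrix because $R$ is, and it is completely-$\CS$ because each of its principal submatrices is also a principal submatrix of $R$, hence an $\CS$-matrix. Identifying the first and second coordinates of $\ol{Z}$ with the $i$th and $j$th coordinates of $Z$, the two-dimensional hitting criterion obtained at the end of the proof of Lemma~\ref{auxillary}, namely $r_{ij}a_{jj} + r_{ji}a_{ii} < 2a_{ij}$, is exactly the failure of~\eqref{SSineq} at the pair $(i,j)$. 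Hence Lemma~\ref{auxillary} yields that $\ol{Z}$ a.s.\ hits the corner: almost surely there is a time $t > 0$ with $\ol{Z}_1(t) = \ol{Z}_2(t) = 0$.

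Next I would transfer this to $Z$ by comparison. By Corollary~\ref{cutting} we have $[Z]_I \preceq \ol{Z}$, and since both processes are Markov, \cite[Theorem 5]{Kamae1977} lets me realize them on a common probability space so that, almost surely, $Z_i(t) \le \ol{Z}_1(t)$ and $Z_j(t) \le \ol{Z}_2(t)$ for all $t \ge 0$. Because $Z$ takes values in the orthant, $Z_i, Z_j \ge 0$ throughout, so at any time when $\ol{Z}$ is at the corner the two inequalities force $Z_i = Z_j = 0$. As $\ol{Z}$ a.s.\ reaches the corner, on this coupled space $Z$ a.s.\ hits the edge $S_i\cap S_j$. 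Since hitting $S_i\cap S_j$ is a property of the law of $Z$ alone, and the coupled copy has the same law as the original, the original process hits $S_i\cap S_j$ with positive probability, which is precisely the assertion of Theorem~\ref{SRBMnonsmooth}(ii).

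The delicate point is the interplay between the direction of stochastic domination and the nonnegativity of $Z$: the comparison gives only the upper bound $[Z]_I \le \ol{Z}$, so one cannot argue that $Z$ reaches the corner by "pushing it down"; instead one squeezes $Z_i$ and $Z_j$ between $0$ and the dominating coordinates, which themselves collapse to $0$. I must also ensure that the passage from stochastic to pathwise domination is legitimate, which is exactly where the Markov property and \cite[Theorem 5]{Kamae1977} enter, and that the coordinate identification matches the submatrix indices so that the $2\times2$ criterion coincides with the violated inequality~\eqref{SSineq}.
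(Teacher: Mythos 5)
Your proposal is correct and follows essentially the same route as the paper: restrict to the pair $I=\{i,j\}$ where \eqref{SSineq} fails, compare $[Z]_I$ with the two-dimensional $\SRBM^2([R]_I,0,[A]_I)$ via Corollary~\ref{cutting} and \cite[Theorem 5]{Kamae1977}, invoke Lemma~\ref{auxillary} for the corner-hitting of the two-dimensional process, and use $Z\ge 0$ to squeeze $Z_i,Z_j$ to zero. The extra verifications you include (that $[R]_I$ is again a reflection nonsingular $\CM$-matrix, that $[A]_I$ is positive definite, and the remark about the direction of the domination) are details the paper leaves implicit, and they are all correct.
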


\begin{proof} Let $Z = \SRBM^d(R, 0, A)$. Assume now that the condition~\eqref{SSineq} is not true, and for some $1 \le i < j \le d$ we have: 
\begin{equation}
\label{violation}
r_{ij}a_{jj} + r_{ji}a_{ii} < 2a_{ij}.
\end{equation}
Consider the following two-dimensional SRBM: $\tilde{Z} = \SRBM^2([R]_I, 0, [A]_I)$, where $I = \{i, j\}$. Applying Corollary~\ref{cutting} to $I := \{i, j\}$, we get:  $[Z]_I \preceq \tilde{Z}$. By \cite[Theorem 5]{Kamae1977}, we can switch from stochastic comparison to pathwise comparison: after changing the probability space, we can claim that a.s. for all $t > 0$ we have: $[Z(t)]_I \le \tilde{Z}(t)$. By  Lemma~\ref{auxillary}, with positive probability, there exists $t > 0$ such that $\tilde{Z}_i(t) = \tilde{Z}_j(t) = 0$. Therefore, with positive probability there exists $t > 0$ such that $Z_i(t) = Z_j(t) = 0$. 
\end{proof}

\section{Proof of Theorems~\ref{classicalthm} and~\ref{thmasymm}}

Theorem~\ref{thmasymm} can be easily deduced from  Theorem~\ref{SRBMnonsmooth}. First, let us prove part (i) of Theorem~\ref{thmasymm}. We need to rewrite the condition~\eqref{SSineq} for concrete matrices $R$ and $A$ arising from competing Brownian particles, given by~\eqref{R} and~\eqref{A}. Take $i, j = 1, \ldots, N-1$ and consider the condition
\begin{equation}
\label{m}
r_{ij}a_{jj} + r_{ji}a_{ii} \ge 2a_{ij}.
\end{equation}
If $i = j$, then~\eqref{m} is always true, because for such $i$, $j$ we have: $r_{ij} = r_{ji} = 1$, and $a_{ii} = a_{ij} = a_{jj} = \si_i^2 + \si_{i+1}^2$. If $|i - j| \ge 2$, then~\eqref{m} is also always true, since $r_{ij} = r_{ji} = a_{ij} = 0$. 
Since the left-hand side and the right-hand side of~\eqref{m} remain the same if we swap $i$ and $j$, we need only to check this condition for $j = k$, $i = k-1$, where $k = 2, \ldots, N-1$. We get:
$$
r_{ij} = -q^-_{k},\ \ r_{ji} = -q^+_{k},\ a_{jj} = \si_{k}^2 + \si_{k+1}^2,\ a_{ii} = \si_{k-1}^2 + \si_{k}^2,\ a_{ij} = -\si_{k}^2.
$$
Therefore, the condition~\eqref{m} takes the form
$$
-q^-_{k}\left(\si_{k}^2 + \si_{k+1}^2\right) - q^+_{k}\left(\si_{k-1}^2 + \si_{k}^2\right) \ge -2\si_{k}^2.
$$
This is equivalent to
\begin{equation}
\label{nextn}
\left(2 - q^-_{k} - q^+_{k}\right)\si_{k}^2 \ge q^-_{k}\si_{k+1}^2 + q^+_{k}\si_{k-1}^2.
\end{equation}
Note that $q^-_{k} + q^+_{k+1} = 1$ and $q^+_{k} + q^-_{k-1} = 1$. Therefore, we can rewrite~\eqref{nextn} as in~\eqref{conditionasymm}. This proves part (i) of Theorem~\ref{thmasymm}. Now, let us prove part (ii) of this theorem. Since the condition~\eqref{SSineq} is automatically valid for $i = j$ and for $|i-j| \ge 2$, it can be violated only for 
$i = j - 1$. Suppose it does not hold for $j = k$ and $i = k - 1$, where $k = 2, \ldots, N-1$ is some index. Then with positive probability, there exists $t > 0$ such that
$$
Z_{k-1}(t) = Z_k(t) = 0,
$$
which can be written as
$$
Y_{k-1}(t) = Y_k(t) = Y_{k+1}(t).
$$
This means that with positive probability, there is a triple collision between particles with ranks $k-1$, $k$ and $k+1$. This completes the proof of Theorem~\ref{thmasymm}. 

Theorem~\ref{classicalthm} is simply a corollary of Theorem~\ref{thmasymm}: just plug parameters of collision $q^{\pm}_k = 1/2$, $k = 1, \ldots, N$ into the inequality~\eqref{conditionasymm}.

\begin{rmk} Let us explain the meaning of Corollary~\ref{interesting} informally. Consider the gap process of a system of competing Brownian particles from Definition~\ref{asymmdefn}. This is an SRBM $Z = (Z(t), t \ge 0)$ in the orthant with reflection matrix $R$ and covariance matrix $A$, given by~\eqref{R} and~\eqref{A}. In this case, the condition~\eqref{SSineq} can be violated only for $i = j - 1$, because for $i = j$ and $|i - j| \ge 2$ it is automatically true. 

When $Z_i(t) = Z_j(t) = 0$ for $1 \le i < j \le d$, this corresponds to a simultaneous collision at time $t$ in this system of competing Brownian particles: $Y_{i}(t) = Y_{i+1}(t)$ and $Y_{j}(t) = Y_{j+1}(t)$. But if, in addition, we know that $i = j - 1$, then this is a particular case of a simultaneous collision: namely, a {\it triple collision} between particles with ranks $j-1$, $j$ and $j+1$. This implies that if the condition~\eqref{SSineq} does not hold, then with positive probability there occurs a simultaneous collision of a special kind: a triple collision. This is the reason why Corollary~\ref{interesting} is true. 

\end{rmk}


\section{Appendix}

\subsection{Proof of Lemma~\ref{special}}
 (i) $\Ra$ (iii). Use \cite[Theorem 2.5.3]{HornBook}. Since $R$ is completely-$\CS$, it satisfies condition 12 from this theorem. Therefore, it satisfies condition 2 from this theorem. We get the following representation: $R = \ga I_d - Q$, where $\ga := \max_{1 \le i \le d}r_{ii} = 1$, and a $d\times d$-matrix $Q$ is nonnegative with spectral radius less than one. (See the beginning of \cite[Section 2.5.4]{HornBook}.) 

(iii) $\Ra$ (ii). By \cite[Section 7.10]{MeyerBook}, we can represent $R^{-1}$ as Neumann series: 
$$
R^{-1} = I_d + Q + Q^2 + \ldots
$$
Since $Q$ is nonnegative, $R^{-1}$ is also nonnegative, and the diagonal elements of $R^{-1}$ are strictly positive (and even greater than or equal to $1$). 

(ii) $\Ra$ (i). Apply \cite[Theorem 2.5.3]{HornBook} again: condition 17 implies condition 12. Therefore, there exists $x \in \BR^d$, $x > 0$ such that $Rx > 0$, so $R$ is an $\CS$-matrix. Take a principal submatrix $\tilde{R}$ of $R$ and show that it is also an $\CS$-matrix. Let $\tilde{R} := [R]_I$, where $I \subsetneq \{1, \ldots, d\}$ is a nonempty set. Let $\tilde{x} := [x]_I$. Then $r_{ij} \le 0$ for $i \in I$ and $j \in I^c := \{1, \ldots, d\}\setminus I$, and 
$$
\bigl(\tilde{R}\tilde{x}\bigr)_i = \SL_{j\in I}r_{ij}x_j \ge \SL_{i=1}^dr_{ij}x_j = (Rx)_i > 0,\ \ i \in I.
$$
Therefore, $\tilde{x} > 0$ and $\tilde{R}\tilde{x} > 0$. So every principal submatrix of $R$ is an $\CS$-matrix, which proves that the matrix $R$ is completely-$\CS$.

\subsection{Proof of Lemma~\ref{submart}} 
Recall that the process $Z = (Z(t), t \ge 0)$ which is an $\SRBM^d(\CP, R, \mu, A)$ can be represented as $Z(s) = B(t) + RL(t)$. Here, $B = (B(t), t \ge 0)$ is a $d$-dimensional Brownian motion with drift vector $\mu$ and covariance matrix $A = (a_{ij})_{1 \le i, j \le d}$; $R = (r_{ij})$ is an $m\times d$-matrix, and $L = (L_1,\ldots, L_m)'$, where each $L_i$ is nondecreasing. Therefore, the mutual variation of the components of $Z$ is calculated as follows: $\langle Z_i, Z_j\rangle_t = a_{ij}t$, for $i, j = 1, \ldots, d$. The process $(B_i(s) - \mu_is, s \ge 0)$ is a one-dimensional driftless Brownian motion.
Since $f \in C^2_c(\CP)$, the following process is a martingale:
$$
M(t) = \SL_{i=1}^d\int_0^t\frac{\pa f}{\pa x_i}(Z(s))\md (B_i(s) - \mu_is).
$$
Apply the It\^o-Tanaka formula to $f(Z(t))$: 
\begin{align*}
f(Z(t)) - f(Z(0)) = &\SL_{i=1}^d\int_0^t\frac{\pa f}{\pa x_i}(Z(s))\md Z(s) + \frac12\SL_{i=1}^d\SL_{j=1}^d\int_0^t\frac{\pa^2f}{\pa x_i\pa x_j}(Z(s))\md\langle Z_i, Z_j\rangle_s \\ & = 
\SL_{i=1}^d\int_0^t\frac{\pa f}{\pa x_i}(Z(s))\md\left(B_i(s) - \mu_is\right) + \SL_{i=1}^d\int_0^t\frac{\pa f}{\pa x_i}(Z(s))\mu_i\md s
\\ & + \frac12\SL_{i=1}^d\SL_{j=1}^da_{ij}\int_0^t\frac{\pa^2f}{\pa x_i\pa x_j}(Z(s))\md s + \SL_{i=1}^d\int_0^t\frac{\pa f}{\pa x_i}(Z(s))\md\left[\SL_{j=1}^mr_{ij}L_j(s)\right] \\ & = M(t) + \int_0^t\CL f(Z(s))\md s + \SL_{i=1}^d\SL_{j=1}^m\int_0^tr_{ij}\frac{\pa f}{\pa x_i}(Z(s))\md L_j(s) \\ & = M(t) + \int_0^t\CL f(Z(s))\md s + \SL_{j=1}^m\int_0^tv_j\cdot \nabla f(Z(s))\md L_j(s).
\end{align*}
The third term in the last sum is nondecreasing. Indeed, for each $j = 1, \ldots, m$, the process $L_j$ is nondecreasing, and it can increase only when $Z(s) \in \CP_j$. But in this case, $v_j\cdot\nabla f(Z(s)) \ge 0$. The rest is trivial.

\section*{Acknoweldgements}

I would like to thank \textsc{Ioannis Karatzas}, \textsc{Soumik Pal} and \textsc{Ruth Williams}, as well as an anonymous referee, for help and useful discussion. This research was partially supported by  NSF grants DMS 1007563, DMS 1308340, and DMS 1405210.


\bibliographystyle{plain}

\bibliography{aggregated}

\end{document}